\DeclarePairedDelimiter\floor{\lfloor}{\rfloor}
\def\cU{\mathcal U}
\def\cA{\mathcal A}
\newcommand{\lan}{\langle}
\newcommand{\ran}{\rangle}
\newcommand{\norm}[1]{\| #1\|}
\def\cA{          \mathcal A}
\def\cB{          \mathcal B}
\def\cC{          \mathcal C}
\def\cD{          \mathcal D}
\def\cH{          \mathcal H}
\def\clr{   \color{red}}
\def\clb{   \color{black}}
\let\cal\mathcal
\def \R{{\mathbb R}}
\def \Z{{\mathbb Z}}
\def \N{{\mathbb N}}
\def \C{{\mathbb C}}
\def \M{{\mathbb M}}
\def \H{{\mathbb H}}
\def \S{{\mathbb S}}
\newcommand{\T}{{\mathbb T}}
\newcommand{\prf}{{\begin{proof}}}
\newcommand{\epf}{{\end{proof}}}
\newcommand{\Q}{{\mathbb Q}}
\newtheorem{prop}{\sc Proposition}
\newtheorem{lemma}{\sc lemma}
\newtheorem{cor}{\sc corollary}
\theoremstyle{definition}
\def\bee{\begin{equation}}
\def\eee{\end{equation}}
\theoremstyle{rema}
\newtheorem{rema}{\sc Remark}
\newcommand{\pdvr}[2]
{\dfrac{\partial^{#2} #1}{\partial \theta^{#2_1} \partial r^{#2_2}}}
\newcommand{\pdvrs}[2]
{\partial^{#2} #1 /\partial \theta^{#2_1} \partial r^{#2_2}}
\newtheorem{thm}{Theorem}
\newtheorem{definition}{Definition}
\numberwithin{equation}{section}
\begin{document}

\setlength{\columnsep}{5pt}
\title[Ballistic Transport and Absolute Continuity]{Ballistic Transport and Absolute Continuity of One-Frequency Schr\"{o}dinger Operators}

\author{Zhiyuan Zhang, \;  Zhiyan Zhao }


\date{}
\maketitle

\tableofcontents

\begin{abstract}
For the solution $u(t)$ to the discrete Schr\"odinger equation
$${\rm i}\frac{d}{dt}u_n(t)=-(u_{n+1}(t)+u_{n-1}(t))+V(\theta + n\alpha)u_n(t), \quad n\in\Z,$$
with $\alpha\in\R\setminus\Q$ and $V\in C^\omega(\T,\R)$, we consider the
growth rate with $t$ of its diffusion norm $\langle u(t)\rangle_{p}:=\left(\sum_{n\in\Z}(n^{p}+1) |u_n(t)|^2\right)^\frac12$,
and the (non-averaged) transport exponents
$$\beta_u^{+}(p) := \limsup_{t \to \infty} \frac{2\log \langle u(t)\rangle_{p}}{p\log t}, \quad \beta_u^{-}(p):=  \liminf_{t \to \infty} \frac{2\log \langle u(t)\rangle_{p}}{p\log t}.$$
We will show that, if the corresponding Schr\"odinger operator has purely absolutely continuous spectrum, then $\beta_{u}^{\pm}(p)=1$, provided that $u(0)$ is well localized.
\end{abstract}

\section{Introduction and main result}
\noindent

For the discrete quasi-periodic Schr\"odinger operator
$$(L_\theta u)_n=-(u_{n+1}+u_{n-1})+V(\theta + n\alpha)u_n, \quad n\in\Z,$$
with $\alpha\in\R\setminus\Q$ the frequency and $V$ the potential function  on $\T:=\R/\Z$, we consider the dynamics of the equation
\begin{equation}\label{QP_Eq_Sch}
{\rm i}\frac{d}{dt}u(t)=L_{\theta}u(t).
\end{equation}
For its solution $u(t)$, we want to observe the growth rate with $t$ of the ``diffusion norm"(or in some context also known as ``2nd-moment of the position"):
$$\langle u(t)\rangle_{2}:=\left(\sum_{n\in\Z}(n^2+1) |u_n(t)|^2\right)^\frac12.$$
More generally, we can define the $p^{\rm th}-$moment of $u(t)$ for any $p \geq 0$ by
 $$\langle u(t)\rangle_{p}:=\left(\sum_{n\in\Z}(|n|^{p}+1) |u_n(t)|^2\right)^\frac12.$$
We define the subspace
${\cal W}^{p}(\Z) := \{ u \in \ell^{2}(\Z) : \langle u \rangle_p < \infty \}$.

It is known that when the initial condition $u(0)$ is well-localised, we have $\langle u(t)\rangle_{p}<\infty$ for any finite $t$ and $p \geq 0$ (see Theorem 2.22 of \cite{DamanikTch}).
One standard way to describe the propagation of $u(t)$ in space is to consider the asymptotic growth of the $p^{\rm th}-$moment norm. This is stated in terms of transport exponents
$$\beta^{+}_{u}(p) := \limsup_{t \to \infty} \frac{2\log \langle u(t)\rangle_{p}}{p\log t}, \quad \beta^{-}_{u}(p):=  \liminf_{t \to \infty} \frac{2\log \langle u(t)\rangle_{p}}{p\log t}.$$

There is an extensive study of transport exponents and the time-averaged variants. It is understood that the transport behaviour is intimately related to the spectral properties of the operator. In the case where $L_\theta$ has only pure point spectrum, Simon\cite{Simon1990} showed that  for compactly supported $u(0)$,
$$
\lim_{t \to \infty} t^{-1} \langle u(t) \rangle_{2} = 0.
$$
On the other hand, the absolutely continuous(from now on, a.c. for short) part of the spectrum is expected to be associated with the strongest transport property, which is usually called ballistic transport.
More precisely, for the solution $u(t)$ to Eq. (\ref{QP_Eq_Sch}), it is expected that the norm $\langle u(t)\rangle_{p}$ grows like  $t^{p/2}$.
 As a general result in this direction, there is a time-averaged statement by Guarneri-Combes-Last theorem\cite{Last}, which shows that, in the presence of a.c. spectrum, for any well-localised $u \neq 0$ in the subspace of $\ell^2(\Z)$ corresponding to the a.c. spectrum, there exists some positive constant $C$ such that
$$
\liminf_{t\to\infty}\frac1T\int_0^T \langle u(t)\rangle_{p}^2 \, dt\geq C T^{p}.
$$
There is also a large body of literature devoted to the study of transport property for operators with singular continuous spectrum. We will refer the readers to survey \cite{DamanikTch} and introductions in \cite{DamanikTch2} for details. For more descriptions of the diffusion norm of nonlinear operators, we refer to \cite{BW}.

The main subject of this paper is to investigate the transport property of operators with a.c. spectrum. It is fairly natural to ask whether one can go beyond the averaged version in Guarneri-Combes-Last theorem. It turns out that this is not a simple generalisation, possibly due to lack of good spectral quantity associated to terms like $\langle u(t) \rangle_{p}$.  Recently, Damanik-Lukic-Yessen\cite{DamanikLY} have shown the stronger version of ballistic motion for $p=2$(i.e., the above inequality without time-averaging) for the periodic Schr\"odinger equation.
This is an extension of the work of Asch-Knauf\cite{AschKnauf} for Schr\"{o}dinger operators.
Zhao\cite{Zhao} has proven it for the quasi-periodic ones with small potential and Diophantine frequencies. Both results correspond to the Schr\"odinger operator with purely a.c. spectrum.

In this paper, we establish a complete link between purely a.c. spectrum and ballistic transport for (analytic) one-frequency Schr\"odinger operator.
The main result is stated as follows.

%
\begin{thm} \label{main theorem 2}
 Given $\alpha \in \R \setminus \Q$, and $V \in C^{\omega}(\T, \R)$ such that the Schr\"{o}dinger operator $L=L_{\theta}$ has purely a.c. spectrum for a.e. $\theta$. Given any $\eta > 0$ and $p\geq 0$.
 \begin{itemize}
   \item [1)] Consider the solution $u(t)$ of Eq. (\ref{QP_Eq_Sch}) with $u(0) \in {\cal W}^{p'}(\Z) \setminus \{0\}$, where $p'=p$ if $p \in 2\Z$ and $p'= 2\lfloor\frac{p}{2}\rfloor +2$ otherwise. For every $\theta\in\T$, we have
$$\lim_{t \to \infty}\frac{\langle u(t)\rangle_{p}^2}{t^{p+\eta}}=0.$$
   \item [2)] If $p \geq 2$ or $p=0$ and $u(0) \in \cal{W}^{p}(\Z) \setminus \{0\}$, or if $0 < p < 2$ and $u(0) \in {\cal W}^{4}(\Z) \setminus \{0\}$, then for a.e. $\theta\in\T$, we have
$$ \lim_{t \to \infty}\frac{\langle u(t)\rangle_{p}^2}{t^{p-\eta}} = \infty.$$
 \end{itemize}
 In particular, for a.e. $\theta\in\T$, for any exponentially decaying $u(0)\neq0$ , we have $\beta^{+}_u(p) = \beta^{-}_u(p) = 1$.
\end{thm}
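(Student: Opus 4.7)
The final assertion about the transport exponents follows immediately from 1) and 2): any exponentially decaying $u(0)$ lies in every $\mathcal{W}^{p'}(\Z)$, so 1) gives $\beta_u^+(p)\leq 1+\eta/p$ and 2) gives $\beta_u^-(p)\geq 1-\eta/p$ for each $\eta>0$, forcing $\beta_u^\pm(p)=1$. So the task naturally splits into a soft ballistic upper bound (valid for every $\theta$) and a sharp ballistic lower bound (valid for a.e. $\theta$, where the spectral hypothesis enters in an essential way).

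\textbf{Upper bound (part 1).} The estimate in 1) is insensitive to the spectral type and should follow from a Heisenberg-picture commutator argument. Let $X$ be the position operator, $(Xu)_n=n u_n$. Since $L_\theta$ differs from the nearest-neighbour Laplacian by a bounded multiplication, $\|[L_\theta,X]\|_{\mathrm{op}}\leq 2$ uniformly in $\theta$, so $X(t):=e^{itL_\theta}X e^{-itL_\theta}$ satisfies $\|X(t)-X\|_{\mathrm{op}}\leq 2t$. Iterating commutators yields
$$\langle u(t)\rangle_{2k}^2 \leq C_k (1+t)^{2k}\langle u(0)\rangle_{2k}^2$$
for every integer $k\geq 0$. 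For non-integer $p/2$ one interpolates between the even exponents $2\lfloor p/2\rfloor$ and $2\lfloor p/2\rfloor+2$, which is precisely the reason for the hypothesis $p'=2\lfloor p/2\rfloor+2$ in the statement. Dividing by $t^{p+\eta}$ and sending $t\to\infty$ yields 1) for every $\theta\in\T$.

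\textbf{Lower bound (part 2).} Here I would deploy the global theory of one-frequency analytic Schr\"odinger cocycles. Purely a.c. spectrum for a.e. $\theta$ forces, by Avila's characterisation, that the transfer cocycle $(\alpha,A_{E-V})$ is subcritical for a.e. $E\in\Sigma$. Avila's almost reducibility theorem then upgrades subcriticality to almost reducibility, and on a full density-of-states measure subset $\Sigma_0\subseteq\Sigma$ the cocycle is actually analytically reducible to a constant rotation of angle $2\pi\rho(E)$. This furnishes Bloch-type generalised eigenfunctions
$$\psi_{E,\theta}(n) = e^{2\pi i n\rho(E)}\phi_E(\theta+n\alpha),\qquad E\in\Sigma_0,$$
with $\phi_E\in C^{\omega}(\T,\C)$ bounded uniformly on compact subsets of $\Sigma_0$. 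Expanding $u(0)$ over these Bloch waves, one gets the wavepacket representation $u(t)=\int_{\Sigma_0} e^{-itE}\hat u(E,\theta)\psi_{E,\theta}\,d\mu(E)$. Since the rotation number $\rho$ is strictly monotone on $\Sigma$, its derivative (the density of states) is non-zero Lebesgue-a.e. on $\Sigma_0$, so the Bloch waves carry a well-defined, generically non-zero group velocity. A non-stationary-phase analysis of this oscillatory integral, performed against the position weight $|n|^p$, then yields the pointwise lower bound $\langle u(t)\rangle_p^2/t^{p-\eta}\to\infty$ for a.e. $\theta$. Equivalently and more operator-theoretically, one can construct the asymptotic velocity operator $v:=\text{s-}\lim_{t\to\infty} e^{itL_\theta}(X/t)e^{-itL_\theta}$ on the a.c. subspace, use almost reducibility to show its existence and injectivity, and then $\|X^k u(t)\|\gtrsim t^k \|v^k u(0)\|$ delivers even-$p$ cases, with interpolation and the stronger integrability threshold $u(0)\in\mathcal{W}^4$ handling $0<p<2$.

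\textbf{Main obstacle.} The soft Guarneri-Combes-Last bound already yields the Ces\`aro average $\frac1T\int_0^T\langle u(t)\rangle_p^2\,dt\gtrsim T^p$; the substantive difficulty is removing the time average. This demands quantitative control of the Bloch decomposition: uniformity of $\phi_E$ in $E$, regularity of $\rho$ on $\Sigma_0$, and compatibility of these estimates with the spectral measure, especially near the boundary of $\Sigma_0$ where almost reducibility degenerates into genuinely almost-constant (but not constant) behaviour. Executing this in the full one-frequency analytic category, as opposed to the perturbative Diophantine regime of \cite{Zhao} or the periodic setting of \cite{DamanikLY,AschKnauf}, is where the heart of the argument must lie, and is the step I expect to be the main obstacle.
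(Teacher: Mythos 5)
Your upper-bound argument (part 1) is essentially the paper's Theorem \ref{ballistic_upper_general}: a Heisenberg-picture commutator bound $\|[L_\theta, X^{(m)}_N]\|$ controlled uniformly, iterated to get even moments, followed by Cauchy interpolation for non-even $p$. That part is fine.

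For the lower bound, there is a genuine gap in the key step. You assume that on a full-measure subset $\Sigma_0$ the cocycle is ``analytically reducible to a constant rotation of angle $2\pi\rho(E)$,'' giving Bloch waves of the form $\psi_{E,\theta}(n)=e^{2\pi i n\rho(E)}\phi_E(\theta+n\alpha)$. For general irrational $\alpha$ this is false: reducibility to a constant is only available in a Diophantine (or Brjuno) regime, which is precisely the situation covered by \cite{Zhao}. In the full one-frequency setting the paper must instead use the \emph{rotational} reducibility of Avila--Fayad--Krikorian, which only conjugates the cocycle to a $\theta$-dependent rotation $R_{\phi(E,\theta)}$. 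The resulting generalized eigenvectors have phases $\phi^{[n]}(E,\theta)$ (Birkhoff sums) rather than $n\rho(E)$, and these do not admit the global parametrisation needed for your non-stationary-phase or asymptotic-velocity arguments. The paper explicitly flags this as the first of two obstructions.

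A second missing ingredient is the measure. Your oscillatory-integral step requires integration by parts twice in $E$, which demands $C^2$ control of the measure against which correlations of Bloch waves at distant sites decay. On a Cantor spectrum no canonical spectral quantity (density of states, rotation number) has summable Fourier coefficients or good second derivatives. The paper circumvents this by building, via two admissible subsequences $(Q_l)$, $(R_l)$ of denominators and a truncated KAM scheme, a non-canonical smoothing measure $\psi\,dE$ supported on the spectrum whose $C^2$ norm is controlled at finite but arbitrarily long scale windows $\mathcal{M}_l$, $\tilde{\mathcal{M}}_l$. The ballistic lower \emph{exponent} (and not a linear lower bound as in \cite{Zhao}) is exactly what this finite-scale control can deliver. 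Your proposal, as written, has neither the correct form of the Bloch waves nor the construction of this measure, so the integration-by-parts/stationary-phase step you rely on cannot be carried out in the full generality of Theorem \ref{main theorem 2}.
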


\begin{rema}
In view of the work of Damanik-Lukic-Yessen for the periodic potential case (Theorem 1.6 of \cite{DamanikLY}), the conclusion of Theorem \ref{main theorem 2} holds for any $\alpha\in\R$.
\end{rema}


\begin{rema}
According to Corollary 1.7 of \cite{A}, the conclusion of \ref{main theorem 2} holds if $V$ is close to constant.
\end{rema}

Besides the intrinsic interest in this problem, another motivation can be found in connection with the so-called XY spin chain, studied in many-body quantum physics. In \cite{DamanikLY}, the authors established lower bound for the Lieb-Robinson velocity for the anisotropic XY chain on $\Z$ with periodic parameters as an application of their proof of the ballistic motions. Also through this connection with Schr\"{o}dinger operators, Kachkovskiy\cite{Kachkovskiy} has proven similar lower bound for a class of isotropic quasi-periodic XY spin, corresponding to analytic cocycles which are reducible for almost every energy, including the small analytic regime in \cite{Zhao}, and the cases with purely a.c. spectrum and a single Diophantine frequency. It is explained in \cite{Kachkovskiy} that although the transport property in two models share some connections, the averaged lower bound of Guarneri-Combes-Last does not translate into useful informations for XY spin chains. We hope that the method in this paper could provide some information on how to approach XY spin chains.

\

\noindent{\bf Idea of Proof.}
Since the general ballistic upper bound is known(see Theorem \ref{ballistic_upper_general} in Section \ref{growth}), we only need to show the lower bound.

Following the main strategy of \cite{Zhao}, we relate the growth of the diffusion norm to the so-called ``modified spectral transformation".
Roughly speaking, one comes down to show that the Bloch-wave at different sites has decaying correlation with respect to some well-chosen measure.
In \cite{Zhao}, a natural candidate measure for this construction is the measure defined by the integrated density of states.
The derivative of the Floquet exponent serves as the source of the decay.
While in our case, two difficulties arises.
\begin{itemize}
  \item The first is that the usual KAM breaks down, namely in general one can no longer reduce the Schr\"odinger cocycle to constant.
Avila-Fayad-Krikorian\cite{AFK} developed a theory which allows one to reduce the cocycle to a (phase-dependent) rotation.
Through this type of reducibility, we can construct the ``generalized Bloch-wave" with ``phase-dependent Floquent exponent".
The phase-dependence complicated the matter(see Subsection 5.4).
  \item The second difficulty is that we need to exploit the second order derivatives of the modified spectral measure, while the rotation number only has good first order derivative estimates. This difficulty does not appear in the construction of \cite{Zhao}, in which the Floquent exponent is just the rotation number and the form of Bloch-wave is much simpler, due to the reducibility to constant. Then an integration by parts can be performed before exploiting the regularity of the rotation number. While in our case we can not expect to find a parametrisation to accommodate the variation of phase-dependent Floquent exponent at different sites. So we need to make a non-canonical choice of measure that comes in the construction of the modified spectral transformation.
To retain the decay of the correlations, one has to study the regularity of the measure in question.
Since one often expects the spectrum to be a Cantor set, we can not expect to choose a measure with summable sequence of Fourier coefficients. This is why we are only able to study the transport exponents but not the linear lower bound as in \cite{Zhao}.
\end{itemize}

To get the ballistic lower exponent, our main observation is: by the renormalization theory developed by Avila-Krikorian(\cite{AK} and its generalisation \cite{AK2}), we can initialize the KAM for arbitrarily fine data, which allows us to delay the occurrence of resonance at will.
We will construct a measure supported on spectrum adapted to the KAM scheme, with good regularity at finite yet arbitrarily long intervals of scales. Combining with a truncated version of modified spectral transform, we can complete the proof.


%
%
%

\section{Preliminaries and notations}

\subsection{Schr\"{o}dinger operator and Schr\"{o}dinger cocycle}\label{pre_op_cocycle}

We recall some basic notions and well-known results for the quasi-periodic Schr\"{o}dinger operator
  $L=L_\theta:\ell^2(\Z)\rightarrow \ell^2(\Z)$,
$$
(L u)_n=-(u_{n+1}+u_{n-1})+V(\theta+n\alpha) u_n,
$$
with $\alpha\in\R\setminus\Q$, $V\in C^\omega(\T,\R)$ and the corresponding Schr\"{o}dinger cocycle $(\alpha, A_{(E,V)})$:
\begin{equation}\label{qpcocycle}
\left(\begin{array}{c}
u_{n+1} \\[1mm]
u_{n}
\end{array}
\right)=A_{(E,V)}(\theta+n\alpha)\left(\begin{array}{c}
u_{n} \\[1mm]
u_{n-1}
\end{array}
\right)  \  {\rm with}  \  A_{(E,V)}(\theta):=\left(\begin{array}{cc}
            -E+V(\theta) & -1 \\[1mm]
            1 & 0
          \end{array}
\right).
 \end{equation}
Note that $(\alpha, A_{(E,V)})$ is equivalent to the eigenvalue problem $Lu=Eu$.

\subsubsection{Spectral measure and integrated density of states}
Let $\sigma(L)$ denote the spectrum of $L$.
Fixing any phase $\theta\in\T$ and any $\psi\in\ell^2(\Z)$, let $\mu_\theta=\mu_{\theta,\psi}$ be
the spectral measure of $L = L_\theta$ corresponding to $\psi$, which is defined so that
$$\lan(L_\theta-E)^{-1}\psi, \psi \ran = \int_\R\frac{1}{E-E'}d\mu_{\theta,\psi}(E'), \quad \forall \, E\in \C\setminus\sigma(L).$$
From now on, we restrict our consideration to $\mu_\theta=\mu_{\theta, \delta_{-1}}+\mu_{\theta, \delta_0}$ and just call it the {\bf spectral measure}, where
$\{\delta_n\}_{n\in\Z}$ is the canonical basis of $\ell^2(\Z)$.
Since $\{\delta_{-1}, \delta_0\}$ forms a generating basis of $\ell^2(\Z)$\cite{CarmonaLacroix},
that is, there is no
proper subset of $\ell^2(\Z)$ which is invariant by $L$ and contains $\{\delta_{-1}, \delta_0\}$. In particular
the support of $\mu_\theta$ is $\sigma(L)$ and if $\mu_{\theta}$ is a.c. then any $\mu_{\theta,\psi}$ , $\psi\in\ell^2(\Z)$, is a.c. .

The {\bf integrated density of states} is the function $k:\R\rightarrow[0,1]$ such that
$$k(E)=\int_{\T} \mu_\theta(-\infty,E] \,  d\theta,$$
which is a continuous non-decreasing surjective function.

\subsubsection{Rotation number and Lyapunov exponent}
Related to the Schr\"odinger cocycle, a unique representation can be given for the rotation number
$\rho=\rho_{(\alpha, A_{(E,V)})}$.
Indeed, the rotation number is defined for more general quasi-periodic cocycles.
It is introduced originally by Herman\cite{Herman} in this
discrete case(see also Delyon-Souillard\cite{DelyonSouillard}, Johnson-Moser\cite{JM}, Krikorian\cite{Kri}).
For the precise definition, we follow the same presentation as in \cite{AFK}.

Given $A(\cdot):T \rightarrow SL(2,\R)$, continuous and homotopic to the identity, the same is true for
$$
\begin{array}{llll}
F:& \displaystyle  \T\times \S^1 &\rightarrow& \displaystyle  \T\times \S^1 \\[3mm]
   &(\theta,v)&\mapsto&(\theta+\alpha, \, \frac{A(\theta)v}{\|A(\theta)v\|})
\end{array}.$$
Therefore, $F$ admits a continuous lift $\tilde F:\T\times\R\rightarrow \T\times\R$ of the form $\tilde F(\theta, x)=(\theta + \alpha, x + f(\theta, x))$ such that $f(\theta, x + 1) = f(\theta, x)$ and $\Pi(x+f(\theta, x))=\frac{A(\theta)\Pi(x)}{\|A(\theta)\Pi(x)\|}$, where $\Pi:\R\rightarrow S^1$, $\Pi(x)=e^{{\rm i}2\pi x}:=(\cos2\pi x, \sin2\pi x)$.
In order to simplify the terminology, we can say that $\tilde F$ is a lift for $(\alpha,A)$.
The map $f$ is independent of the choice of the lift up to the addition of a constant integer
$p\in \Z$. Following \cite{Herman} and \cite{JM}, we define the limit
$$\lim_{n\rightarrow\pm\infty}\frac{1}{n}\sum_{k=0}^{n-1}f(\tilde F^{k}(\theta,x)),$$
which is independent of $(\theta, x)$ and where the convergence is uniform in $(\theta, x)$.
The class of this number in $\T$, which is independent of the chosen lift, is called the {\bf fibered rotation number} of $(\alpha,A)$ and denoted by $\rho_{(\alpha,A)}$.
Moreover, $\rho_{(\alpha,A)}$ is continuous as
a function of $A$(with respect to the uniform topology on $C^0(\T, SL(2,R)$), naturally
restricted to the subset of $A$ homotopic to the identity).

\

For the quasi-periodic cocycle
$\left(\begin{array}{c}
u_{n+1} \\
u_{n}
\end{array}
\right)=A(\theta+n\alpha)\left(\begin{array}{c}
u_{n} \\
u_{n-1}
\end{array}
\right)$
with $\alpha\in\R\setminus\Q$,
the {\bf Lyapunov exponent} $\gamma=\gamma_{(\alpha,A)}$ is defined by
$$\gamma_{(\alpha,\, A)}:=\lim_{n\rightarrow \infty}\frac{1}{n}\int_{\T}\ln|A(\theta+n\alpha)\cdots A(\theta+\alpha)|\, d\theta.$$
By Kingman's subadditive ergodic theorem,
$$\gamma_{(\alpha,\, A)}:=\lim_{n\rightarrow \infty}\frac{1}{n}\ln|A(\theta+n\omega)\cdots A(\theta+\omega)|.$$

In particular, for quasi-periodic Schr\"{o}dinger cocycle $(\alpha, A_{(E,V)})$ given in (\ref{qpcocycle}), a well-known result of Kotani theory shows, if the linear Schr\"{o}dinger operator $L$ has purely absolutely continuous spectrum, then $\gamma(E)=0$ a.e. on $\sigma(L)$.
Moreover,
the Thouless formula relates the Lyapunov exponent to the integrated density of states:
$$\gamma(E)=\gamma_{(\alpha, A_{(E,V)})} = \int_{\R} \ln|E'-E| \, dk(E').$$
There is also a relation between the rotation number and  the integrated density of states:
$$k(E)=\left\{ \begin{array}{cl}
                 0, & E\leq \inf\sigma(L) \\[1mm]
                \frac{\rho(E)}{\pi}, & \inf\sigma(L)<E<\sup\sigma(L) \\[1mm]
                 1, & E \geq \sup\sigma(L)
               \end{array}
 \right.  .$$
By gap-labelling theorem(see, e.g., \cite{DelyonSouillard, JM}), $k(E)=\frac{\rho(E)}{\pi}$ is constant in a gap of $\sigma(L)$(i.e., an interval in the resolvent set of $L$), and each gap is labelled with $l\in\Z$ such that $\rho=\frac{ l\alpha}{2}$ mod $\pi$ in this gap.

\

\subsubsection{The $m-$functions}

The spectral measure $\mu=\mu_\theta$ can be studied through
its Borel transform $M = M_\theta$:
$$M(z)=\int\frac{1}{E'-z}d\mu(E').$$
It maps the upper-half plane $\H:=\{z\in\C: \Im z>0\}$ into itself.

From the limit-point theory, for $z\in\H$, there are two solutions $u^{\pm}$, with $u_0^{\pm}\neq 0$, which are $\ell^2$ at $\pm\infty$ and satisfying $Lu^{\pm} = zu^{\pm}$,
defined up to normalization.
Let
$m^{\pm}:=-\frac{u^{\pm}_{\pm1}}{u^{\pm}_{0}}$.
$m^+$ and $m^-$ are Herglotz functions, i.e., they map $\H$ holomorphically into itself(see, e.g., \cite{Simon1983} for more properties of Herglotz function).
Moreover, it is well known that
$$M =\frac{m^+m^- -1}{m^+ + m^-}.$$
By the property of Herglotz function, we know that for almost every $E\in\R$,
the non-tangential limits $\lim_{\epsilon\rightarrow0}m^{\pm}(E+{\rm i}\epsilon)$ exist, and they
define measurable functions on $\R$ which we still denote $m^\pm(E)$.

We have the following key result of Kotani Theory\cite{Simon1983}.
\begin{lemma}[Theorem 2.2 of \cite{Avila}]
For every $\theta$, for a.e. $E$ such that $\gamma(E) = 0$, we have
$m^+(E)=m^-(E)$.
\end{lemma}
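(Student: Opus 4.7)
The plan is to establish this classical Kotani identity by translating the vanishing of $\gamma(E)$ into a geometric obstruction on the Poincar\'e upper half-plane $\H$ that forces the two Weyl solutions $u^{\pm}$ to coincide on the real axis. The natural framework is to work with the complex spectral parameter $z = E + i\epsilon$ for $\epsilon > 0$ and pass to the boundary limit $\epsilon \to 0^{+}$ at the very end.

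First, I would recast the $m$-functions geometrically. For $z \in \H$, the Schr\"odinger cocycle $A_{(z,V)}$ acts on $\H$ (viewed as parametrizing complex ratios $u_{-1}/u_{0}$) by M\"obius transformations which are isometries of the hyperbolic metric. The sections $\theta \mapsto -m^{+}(z, \theta)$ and $\theta \mapsto m^{-}(z, \theta)$ are the unique attracting and repelling invariant sections of this action, corresponding to the $\ell^{2}$ solutions at $\pm \infty$; both depend analytically on $z$ and are Herglotz in $z$ for fixed $\theta$.

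Second, I would derive a Kotani--Deift--Simon identity expressing $2\gamma(z)$ as an integral over $\T$ of a nonnegative quantity that vanishes precisely when the two boundary sections coincide, i.e., when $m^{+}(z, \theta) = m^{-}(z, \theta)$ in the chosen convention. Such an identity follows from conjugating $A_{(z, V)}$ to diagonal form in the basis built from $u^{\pm}$: the log of the diagonal entry yields $\gamma(z)$ modulo a Herglotz-derivative correction that integrates to zero by telescoping and by invariance of Lebesgue measure under $\theta \mapsto \theta + \alpha$. The integrand is, up to normalization, the log of a hyperbolic distance between the two invariant sections, hence strictly positive unless they collapse to a single section.

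Third, I would take the boundary limit. Since $\gamma$ is subharmonic on $\H$ and $\gamma(E + i0) = 0$ on the measurable set $S = \{\gamma = 0\} \cap \sigma(L)$, one has $\gamma(E + i\epsilon) \to 0$ as $\epsilon \to 0^{+}$ in $L^{1}_{\mathrm{loc}}$ on $S$. Herglotz theory provides non-tangential boundary values $m^{\pm}(E + i0, \theta)$ for a.e. $(E, \theta) \in \R \times \T$. Integrating the Kotani identity over $E$ in a compact subset of $S$ and applying Fatou's lemma forces the integrand to vanish for a.e. such pair, yielding the identity on the boundary.

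The main obstacle is the strengthening from ``for a.e.\ $(E, \theta)$'' to ``for every $\theta$, for a.e.\ $E$'' as stated in the lemma. I would address this by exploiting the joint continuity of $(\theta, z) \mapsto m^{\pm}(z, \theta)$ on $\T \times \H$ (inherited from the analyticity of $V$), together with the minimality of the irrational rotation $\theta \mapsto \theta + \alpha$ to propagate an a.e.-in-$\theta$ equality to every $\theta$. A careful Fubini arrangement is needed to coordinate the two a.e. statements (convergence of $\gamma(\cdot + i\epsilon)$ to $0$ and existence of the non-tangential limits $m^{\pm}(\cdot + i0, \cdot)$) in a compatible order, which is the main technical point of the argument.
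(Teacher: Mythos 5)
Note first that the paper does not prove this lemma: it is quoted as Theorem 2.2 of Avila's paper on the absolutely continuous spectrum of the almost Mathieu operator, so there is no internal proof to compare against. Your steps~1--3 correctly reproduce the skeleton of classical Kotani theory: interpret $m^{\pm}$ as invariant sections of the M\"obius action on $\H$, derive a Kotani identity expressing $\gamma(z)$ as a $\theta$-average of a nonnegative defect that vanishes iff the two sections coincide, and pass to boundary values by Fatou. That machinery delivers the identity for a.e.\ $(\theta,E)$, equivalently for a.e.\ $\theta$, for a.e.\ $E$ with $\gamma(E)=0$.

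The genuine gap is the upgrade to ``for \emph{every} $\theta$,'' which is exactly what separates Avila's Theorem~2.2 from the classical Kotani statement, and the mechanism you propose cannot achieve it. For fixed $E$ with $\gamma(E)=0$, the cocycle covariance of $m^{\pm}$ under $\theta\mapsto\theta+\alpha$ makes $\{\theta:\ m^+(E,\theta)=m^-(E,\theta)\}$ rotation-invariant, hence of measure $0$ or $1$; Kotani tells you it has full measure for a.e.\ $E$. But a full-measure rotation-invariant set can still omit a dense set of measure zero, so minimality (density of orbits) gives no way to certify membership for an arbitrary fixed $\theta_0$. Likewise, the joint continuity of $(\theta,z)\mapsto m^{\pm}(z,\theta)$ that you invoke holds only on $\T\times\H$; the non-tangential traces $m^{\pm}(E+i0,\cdot)$ have no reason to be continuous in $\theta$, so continuity cannot be transferred to the real axis. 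The pointwise-in-$\theta$ statement requires a quantitative estimate that is \emph{uniform} in $\theta$ before one lets $\Im z\to 0$ separately for each $\theta$; you correctly flag the Fubini arrangement as the main technical point, but a Fubini rearrangement plus minimality is not a substitute for such a uniform bound.
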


\subsubsection{Classical spectral transformation}

%

Let $u(E)$ and $v(E)$ be the solutions of the eigenvalue problem $Lq=Eq$ such that
$\left(
\begin{array}{cc}
u_1 & v_1 \\
u_0 & v_0 \\
\end{array}
\right)=\left(
\begin{array}{cc}
1 & 0 \\
0 & 1 \\
\end{array}
\right)
$.
We have
\begin{thm}[Chapter 9 of \cite{CL}]\label{spectral_measure_matrix}
There exists a non-decreasing Hermitian matrix $\mu=(\mu_{jk})_{j,k=1,2}$ whose elements are of bounded variation on every finite interval on $\R$, satisfying
$$\mu_{jk}(E_2)-\mu_{jk}(E_1)=\lim_{\epsilon\rightarrow 0_+}\frac{1}{\pi}\int_{E_1}^{E_2}\Im M_{jk}(\nu+i\epsilon)d\nu,$$
at points of continuity $E_1$, $E_2$ of $\mu_{jk}$, where on $\H$,
$$M=\left(\begin{array}{cc}
            M_{11} & M_{12} \\
            M_{21} & M_{22}
          \end{array}
\right):=-\frac{1}{m^{+}+m^{-}}\left(\begin{array}{cc}
                                  1 & m^+ \\
                                  -m^- & -m^+m^-
                                \end{array}
\right),$$
such that for any $q\in \ell^2(\Z)$, with
$(g_1(E),g_2(E)):=\left(\sum_{n\in\Z} q_n u_n(E), \sum_{n\in\Z} q_n v_n(E)\right)$,
we have Parseval's equality
$$\sum_{n\in\Z}|q_n|^2=\int_{\R}\sum_{j,k=1}^2 \bar g_j(E) g_k(E) d\mu_{jk}(E).$$
\end{thm}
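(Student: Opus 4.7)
The plan is to follow the classical Weyl-Titchmarsh-Kodaira construction, adapted to our bounded self-adjoint $L$ on $\ell^2(\Z)$. The key observation is that the matrix $M(z)$ in the statement coincides, up to a sign and an explicit change of basis, with the matrix of resolvent elements $\bigl(\langle \phi_j,(L-z)^{-1}\phi_k\rangle\bigr)_{j,k=1,2}$ on the cyclic pair $\{\delta_{-1},\delta_0\}$. Once this identification is made, all four conclusions will follow from matrix-valued Herglotz theory together with the spectral theorem for $L$.

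First I would derive the Green's function. For $z\in\H$, the Weyl solutions $u^\pm(z,\cdot)$ normalized by $u_0^\pm=1$ satisfy $u_{\pm 1}^\pm=-m^\pm$; a short computation shows their discrete Wronskian $W(z)$ is non-vanishing on $\H$ and the kernel
$$
G(n,k;z)=W(z)^{-1}\,u^+(z,\max(n,k))\,u^-(z,\min(n,k))
$$
represents $(L-z)^{-1}$. Evaluating $G$ at $(n,k)\in\{-1,0\}^2$ and simplifying with $u_1^+=-m^+$, $u_{-1}^-=-m^-$ matches the four resolvent matrix elements with the four entries of $M(z)$ in the statement.

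Second, to produce $\mu$: each diagonal entry $M_{jj}$ is, up to a fixed sign, a scalar Herglotz function, so the classical Herglotz representation yields non-negative locally finite measures $\mu_{jj}$ satisfying the claimed Stieltjes inversion formula. For the off-diagonal entries, polarization exploits that $M_{12}+M_{21}$ and $-i(M_{12}-M_{21})$ are themselves Herglotz (arising as diagonal resolvent elements on $\phi_1\pm\phi_2$ and $\phi_1\pm i\phi_2$), yielding complex measures $\mu_{12}=\overline{\mu_{21}}$ with the same inversion formula. Positivity of $\Im(L-z)^{-1}$ combined with Cauchy-Schwarz forces each matrix increment $\mu(E_2)-\mu(E_1)$ to be positive semi-definite Hermitian, hence non-decreasing in the matricial sense, with locally finite total variation since $L$ is bounded.

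Finally, for Parseval: take $q\in\ell^2(\Z)$ of finite support and apply the spectral theorem to get
$\|q\|^2=\lim_{\eps\to 0^+}\tfrac{1}{\pi}\int_\R \Im\langle q,(L-E-i\eps)^{-1}q\rangle \, dE$.
Expanding an arbitrary solution of $Lp=zp$ in the fundamental pair $(u(z,\cdot),v(z,\cdot))$, the Green's function becomes a bilinear form in $M_{jk}(z)$ with coefficients $\overline{g_j(z)}\,g_k(z)$, so $\langle q,(L-z)^{-1}q\rangle=\sum_{j,k}\overline{g_j(z)}\,g_k(z)\,M_{jk}(z)$; the inversion formula from Step~2 then converts the Stieltjes limit into $\int\sum_{j,k}\overline{g_j}\,g_k\, d\mu_{jk}$, and density extends the identity from finitely supported to arbitrary $q\in\ell^2(\Z)$. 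The principal technical obstacle is the algebraic bookkeeping in Steps~1 and~4 — matching the Green's function entries to $M(z)$ under the normalizations $u_1=v_0=1$, $u_0=v_1=0$, and recognizing the resulting bilinear form — after which Herglotz representation and the spectral theorem supply the remaining content in a standard way.
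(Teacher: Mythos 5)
You should first be aware that the paper offers no proof of this statement: it is quoted, with attribution, from Chapter 9 of Coddington--Levinson, where the classical argument runs through approximation by regular problems on finite intervals, Helly's selection theorem, and identification of the limiting spectral matrix via the Weyl functions. Your plan — identify $M$ with resolvent data, invoke matrix Herglotz theory and Stone's formula, and deduce Parseval from the spectral theorem plus cyclicity of a generating pair — is the standard modern alternative (it is how the analogous expansion is proved for Jacobi matrices), and in outline it can be made to work. So there is no conflict of strategy with the paper; the question is whether your write-up actually constitutes a proof.

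It does not yet, because the two places you defer as ``algebraic bookkeeping'' are exactly where the content of the theorem lives, and the versions of them you assert are incorrect as stated. (i) $M$ cannot coincide, up to a sign and a change of basis, with $\bigl(\langle \delta_a,(L-z)^{-1}\delta_b\rangle\bigr)_{a,b}$ on any pair of sites: every such resolvent matrix is complex symmetric ($G(a,b;z)=G(b,a;z)$), whereas the displayed $M$ satisfies $M_{12}-M_{21}\equiv -1$. The correct relation is additive, not a basis change: $M$ should equal a complex symmetric Herglotz matrix plus a constant antisymmetric matrix, and the constant part is precisely what Stieltjes inversion cannot see. Moreover, with the normalization $u_1=v_0=1$, $u_0=v_1=0$, the pair whose transforms are the constant vectors $(1,0)$, $(0,1)$ is $\{\delta_1,\delta_0\}$, not $\{\delta_{-1},\delta_0\}$ (the transform of $\delta_{-1}$ is $(u_{-1},v_{-1})=(-1,V(\theta)-E)$), so the symmetric part must be matched to the Green matrix at $\{0,1\}$. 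Even then the matching is delicate: with the literal definition $m^{\pm}=-u^{\pm}_{\pm1}/u^{\pm}_0$ (ratios taken at two different bonds) the displayed formula does not invert to the spectral matrix of the displayed transform — already for $V\equiv 0$ the imaginary parts of the displayed diagonal entries stay bounded at the band edges, while the true densities diverge like $(4-E^2)^{-1/2}$ — so part of the work is pinning down consistent conventions before any identification can be verified. (ii) The key identity of your last step, $\langle q,(L-z)^{-1}q\rangle=\sum_{j,k}\overline{g_j(z)}\,g_k(z)M_{jk}(z)$, is false as written: the resolvent kernel is bilinear, not sesquilinear, in the solutions at energy $z$, so the conjugated factor must be $\sum_n \bar q_n u_n(z)$ (i.e.\ $\overline{g_j(\bar z)}$, using $\overline{u_n(\bar z)}=u_n(z)$), not $\overline{g_j(z)}$; and since $G(n,m;z)$ uses $u^{-}$ at $\min(n,m)$ and $u^{+}$ at $\max(n,m)$, expanding it in the fundamental system produces, besides the symmetric bilinear form in the $M_{jk}$, an ordered term proportional to $\mathrm{sgn}(m-n)\,(u_n v_m-u_m v_n)$, which has to be shown to disappear in the $\varepsilon\to 0^{+}$ inversion (it does, being entire in $z$ and real on $\R$ — and this is also exactly why the antisymmetric constant in $M$ is harmless). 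Until these identifications are carried out correctly, the proposal is a correct strategy sketch resting on standard Herglotz and spectral-theorem inputs, but not a proof of the stated theorem.
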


Given any matrix of measures on $\R$ $d\varphi=\left(\begin{array}{cc}
                 d\varphi_{11} & d\varphi_{12} \\[1mm]
                 d\varphi_{21} & d\varphi_{22}
               \end{array}\right)$,
let ${\cal L}^2(d\varphi)$ be the space of vectors
$G=(g_j)_{j=1,2}$, with $g_j$ functions of $E\in\R$ satisfying
\begin{equation}\label{gen_L2}
\|G\|_{{\cal L}^2(d\varphi)}^2:=\sum_{j,k=1}^2 \int_\R g_j \, \bar g_k \,  d\varphi_{jk}<\infty.
\end{equation}
In view of Theorem \ref{spectral_measure_matrix}, the map
$(q_n)_{n\in\Z}\mapsto \left(\begin{array}{c}
                                                         \sum_{n\in\Z}q_n u_n(E) \\[1mm]
                                                         \sum_{n\in\Z}q_n v_n(E)
                                                       \end{array}
     \right)$
defines a unitary transformation between $\ell^2(\Z)$ and ${\cal L}^2(d\mu)$.
We call it as the {\bf classical spectral transformation}.

By Chapter $\uppercase\expandafter{\romannumeral5}$ of \cite{PF}(Page 297), we know that
the matrix of measures $(d\mu_{jk})_{j,k=1,2}$ is Hermitian-positive, and
therefore each $d\mu_{jk}$ is a.c. with respect to the measure $d\mu_{11}+d\mu_{22}$.
This measure is a.c. with respect to the above spectral measure $\mu_\theta=\mu_{\theta, e_{-1}}+\mu_{\theta, e_0}$
and it determines the spectral type of the operator.
In particular, if the spectrum of $L$ is purely a.c., we have, for any $q\in\ell^2(\Z)\setminus\{0\}$,
the classical spectral transformation is supported on a subset of $\sigma(L)$ with positive Lebesgue measure.

\

For the classical spectral transformation, there are some singularities with respect to $E$. More precisely, $u_n$ and $v_n$ are not well differentiated somewhere in the spectrum $\sigma(L)$.
For example, for the free Schr\"{o}dinger operator
$(L q)_n=-(q_{n+1}+q_{n-1})$,
we have $\sigma(L)=[-2,2]$ and for $E\in\sigma(L)$ the rotation number is $$\xi_0(E)=\rho_{(\omega, A_{(E,0)})}(E)=\cos^{-1}\left(-\frac{E}{2}\right)\in[0,\pi].$$
Since $-E=2\cos\xi_0$, we can see that the two generalized eigenvectors
\begin{equation}\label{classical_uv}
u_n=\frac{\sin n\xi_0}{\sin \xi_0}, \quad v_n=-\frac{\sin (n-1)\xi_0}{\sin \xi_0}
\end{equation}
satisfy $\left(
\begin{array}{cc}
u_1 & v_1 \\
u_0 & v_0 \\
\end{array}
\right)=\left(
\begin{array}{cc}
1 & 0 \\
0 & 1 \\
\end{array}
\right)
$
and, on $(-2,2)$, $\xi_0'=\frac{1}{2\sin\xi_0}$.
Differentiating $u_n$, we have
$$u'_n=\frac{1}{2\sin\xi_0}\left( \frac{n\cos n\xi_0}{\sin \xi_0}-\frac{\sin n\xi_0\cdot\cos\xi_0}{\sin^2 \xi_0} \right).$$
The singularity comes when $\xi_0$ approaches $0$ and $\pi$.

\subsection{Denominators of continued fraction expansion}
Define as usual for $0 <\alpha< 1$,
$$a_0=0,\quad \alpha_0=\alpha,$$
and inductively for $k\geq 1$, as the fractional part of
$$ a_k:=\max\{n\in\Z: n\leq \alpha_{k-1}^{-1}\}, \quad \alpha_k:=\alpha_{k-1}^{-1}-a_k. $$
Then we define
$$p_0=0, \;\ q_0=1,  \quad p_1=a_1, \;\ q_1=1,$$
and inductively,
$\left\{\begin{array}{c}
           p_k=a_k p_{k-1}+p_{k-2} \\[1mm]
           q_k=a_k q_{k-1}+q_{k-1}
         \end{array}
\right. $.

Recall that the sequence $(q_n)$ is the sequence of best denominators of $\alpha\in \R\setminus\Q$
since it satisfies
$$\|k\alpha\|_{\T}\geq\|q_{n-1}\alpha\|_{\T}, \quad \forall \, 1\leq k\leq q_n,$$
and $\|q_n \alpha\|_\T \leq \frac{1}{q_{n+1}}$, where $\|x\|_{\T}:=\inf_{p\in\Z}|x-p|$ for $x\in\R$.

\subsection{Regularity in the sense of Whitney}

Given a closed subset $S$ of $\R$ and $r\in\Z_+$. We give a precise definition of $C^r$ in the sense of Whitney, corresponding to a more general definition in \cite{Poschel}.

\begin{definition}[$C^r$ in the sense of Whitney]\label{definition_whitney}
Given $r+1$ functions $F_k:S\rightarrow \C$(or $\mathbb{M}(2,\C)$), $k=0,\cdots, r$, and some $0<M<\infty$, such that for $k=0,\cdots, r$,
\begin{equation}\label{condition_Whitney}
|F_k(x)| \leq M, \;\  |F_k(x)-P_k(x,y)|\leq M|x-y|^{1-\frac{k}{r+1}}, \quad  \forall \, x,y\in S,
\end{equation}
where $P_k(x,y):=\sum_{k+l\leq r}\frac{1}{l!}F_{k+l}(y)(x-y)^l$.
We say that $F_0$ is {$C^r$ in the sense of Whitney} on $S$, denoted by $F_0\in C_W^r(S)$, with the $k^{\rm th}-$order derivative $F_k$, $k=1,\cdots,r$.
The $C_W^r(S)-$norm of $F_0$ is defined as
$$|F_0|_{C_W^r(S)}:=\inf M.$$
\end{definition}

\begin{rema}
By Whitney's extension theorem\cite{Whitney}, we can find an extension $\tilde F:\R\rightarrow \C$, which is $C^r$ on $\R$ in the natural sense, such that
$\tilde F|_S=F_0$ and $\tilde F^{(k)}|_S=F_k$. Indeed, the estimation for the upper bound of the $C_W^r(S)-$norm can be realized by estimating the extension.
\end{rema}


\subsection{Notations.}
{\rm 1)} For any $x\in\R$, let $\lfloor x \rfloor:=\max\{n\in\Z: n\leq x\}$ and $\|x\|:=\|x\|_{\T}=\inf_{p\in\Z}|x-p|$.

\smallskip

\noindent {\rm 2)} For any subset $S\subset\R$, let $|S|$ be its Lebesgue measure, $cl\{S\}$ be its closure and
for any $r>0$, $B(S,r):=\{x\in\R: |x-y|<r, \exists\,  y\in S\}$. Given any function (possibly matrix-valued) $f(E)$ on $\R$, let $supp(f)$ be its support, and $\partial_E^k$ be its $k^{\rm th}-$order derivative(possibly in the sense of Whitney).

\smallskip

\noindent {\rm 3)} For $h>0$, let $\T_h:=\{z\in\C / \Z:   |{\Im} z|<h\}$. Given any analytic function $f$ on $\T_h$, let $\hat f(n)$ be its $n^{\rm th}-$Fourier coefficient, $n\in\Z$.

\smallskip

\noindent {\rm 4)}  Given $\alpha\in\R$, for any function $\phi : \T_h \to \C$ and any $n \in \N$, we define the Birkhoff sum of $\phi$ over $\theta \mapsto\theta + \alpha$ by
$$ \phi^{[n]}(x) := \sum_{k=0}^{n-1}\phi(\theta + k\alpha), \quad \phi^{[-n]}(x) := \sum_{k=-n}^{-1}\phi(\theta + k\alpha).$$
Moreover, we denote $R_{\phi}:=\left(\begin{array}{cc}
                   \cos\phi & -\sin\phi \\
                   \sin\phi & \cos\phi
                 \end{array}\right)$.

\smallskip

\noindent{\rm 5)} Given a compact set $S\subset\R$,
for any function $F$ on $S \times \T_h$, denote
$$|F|_{S,\, h}:=\sup_{E\in S}\sup_{\theta\in\T_h}|F(E,\theta)|.$$
When $S$ is a compact set or a finite union of intervals, denote $$|F|_{C^j(S),\, h}:=\sup_{E\in S}\sup_{\theta\in\T_h}\left(\sum_{l=0}^j|\partial_{E}^{l}F(E,\theta)|\right), \  j\in\Z_+.$$
For the function $F$ defined on $S\times \T$, we define the norms $|F|_{S}$ and $|F|_{C^j(S)}$ in a similar fashion, without showing the subscriprt ``$\T$" explicitly. If there is no confusion, we denote $|\cdot|_{S}$ by $|\cdot|$.

\smallskip

In this paper, in the formulations and proofs of various assertions,
we shall encounter constants which maybe depend on various quantities, but independent of the index $l$ which represents the iteration step,
 or the variables (i.e., the phase $\theta$, the energy $E$ and the time $t$). All such constants
will be denoted by $c$, $c_1$, $c_2$, $\cdots$, and sometimes even
different constants will be denoted by the same symbol if there is no ambiguity.
Moreover, in some estimates, we use the notation ``$\lesssim$" or ``$\gtrsim$" instead of showing the numerical constant explicitly.

\section{Admissible subsequences of denominators}

In this section, we will explain how to choose subsequences of denominators of an irrational number, which will later serve as indices of resonance in the KAM scheme.
As an application, we construct two subsequences depending on a given parameter $\eta>0$, so that for any $T$ sufficiently large, the interval of the form $[T^{\eta}, T]$ will be well contained in one interval defined by these subsequences(see the definitions of ${\cal M}_l$ and $\tilde{\cal M}_l$ in Subsection 5.5 for detail).

Given $\alpha\in\R\setminus\Q$, with $(q_n)$ its sequence of best denominators.

\begin{definition}[$CD({\cA}, {\cB}, {\cC})$ bridge, Definition 3.1 in \cite{AFK}]
Given $0 < {\cA} \leq {\cB} \leq {\cC}$. For integers $1 \leq l < n$,
 we say that the pair of denominators $(q_l, q_n)$ forms a $CD({\cA}, {\cB}, {\cC})$ bridge if
\begin{itemize}
\item $q_{i+1} \leq q_i^{{\cA}}$, $\forall \, i = l, \cdots, n-1$;
\item $q_{l}^{{\cC}} \geq q_n \geq q_{l}^{{\cB}}$.
\end{itemize}
\end{definition}

Given $0<{\cA}\leq {\cB}'\leq{\cB}\leq{\cC}\leq{\cC}'$, it is obvious that any $CD({\cA}, {\cB}, {\cC})$ bridge is also a $CD({\cA}, {\cB}', {\cC}')$ bridge.

\smallskip

For any subsequence $(q_{n_k})$ of $(q_n)$,
we follow the notations in \cite{AFK} and denote subsequences $(Q_{k}) = (q_{n_k})$ and $(\overline{Q}_k)=(q_{n_k + 1})$.
The properties of these notations have been exploited in Proposition 3.1 in \cite{AFK}. We will recall it later.
\begin{definition}[$({\cA}, {\cB}, {\cC}, {\cD})-$admissible] Given $0 < {\cA} \leq {\cB} \leq {\cC}$ and ${\cD}>0$,
a subsequence $(Q_k)$ is called $({\cA}, {\cB}, {\cC}, {\cD})-$admissible if for any $k \geq 1$
\begin{itemize}
\item $Q_{k} \leq \overline{Q}_{k-1}^{{\cD}}$;
\item either $\overline{Q}_k > Q_k^{{\cA}}$ or $(\overline{Q}_{k-1}, Q_{k})$, $(Q_{k}, Q_{k+1})$ are both $CD({\cA}, {\cB}, {\cC})$ bridges.
\end{itemize}
\end{definition}

Given $0<{\cA}\leq {\cB}'\leq{\cB}\leq{\cC}\leq{\cC}'$ and $0<{\cD}\leq{\cD}'$, it is obvious that any $({\cA}, {\cB}, {\cC}, {\cD})-$admissible subsequence is also $({\cA},{\cB}', {\cC}', {\cD}')-$admissible.

\clr

\clb

\begin{lemma}[essentially Lemma 3.2 of \cite{AFK}]\label{lemma_Qk}
Given any $\mathcal{A} > 1$, there exists an $(\cA, \cA^{3}, \cA^{21}, \cA^{20})-$admissible subsequence $(Q_k)$ with $Q_0 =1$.
\end{lemma}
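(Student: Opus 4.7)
The plan is to construct the subsequence $(n_k)_{k\ge 0}$ inductively, following Lemma~3.2 of \cite{AFK} with the exponents $(\cA,\cA^{3},\cA^{21},\cA^{20})$ calibrated to our needs. Initialize $n_0=0$ so that $Q_0=q_0=1$. Call an index $m\ge 1$ a \emph{resonance} when $q_{m+1}>q_m^{\cA}$; given $n_k$, let $m^{*}:=\min\{m>n_k:q_{m+1}>q_m^{\cA}\}$, with the convention $m^{*}=+\infty$ if no such $m$ exists. By minimality, $q_{i+1}\le q_i^{\cA}$ for every $n_k\le i<m^{*}$, which is the lever used to propagate bounds through the slow-growth stretch.

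Split the definition of $n_{k+1}$ into two cases, keyed to the position of $q_{m^{*}}$ relative to the threshold $\overline Q_k^{\cA^{20}}$. In the \emph{resonant branch}, where $q_{m^{*}}\le \overline Q_k^{\cA^{20}}$, set $n_{k+1}=m^{*}$: then $\overline Q_{k+1}=q_{m^{*}+1}>q_{m^{*}}^{\cA}=Q_{k+1}^{\cA}$, so the first alternative of admissibility holds at level $k+1$, and the $\cD$-condition $Q_{k+1}\le \overline Q_k^{\cA^{20}}$ is immediate by choice. In the \emph{bridge branch}, where $q_{m^{*}}> \overline Q_k^{\cA^{20}}$, take $n_{k+1}$ to be the largest $m<m^{*}$ with $q_m\le \overline Q_k^{\cA^{20}}$; maximality together with the no-resonance estimate $q_{n_{k+1}+1}\le q_{n_{k+1}}^{\cA}$ (valid since $n_{k+1}<m^{*}$) yields $q_{n_{k+1}}>\overline Q_k^{\cA^{19}}$. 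Using the slack $\cA^{19}\ge\cA^{3}$, $\cA^{20}\le\cA^{21}$, and (when $n_k$ is itself non-resonant) $q_{n_k+1}\le q_{n_k}^{\cA}$ to translate bounds between $Q_k$ and $\overline Q_k$, one checks that both $(\overline Q_k,Q_{k+1})$ and $(Q_k,Q_{k+1})$ are $CD(\cA,\cA^{3},\cA^{21})$ bridges.

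Admissibility at each level $k$ then follows by induction: the $\cD$-bound is built into the construction, and the second clause is satisfied either via the first alternative (when step $k$ used the resonant branch, hence $\overline Q_k>Q_k^{\cA}$) or via the pair consisting of the incoming bridge $(\overline Q_{k-1},Q_k)$ (established at step $k$) and the outgoing bridge $(Q_k,Q_{k+1})$ (established at step $k+1$).

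The principal obstacle I foresee is the compatibility between consecutive steps at a bridge-to-resonance transition: when step $k$ uses the bridge branch but step $k+1$ uses the resonant branch with $q_{m^{*}}$ very close to $q_{n_k}$, the outgoing bridge's lower endpoint $q_{n_{k+1}}\ge q_{n_k}^{\cA^{3}}$ can fail. This is remedied by an auxiliary sub-case inside the resonant branch: whenever $q_{m^{*}}<q_{n_k}^{\cA^{3}}$, iterate past $m^{*}$ to the smallest $m$ with $q_m\ge q_{n_k}^{\cA^{3}}$, using that the post-resonance jump $q_{m^{*}+1}>q_{m^{*}}^{\cA}$ combined with subsequent slow growth carries $q_m$ into the bridge window $[q_{n_k}^{\cA^{3}},q_{n_k}^{\cA^{21}}]$ within finitely many steps. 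The exponent spread between $\cA^{3}$ and $\cA^{21}$ is precisely what guarantees that the iteration terminates inside the admissible window, and once all the cases are laid out, the remaining verifications reduce to arithmetic of the four exponents.
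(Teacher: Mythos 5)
The paper itself gives no proof of this lemma; it cites Lemma~3.2 of \cite{AFK} (which constructs an $(\cA,\cA,\cA^3,\cA^4)$-admissible sequence) and asserts that the modifications for the exponents $(\cA,\cA^3,\cA^{21},\cA^{20})$ are routine. Your construction follows the AFK template, and the verifications you do carry out (the $\cD$-condition, the bridge $(\overline Q_{k-1},Q_k)$, and the bridge $(Q_k,Q_{k+1})$ in the bridge-to-bridge case, where $Q_{k+1}>\overline Q_k^{\cA^{19}}>Q_k^{\cA^{19}}\ge Q_k^{\cA^3}$) are correct. But you have correctly located the problematic case and your remedy for it does not work, so the proof as written has a genuine gap.

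Here is the problem concretely. Suppose step $k-1\to k$ used the bridge branch, so $n_k$ is \emph{not} a resonance and $\overline Q_k\le Q_k^{\cA}$; by the admissibility definition we then \emph{must} have $(Q_k,Q_{k+1})$ a $CD(\cA,\cA^3,\cA^{21})$ bridge, in particular $Q_{k+1}\ge Q_k^{\cA^3}$. If moreover step $k\to k+1$ falls into your resonant branch with $m^{**}$ as small as $n_k+1$ (this happens whenever the bridge branch at step $k-1\to k$ placed $n_k$ right below the resonance $m^*_{k-1}$), then $Q_{k+1}=q_{m^{**}}=q_{n_k+1}=\overline Q_k\le Q_k^{\cA}<Q_k^{\cA^3}$, and the required bridge fails. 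This scenario is not excluded by your construction: the bridge branch sets $n_k$ to the largest $m<m^*_{k-1}$ with $q_m\le\overline Q_{k-1}^{\cA^{20}}$, and nothing prevents that index from being $m^*_{k-1}-1$. Your proposed remedy --- take $n_{k+1}$ to be the smallest $m$ with $q_m\ge Q_k^{\cA^3}$, iterating past $m^{**}$ --- cannot produce a $CD$-bridge, because the interval $\{n_k,\dots,n_{k+1}-1\}$ then contains the resonance $m^{**}$, so the growth condition $q_{i+1}\le q_i^{\cA}$ of a $CD(\cA,\cdot,\cdot)$ bridge fails at $i=m^{**}$. In addition, the claim that this iteration ``terminates inside the admissible window'' is false in general: the post-resonance jump $q_{m^{**}+1}$ is not bounded above (continued-fraction partial quotients are unbounded), so the smallest $m$ with $q_m\ge Q_k^{\cA^3}$ may already satisfy $q_m>Q_k^{\cA^{21}}$, overshooting the window.

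The fix must be applied one step earlier, in the \emph{bridge branch} rather than the resonant branch: when $q_{m^*}>\overline Q_k^{\cA^{20}}$, set $n_{k+1}$ to be the largest $m<m^*$ with $q_m\le\min\bigl(\overline Q_k^{\cA^{20}},\,q_{m^*}^{1/\cA^3}\bigr)$. The extra constraint $Q_{k+1}^{\cA^3}\le q_{m^*}$ guarantees that the subsequent resonant jump to $m^*$ satisfies the bridge lower bound, while maximality together with $q_{n_{k+1}+1}\le Q_{k+1}^{\cA}$ still gives $Q_{k+1}>\min\bigl(\overline Q_k^{\cA^{19}},q_{m^*}^{1/\cA^4}\bigr)\ge\overline Q_k^{\cA^{16}}$, which suffices for the lower bound of both bridges; the upper bounds and the $\cD$-condition follow as in your argument. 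This is exactly the kind of ``obvious modification'' the paper's remark alludes to, but it is the crux of the lemma, and your write-up neither formulates it correctly nor verifies it.
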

\begin{rema}
In Lemma 3.2 of \cite{AFK}, the authors showed the existence of $(\cA, \cA, \cA^3, \cA^4)-$admissible subsequences. Lemma \ref{lemma_Qk} can be shown following their proof with obvious modifications.
\end{rema}

\smallskip

In particular, from the definition of $(\cA, \cA^{3}, \cA^{21}, \cA^{20})-$admissible subsequence, we can easily see that $Q_{k+1}\geq Q_{k}^{\cA}$ for each $k$. Indeed,
\begin{itemize}
  \item If $\overline{Q}_{k} > Q_{k}^{\cA}$, it is obvious since $\overline{Q}_k\leq Q_{k+1}$.
  \item If $\overline{Q}_{k} \leq Q_{k}^{\cA}$, we have $(Q_{k}, Q_{k+1})$ is a $CD(\cA, \cA^{3}, \cA^{21})$ bridge, so $Q_{k+1} \geq Q_{k}^{\cA^{3}} \geq  Q_{k}^{\cA}$.
\end{itemize}

\begin{lemma} \label{lemma two sequences}
Let $\mathcal{A}>1$ and $(Q_k)$ be an $(\cA, \cA^{3}, \cA^{21}, \cA^{20})-$admissible subsequence. There exists a subsequence $(R_k)$ such that $R_0 =1$, and
\begin{itemize}
  \item [\rm (1)]$Q_{k}^{\cA}\leq R_{k}^{\cA}\leq Q_{k+1}$ and $\overline{R}_k \geq Q_{k}^{\cA}$ for each $k \geq 1$;
  \item [\rm (2)]$(R_k)$ is $(\cA, \cA, \cA^{22},\cA^{21})-$admissible.
\end{itemize}
\end{lemma}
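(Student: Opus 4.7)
The plan is to define $R_k$ by locating the right scale within $(q_n)$: set $R_0 := 1$ and, for $k \geq 1$, let $m_k$ be the smallest nonnegative integer with $q_{m_k+1} \geq Q_k^{\mathcal{A}}$, and put $R_k := q_{m_k}$. Minimality gives $R_k < Q_k^{\mathcal{A}}$, while $\overline{R}_k = q_{m_k+1} \geq Q_k^{\mathcal{A}}$ is built in by construction. The $(\mathcal{A},\mathcal{A}^3,\mathcal{A}^{21},\mathcal{A}^{20})$-admissibility of $(Q_k)$ yields a dichotomy: either $\overline{Q}_k > Q_k^{\mathcal{A}}$, so $m_k = n_k$ and $R_k = Q_k$; or $(Q_k, Q_{k+1})$ is a $CD(\mathcal{A},\mathcal{A}^3,\mathcal{A}^{21})$ bridge, in which case the slow-growth bound $q_{i+1} \leq q_i^{\mathcal{A}}$ forces $n_k < m_k < n_{k+1}$ and $Q_k \leq R_k < Q_k^{\mathcal{A}}$. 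In both cases $R_k \geq Q_k$, which proves the lower bound in Property (1); the upper bound $R_k^{\mathcal{A}} \leq Q_{k+1}$ follows from either $Q_{k+1} \geq \overline{Q}_k > Q_k^{\mathcal{A}} = R_k^{\mathcal{A}}$ in the first case, or from $Q_{k+1} \geq Q_k^{\mathcal{A}^3} > Q_k^{\mathcal{A}^2} > R_k^{\mathcal{A}}$ in the second, and the gap $\overline{R}_k \geq Q_k^{\mathcal{A}}$ holds by construction.

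For Property (2), the bound $R_k \leq \overline{R}_{k-1}^{\mathcal{A}^{21}}$ splits according to the nature of $R_{k-1}$. If $R_{k-1} = Q_{k-1}$ then $\overline{R}_{k-1} = \overline{Q}_{k-1}$, and the input admissibility gives $R_k < Q_k^{\mathcal{A}} \leq \overline{Q}_{k-1}^{\mathcal{A}^{21}}$ via $Q_k \leq \overline{Q}_{k-1}^{\mathcal{A}^{20}}$; otherwise $(Q_{k-1}, Q_k)$ is a $(Q_k)$-bridge, so $\overline{R}_{k-1} \geq Q_{k-1}^{\mathcal{A}}$ and $Q_k \leq Q_{k-1}^{\mathcal{A}^{21}} \leq \overline{R}_{k-1}^{\mathcal{A}^{21}}$. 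For the second admissibility clause: if $\overline{Q}_k > Q_k^{\mathcal{A}}$ then $\overline{R}_k = \overline{Q}_k > R_k^{\mathcal{A}}$ gives the jump alternative directly; otherwise one must verify that both $(\overline{R}_{k-1}, R_k)$ and $(R_k, R_{k+1})$ are $CD(\mathcal{A},\mathcal{A},\mathcal{A}^{22})$ bridges. The slow-growth inequality $q_{i+1} \leq q_i^{\mathcal{A}}$ on the relevant index ranges is inherited because those ranges are subsets of the $(Q_k)$-bridges $(\overline{Q}_{k-1}, Q_k)$ and $(Q_k, Q_{k+1})$; and the sandwich exponents $\overline{R}_{k-1}^{\mathcal{A}} \leq R_k \leq \overline{R}_{k-1}^{\mathcal{A}^{22}}$ and $R_k^{\mathcal{A}} \leq R_{k+1} \leq R_k^{\mathcal{A}^{22}}$ follow from combining $Q_k \leq R_k < Q_k^{\mathcal{A}}$ with the bridge exponents $Q_k^{\mathcal{A}^3} \leq Q_{k+1} \leq Q_k^{\mathcal{A}^{21}}$ of $(Q_k)$, splitting into the same two subcases for $\overline{R}_{k-1}$.

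The main subtlety is the step $k-1 \to k$: the control of $\overline{R}_{k-1}$ bifurcates according to whether $R_{k-1}$ equals $Q_{k-1}$ or has been shifted strictly beyond it, and a different $(Q_k)$-bridge must be invoked in each sub-case. The target exponents $\mathcal{A}^{21}$ and $\mathcal{A}^{22}$ for $(R_k)$ are tuned to leave precisely one factor of $\mathcal{A}$ of margin over the input exponents $\mathcal{A}^{20}$ and $\mathcal{A}^{21}$ of $(Q_k)$, which is exactly what the above case analysis consumes, so all the verifications close at the correct exponents.
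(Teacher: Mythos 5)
Your proposal is correct and takes essentially the same route as the paper: you define $R_k$ as the largest denominator strictly below $Q_k^{\mathcal{A}}$ (your $m_k$ formulation is equivalent), split according to whether $\overline{Q}_k$ clears $Q_k^{\mathcal{A}}$, and verify the two admissibility clauses for $(R_k)$ by locating the relevant index ranges inside the $(Q_k)$-bridges $(\overline{Q}_{k-1},Q_k)$ and $(Q_k,Q_{k+1})$ and combining the exponent bounds with $Q_k\le R_k<Q_k^{\mathcal{A}}$. The only spot worth tightening is the bound $R_k\le\overline{R}_{k-1}^{\mathcal{A}^{21}}$: in your ``otherwise'' subcase you only bound $Q_k$ and not $R_k$; the paper's cleaner chain $R_k\le Q_k^{\mathcal{A}}\le\overline{Q}_{k-1}^{\mathcal{A}^{21}}\le\overline{R}_{k-1}^{\mathcal{A}^{21}}$ (using $\overline{Q}_{k-1}\le\overline{R}_{k-1}$, which holds unconditionally since $R_{k-1}\ge Q_{k-1}$) avoids the case split there altogether, and you should also allow $m_k=n_k$ in the boundary case $\overline{Q}_k=Q_k^{\mathcal{A}}$; neither affects the conclusion.
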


\proof For $k \geq 1$, define $R_{k}$ as the largest denominator such that $Q_{k} \leq R_{k} < Q_{k}^{\cA}$. Then we have $\overline{R}_k \geq Q_{k}^{\cA}$ since $\overline{R}_k > R_k $. We have two cases :
\begin{itemize}
  \item If $\overline{Q}_{k} > Q_{k}^{\cA}$, we can see $R_{k} = Q_{k}$. So $R_{k}^{\cA}\leq \overline{Q}_{k} (=\overline{R}_{k})\leq Q_{k+1}$.
  \item If $\overline{Q}_{k} \leq Q_{k}^{\cA}$, we have that $Q_{k+1} \geq Q_{k}^{\cA^{3}} \geq R_{k}^{\cA^2}$.
\end{itemize}
In both case we have $Q_{k+1} \geq R_{k}^{\cA}$. (1) is proven.

It remains to verify that $(R_k)$ is $(\cA, \cA, \cA^{22},\cA^{21})-$admissible.  By the property of $(Q_k)$ and the definition of $(R_k)$, we have
\begin{equation}\label{CD_bridge2}
R_{k} \leq Q_{k}^{\cA} \leq \overline{Q}_{k-1}^{\cA^{21}} \leq \overline{R}_{k-1}^{\cA^{21}} \ {\rm for \ every } \ k\geq1.
\end{equation}
Assume that $\overline{R}_{k}\leq R_{k}^{\cA}$ for some $k$(otherwise we can finish the proof without showing that $(\overline{R}_{k-1}, R_{k})$, $(R_{k}, R_{k+1})$ are both $CD({\cA}, {\cA}, {\cA}^{22})$ bridges).
As shown above, $\overline{Q}_{k} > Q_{k}^{\cA}$ implies $\overline{R}_k> R_{k}^{\cA}$, so we have $\overline{Q}_{k} \leq Q_{k}^{\cA}$.
Thus $(\overline{Q}_{k-1}, Q_{k})$, $(Q_{k}, Q_{k+1})$ are both $CD({\cA}, {\cA}^3, {\cA}^{21})$ bridges.

 By (1), we have $\overline{Q}_{k-1} \leq \overline{R}_{k-1} \leq Q_{k}\leq R_k$. Since  $(\overline{Q}_{k-1}, Q_{k})$ is a $CD({\cA}, {\cA}^3, {\cA}^{21})$ bridge, we have that $R_{k} \geq Q_{k} \geq \overline{Q}_{k-1}^{\cA^{3}}$ and
 \begin{equation}\label{CD_bridge111}
q_{i+1}\leq q_i^{{\cal A}} \quad  {\rm for \ every \ } \overline{Q}_{k-1}\leq q_i < Q_k.
 \end{equation}
 Moreover, since $R_{k} < Q_k^{\cA}$, we know that
 \begin{equation}\label{CD_bridge112}
q_{i+1}\leq q_i^{{\cal A}} \quad  {\rm for \ every \ }Q_{k}\leq q_i < R_k.
 \end{equation}
If $\overline{Q}_{k-1} > Q_{k-1}^{\cA}$, then $R_{k-1} = Q_{k-1}$ and as a result $\overline{R}_{k-1} = \overline{Q}_{k-1}$. Otherwise we have that
$\overline{R}_{k-1} \leq R_{k-1}^{\cA} < Q_{k-1}^{\cA^{2}}$.
In both case we have $\overline{R}_{k-1} \leq \overline{Q}_{k-1}^{\cA^{2}}$.
Thus $\overline{R}_{k-1}^{\cA}\leq \overline{Q}_{k-1}^{\cA^{3}} \leq R_{k}$.
Combing with (\ref{CD_bridge2}), (\ref{CD_bridge111}) and (\ref{CD_bridge112}), we get that $(\overline{R}_{k-1}, R_{k})$ is a $CD(\cA, \cA, \cA^{22})$ bridge.

By the property of the $CD(\cA, \cA^{3}, \cA^{21})$ bridge $(Q_k, Q_{k+1})$, together with the definition of $(R_k)$,
 we have $R_{k}^{\cA^{2}} \leq Q_{k}^{\cA^3} \leq Q_{k+1} \leq R_{k+1}\leq Q_{k+1}^{\cA} \leq Q_{k}^{{\cA}^{22}} \leq R_{k}^{{\cA}^{22}}$ and
 $$q_{i+1}\leq q_i^{{\cal A}} \quad  {\rm for \ every \ } {R}_k\leq q_i < Q_{k+1}.$$
Since $R_{k+1} \leq Q_{k+1}^{\cA}$, we also have
$$q_{i+1}\leq q_i^{{\cal A}} \quad  {\rm for \ every \ }Q_{k+1}\leq q_i < R_{k+1}.$$
Thus $(R_k, R_{k+1})$ is a $CD(\cA, \cA, \cA^{22})$ bridge. This completes the proof of (2).\qed

The main property of admissible sequences we will use is summarised in the following proposition, which is essentially contained in Proposition 3.1 in \cite{AFK}.

\begin{prop}[Proposition 3.1 in \cite{AFK}]\label{prop_AFK}
Given any $\eta\in(0,1)$, $h_*>0$ and $M>1$, there exists $\cA_1  = \cA_1(M)> 0$ such that for any $\cA > \cA_1$, there exists $C(h_*,\eta, \cA)>0$, such that for any irrational $\alpha$, any $(\cA, \cA, \cA^{22}, \cA^{21})-$admissible subsequence $(Q_k)$ of denominators of $\alpha$ with $Q_0=1$, and any $h>h_*$, any function $\phi\in C_h^{\omega}(\T,\R)$, it holds for any $k\in\Z_+$ and $h_k:=h(1-\eta k^{-2})$,
\begin{itemize}
\item $\|\phi^{[Q_k]}-Q_k \hat\phi(0)\|_{h_k}\leq C\|\phi-\hat\phi(0)\|_h(Q_k^{-M}+\overline{Q}_k^{-1+\frac{1}{M}})$,
\item for any $0 \leq l\leq Q_{k+1}$, $\|\phi^{[l]}-l \hat\phi(0)\|_{h_k}\leq C\|\phi-\hat\phi(0)\|_h(\overline{Q}_k Q_k^{-M}+\overline{Q}_k^{\frac{1}{M}})$.
\end{itemize}
 \end{prop}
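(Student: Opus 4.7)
The strategy is to follow the Fourier-analytic approach of Avila-Fayad-Krikorian (Proposition 3.1 of \cite{AFK}): one expands each Birkhoff sum of $\phi$ in harmonics, combines the analytic decay of $\hat\phi(n)$ with the arithmetic separation of $\|n\alpha\|_\T$ from $0$ on suitable frequency ranges, and then uses the admissibility hypothesis on $(Q_k)$ to control the accumulated loss across the bridge structure.

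Concretely, the identity
\begin{equation*}
\phi^{[N]}(\theta)-N\hat\phi(0)=\sum_{n\neq 0}\hat\phi(n)\,D_N(n\alpha)\,e^{2\pi in\theta},\qquad D_N(x)=\sum_{j=0}^{N-1}e^{2\pi ijx},
\end{equation*}
together with the elementary bounds $|D_N(x)|\leq\min(N,(2\|x\|_\T)^{-1})$ and the small-angle refinement $|D_N(x)|\leq|\sin(\pi Nx)|/|\sin(\pi x)|$, combined with $|\hat\phi(n)|\leq\|\phi-\hat\phi(0)\|_h e^{-2\pi|n|h}$ and $|e^{2\pi in\theta}|\leq e^{2\pi|n|h_k}$ on $\T_{h_k}$, reduces the problem to estimating
\begin{equation*}
S_N:=\sum_{n\neq 0}e^{-2\pi|n|(h-h_k)}|D_N(n\alpha)|
\end{equation*}
with loss $h-h_k=h\eta k^{-2}$. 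The next step is to partition the frequencies by the best denominators of $\alpha$: for $q_j\leq|n|<q_{j+1}$ one has $\|n\alpha\|_\T\geq\|q_j\alpha\|_\T\geq(2q_{j+1})^{-1}$, and inside any $CD(\cA,\cA,\cA^{22})$ bridge guaranteed by admissibility the inequality $q_{i+1}\leq q_i^{\cA}$ makes the successive denominators grow in a controlled geometric fashion so that the contributions at successive scales can be telescoped. For $N=Q_k$ I split $S_{Q_k}$ into three regimes. In the low-frequency regime $|n|<\overline{Q}_k$ the small-angle bound $|D_{Q_k}(n\alpha)|\lesssim|n|/(\overline{Q}_k\|n\alpha\|_\T)$ applies, and telescoping along the bridge $(\overline{Q}_{k-1},Q_k)$ yields a contribution of order $Q_k^{-M}$ provided $\cA>\cA_1(M)$ is large enough that the accumulated powers of $\cA$ from the bridge are beaten by the $\overline{Q}_k^{-1}$ gain. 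In the intermediate regime $\overline{Q}_k\leq|n|\leq\overline{Q}_k^{\,1+1/M}$ use $|D_{Q_k}|\leq(2\|n\alpha\|_\T)^{-1}$ and the bridge $(Q_k,Q_{k+1})$ to get a bound of size $\overline{Q}_k^{-1+1/M}$. In the far tail $|n|>\overline{Q}_k^{\,1+1/M}$ the exponential factor $e^{-2\pi|n|h\eta k^{-2}}$ beats any polynomial since $h>h_*$, and the contribution is negligible. The second estimate is obtained from the same splitting by replacing the factor $|n|\|Q_k\alpha\|_\T$ by the trivial bound $1$ and $Q_k$ by $l\leq Q_{k+1}$; this costs an extra $\overline{Q}_k$ in the main term and gives $\overline{Q}_k Q_k^{-M}$ and $\overline{Q}_k^{\,1/M}$ as asserted.

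The main obstacle is the bookkeeping: the constant $C=C(h_*,\eta,\cA)$ must simultaneously absorb the $k^2$ polynomial loss coming from $h-h_k=h\eta k^{-2}$, the accumulated powers of $\cA$ along each bridge whose length is controlled by $\cA^{22}$, and the number of scales in each bridge, while at the same time producing the sharp exponents $-M$ and $-1+1/M$. The quantitative choice $\cA_1=\cA_1(M)$ is designed precisely so that the geometric gain across a bridge dominates the accumulated powers of $\cA$ at each scale; once this verification is carried out, the argument is essentially the one in \cite{AFK}, with only cosmetic modifications due to the slightly larger admissibility parameters $(\cA,\cA,\cA^{22},\cA^{21})$ compared to the parameters $(\cA,\cA,\cA^{3},\cA^{4})$ used there.
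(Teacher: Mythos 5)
The paper does not prove this statement; it is cited verbatim (modulo the larger admissibility parameters) from Proposition~3.1 of \cite{AFK}, so there is no internal proof to compare against. Your sketch reconstructs the AFK argument rather than inventing a new one, which is the right move.

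Your technical ingredients are correct: the Fourier expansion with the Dirichlet kernel $D_N$, the two bounds $|D_N(x)|\leq\min(N,(2\|x\|_\T)^{-1})$ and the small-angle refinement $|D_{Q_k}(n\alpha)|\lesssim |n|/(\overline{Q}_k\|n\alpha\|_\T)$ for $|n|<\overline{Q}_k$ (which uses $\|Q_k n\alpha\|\le |n|\|Q_k\alpha\|\le |n|/\overline{Q}_k$), the analytic decay $|\hat\phi(n)|\le\|\phi-\hat\phi(0)\|_h e^{-2\pi|n|h}$, and the fact that the far tail $|n|>\overline{Q}_k^{\,1+1/M}$ is killed by $e^{-2\pi|n|h\eta k^{-2}}$ because $\overline{Q}_k$ grows tower-like in $k$.

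However, you defer exactly the part that makes the proposition nontrivial, and in doing so you gloss over a case distinction that is needed to see where the two terms $Q_k^{-M}$ and $\overline{Q}_k^{-1+1/M}$ actually come from. The $(\cA,\cA,\cA^{22},\cA^{21})$-admissibility has a disjunction: either $\overline{Q}_k>Q_k^{\cA}$ (a strongly resonant scale, in which case $\|Q_k\alpha\|<Q_k^{-\cA}$ is very small and the bound is driven by $\overline{Q}_k^{-1+1/M}$), or both $(\overline{Q}_{k-1},Q_k)$ and $(Q_k,Q_{k+1})$ are $CD(\cA,\cA,\cA^{22})$ bridges (in which case telescoping the denominators $q_j<q_{j+1}\le q_j^{\cA}$ between $\overline{Q}_{k-1}$ and $Q_k$ produces $Q_k^{-M}$, with $\cA_1(M)$ chosen so that the geometric growth of the $q_j$ inside the bridge beats the accumulated $\cA$-powers). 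Your write-up speaks as if both bridges are always available and invokes "telescoping along the bridge $(\overline{Q}_{k-1},Q_k)$" and "the bridge $(Q_k,Q_{k+1})$" without noting that in the resonant branch these may not hold and the argument must switch to a different mechanism. Since the whole conclusion is a sum of two terms coming from two mutually exclusive regimes, this is not merely bookkeeping: it is the structural core, and a reader following your sketch alone would get stuck at precisely the point you label "the main obstacle."

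Your treatment of the second estimate (for general $0\le l\le Q_{k+1}$) is fine in spirit: one loses the small-angle gain $|n|/\overline{Q}_k$ and replaces $Q_k$ by $Q_{k+1}\le\overline{Q}_k^{\cA^{21}}$, which inflates the main term by roughly $\overline{Q}_k$. So the sketch is a faithful summary of the AFK approach, but as a proof it has a real gap at the bridge/resonance dichotomy.
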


\section{A KAM scheme for $SL(2,\R)$ cocycles}


In this section, we recall the KAM scheme for $SL(2,\R)$ cocycles developed in \cite{AFK}.

Given $\alpha\in\R\setminus\Q$ with $(q_n)_{n\geq 0}$ the sequence of denominators, and an open interval $J\subset\R$.
For $A:J\times\T\to SL(2,\R)$, analytic on $J\times \T_h$ for some $h>0$,
we consider the cocycle $(\alpha, A)$:
$$
\left(\begin{array}{c}
u_{n+1} \\[1mm]
u_{n}
\end{array}
\right)=A(E,\theta+n\alpha)\left(\begin{array}{c}
u_{n} \\[1mm]
u_{n-1}
\end{array}
\right).
$$
For $n\geq 1$, we denote the iterates of $(\alpha,A)$ by
\begin{eqnarray*}
A^{(n)}(E,\theta)&:=& A(E,\theta+(n-1)\alpha)\cdots A(E,\theta) , \\[1mm]
A^{(-n)}(E,\theta)&:=& A(E,\theta-n\alpha)^{-1}\cdots A(E,\theta-\alpha)^{-1}  .
\end{eqnarray*}
Denote $\rho(E) := \rho_{(\alpha, A(E,\cdot))}$ the fibered rotation number.

%


Before stating the main result in this section, we introduce some necessary objets.
Given $\tau > 0$, $0 < \nu < \frac12$ and $\varepsilon > 0$, we denote
$$\mathcal{Q}_{\alpha}(\varepsilon, \tau, \nu) := \left\{\rho \in \T : \|2q_n\rho\| > \varepsilon \max(q_n^{-\tau}, q_{n+1}^{-\nu}), \ \forall n \in \N \right\}.$$
We call $\mathcal{Q}_{\alpha}(\varepsilon, \tau, \nu)$ the set of {\bf non-resonant fibered rotation numbers} with parameters $\varepsilon$, $\tau$, $\nu$.
For any subsequence $(Q_k)$ of denominators, define
$${\cal Q}_l:=\{\rho\in\T: \|2Q_{l}\rho\| > \varepsilon \max( Q_{l}^{-\tau}, \  \overline{Q}_{l}^{-\nu})\}, \quad \overline{\mathcal{Q}}_l := \cap_{k=1}^{l} \mathcal{Q}_{k}, \quad \overline{\mathcal{Q}} := \cap_{k=1}^{\infty} \mathcal{Q}_{k},$$
and $\Omega_l := \rho^{-1}(\mathcal{Q}_l)$, $\overline{\Omega}_l := \rho^{-1}(\overline{\mathcal{Q}}_l)$, $\overline{\Omega} := \rho^{-1}(\overline{\mathcal{Q}}) = \cap_{k=1}^{\infty} \Omega_k$.

\begin{prop} \label{prop reducibility}  Given $h > 0$, an open interval $J \subset \R$,  let $A : J \times \T \to SL(2,\R)$, analytic on  $J \times \T_h$, and $A_0\in SO(2,\R)$.
Given $\tau > 0$, $0 < \nu < \frac12$, $\varepsilon > 0$,  integer $r \geq 1$, there exists ${\cA}_0={\cA}_0(\tau, \nu) > 0$, such that for any ${\cA} > {\cA}_0$,
%
%
one can find $\epsilon_0 = \epsilon_0(\tau, \nu, \varepsilon, h, r, {\cA}) > 0$ such that
if
$|A - A_0|_{J, \, h} < \epsilon_0 $, then the following is true for any $({\cal A},{\cal A},{\cal A}^{22},{\cal A}^{21})-$admissible sequence $(Q_k)$.
\begin{enumerate}
\item There exist $\left\{\begin{array}{l}
    W :(J \cap \overline{\Omega}) \times \T \to SL(2,\R)\\[1mm]
    \phi : (J \cap \overline{\Omega}) \times \T \to \T
  \end{array}\right.$, such that for each $E \in J \cap \overline{\Omega}$, $W(E, \cdot)$ and $\phi(E, \cdot)$ are analytic, satisfying
\begin{equation} \label{reduc}
A(E,\theta) = W(E, \theta + \alpha) R_{\phi(E,\theta)} W(E,\theta)^{-1}.
\end{equation}
\item For $l \geq 1$, there exist $\left\{\begin{array}{l}
   W_l : (J \cap \overline{\Omega}_l) \times \T \to SL(2,\R) \\[1mm]
   \phi_l : (J \cap \overline{\Omega}_l) \times \T \to \T\\[1mm]
   \xi_l : (J \cap \overline{\Omega}_l) \times \T \to \M(2,\R)
  \end{array}\right.$, analytic on $(J \cap \overline{\Omega}_l) \times \T$, such that
  $$A(E,\theta) = W_l(E,\theta + \alpha)R_{\phi_l(E,\theta)}\left(id + \xi_l(E,\theta)\right) W_l(E,\theta)^{-1}$$
 and there exist constants $c_1$, $D_1>0$, such that for every $l\geq 1$,
\begin{eqnarray}
  |W |_{J \cap \overline{\Omega}},\ |\phi |_{J \cap \overline{\Omega}},\    |W_l |_{C^{ r }(J \cap \overline{\Omega}_{l})},\  |\phi_l|_{C^{ r}(J \cap \overline{\Omega}_{l})} &<& c_1, \label{prop_error_1} \\
  |W_l - W_{l+1}|_{C^{ r }(J \cap \overline{\Omega}_{l+1})},\  |\phi_l - \phi_{l+1}|_{C^{ r }(J \cap \overline{\Omega}_{l+1})} &<& c_1  e^{-\overline{Q}_l^{D_1}},  \label{prop_error_2}\\
  |\xi_l|_{C^{ r}(J \cap\overline{\Omega}_l)} &<&  c_1 e^{-\overline{Q}_l^{D_1}}.  \label{prop_error_3}
  \end{eqnarray}
\item  If there exists $m_1 \in\Z_+$ and $\Lambda \in C^{\omega}(\T,SL(2,\R))$ such that
\begin{equation}\label{1-direction_condition}
\inf_{v \in \mathbb{P}(\R^2)\atop{E \in J, \ \theta \in \T}} \partial_{E}(\Lambda(\theta + m_1\alpha)A^{(m_1)}(E,\theta)\Lambda(\theta)^{-1} v)  > 0,
\end{equation}
then there exists  $C_1 > 0$ and $n_0$, $l_0\in\Z_+$ such that for any $n  > n_0$, $l > l_0$,
 $$\partial_{E}\phi_l^{[n]}(E, \theta) > C_1 n, \quad \forall \,  E \in J \cap \overline{\Omega}_l, \  \theta \in \T.$$
\end{enumerate}
\end{prop}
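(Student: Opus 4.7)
The plan is to implement the KAM scheme of \cite{AFK} step by step, with the admissibility condition $(\cA,\cA,\cA^{22},\cA^{21})$ provided by Lemma \ref{lemma two sequences}: the enlargement from the $(\cA,\cA,\cA^{3},\cA^{4})$ of \cite{AFK} only enlarges constants in Proposition \ref{prop_AFK} and causes no essential change. Parts (1) and (2) are proved simultaneously by induction on $l$, with (1) obtained as a limit of (2) on $\overline{\Omega}=\bigcap_l\overline{\Omega}_l$, the exponential rate in (\ref{prop_error_2}) being summable in $l$.

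For the inductive step I assume
\[
A(E,\theta)=W_l(E,\theta+\alpha)R_{\phi_l(E,\theta)}(\mathrm{id}+\xi_l(E,\theta))W_l(E,\theta)^{-1}
\]
with $|\xi_l|_{C^{r}}\lesssim e^{-\overline{Q}_l^{D_1}}$ on $J\cap\overline{\Omega}_l$, and look for a conjugation $W_{l+1}=W_l\cdot\exp(\Psi_l)$ that absorbs $\xi_l$ modulo a new error of size $e^{-\overline{Q}_{l+1}^{D_1}}$. The linearised operator $\Psi\mapsto\Psi(\theta+\alpha)-R_{-\phi_l}\Psi(\theta)R_{\phi_l}$ is inverted by a Fourier series whose denominators involve $\|2k\alpha\pm 2\widehat{\phi_l}(0)\|$; truncating at the cutoff $\overline{Q}_l^{\cA}$ and invoking both the non-resonance $E\in\overline{\Omega}_{l+1}$ and the bridge structure between $Q_l$ and $Q_{l+1}$ via Proposition \ref{prop_AFK} controls the high-frequency tail and the small divisors simultaneously. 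Analyticity strips are shrunk according to $h_k=h(1-\eta k^{-2})$, the $r$ Whitney derivatives in $E$ cost $r$ extra factors of a Lipschitz bound for $\rho$ on $\overline{\Omega}_l$, and choosing $\cA>\cA_0(\tau,\nu,r)$ makes the scheme quadratically convergent. Comparison of consecutive conjugations $W_l,W_{l+1}$ and phases $\phi_l,\phi_{l+1}$ reproduces (\ref{prop_error_1})--(\ref{prop_error_3}).

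The main obstacle is part (3). From the iterated conjugation
\[
A^{(n)}(E,\theta)=W_l(E,\theta+n\alpha)\,\widetilde A_l^{(n)}(E,\theta)\,W_l(E,\theta)^{-1},\qquad \widetilde A_l:=R_{\phi_l}(\mathrm{id}+\xi_l),
\]
and the cocycle decomposition $A^{(km_1+r')}=A^{(r')}(\cdot+km_1\alpha)A^{(m_1)}(\cdot+(k-1)m_1\alpha)\cdots A^{(m_1)}$ with $0\leq r'<m_1$, the hypothesis (\ref{1-direction_condition}) yields a uniform angular derivative $\partial_{E}\arg(A^{(n)}(E,\theta)v)\geq c_{*}n$ for every $v\in\mathbb{P}(\R^2)$: the orientation-preserving nature of the $SL(2,\R)$-action on the projective line makes the positive contributions from the $\sim n/m_1$ factors add rather than cancel, because composing two orientation-preserving projective maps having strictly positive infinitesimal angular derivative at a point produces at least the sum of those derivatives. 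Translating this back through the bounded analytic conjugacies $W_l$ and $\Lambda$ gives the lower bound for $\partial_{E}\phi_l^{[n]}$ up to two correction terms: an $O(|\partial_{E}W_l|)=O(1)$ boundary error from the $W_l$-sandwich, absorbed by taking $n>n_0$ large, and a cumulative deviation from $\xi_l$. The delicate point is the latter: naively it grows like $n|\xi_l|_{C^1}$, but choosing $l_0$ large enough that $|\xi_l|_{C^1}<c_{*}/(2m_1)$ means that each projective step under $\widetilde A_l$ deviates from a pure rotation by an angle smaller than what the 1-direction condition contributes, so the per-step positive lower bound survives composition and accumulates to the desired $C_1 n$ uniformly in $\theta\in\T$ and $E\in J\cap\overline{\Omega}_l$.
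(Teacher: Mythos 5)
Your inductive step for parts (1)--(2) --- build $W_{l+1}=W_l\exp(\Psi_l)$ by solving a truncated cohomological equation whose denominators are $\|2k\alpha\pm 2\widehat\phi_l(0)\|$, controlled ``via Proposition~\ref{prop_AFK}'' and the bridge structure --- is not what the paper does, and I do not see it closing as stated. Proposition~\ref{prop_AFK} concerns Birkhoff sums $\phi^{[Q_k]}-Q_k\widehat\phi(0)$, not individual small divisors; for Liouvillean $\alpha$ one has $\|Q_l\alpha\|\sim\overline{Q}_l^{-1}$ with $\overline{Q}_l$ arbitrarily larger than $Q_l$, so your truncated linear equation still contains a divisor that a fixed loss-of-analyticity schedule $h_k$ cannot absorb uniformly over all steps. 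The paper instead runs the AFK renormalisation (Lemmas~\ref{lemma induction}, \ref{lemma iteration}, \ref{lemma conjugation}): iterate $A_l$ exactly $Q_{l+1}$ times so that the renormalised frequency $\bar\alpha=Q_{l+1}\alpha$ is tiny and $\phi_l^{[Q_{l+1}]}$ is near-constant by Proposition~\ref{prop_AFK}, then conjugate by a large but finite number of pointwise algebraic (not Fourier) conjugations as in Lemma~\ref{lemma smooth alg conj}. That device --- which is what lets the scheme run for all irrational $\alpha$ --- is missing from your proposal, so there is a genuine gap in (1)--(2).

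In part (3) the claim that composing orientation-preserving projective maps with positive $\partial_E$-derivatives ``produces at least the sum of those derivatives'' is false: the chain rule gives $\partial_E(f\circ g)(v)=(\partial_E f)(g(v))+(Df)(g(v))\,(\partial_E g)(v)$, and the dynamical derivative $Df$ of an $SL(2,\R)$-projective map can be strictly less than $1$, so the earlier contributions can be suppressed and need not accumulate linearly. The paper gets the linear lower bound leading to \eqref{term 103} precisely because the rotational reducibility \eqref{reduc} yields a uniform lower bound $|D(\Lambda(\cdot)A^{(km_1)}\Lambda^{-1})|\geq|\Lambda|^{-4}c_1^{-4}$ on the weights, turning the weighted chain-rule sum into a genuine sum of $m$ terms each bounded below by $C_2$. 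You invoke $W_l$ and $\Lambda$ only as a source of ``boundary error,'' not as the source of this lower bound, so this crucial step of your argument is unproved. The remainder of your plan for (3) --- isolating the term containing $\partial_E A_l^{(mm_1)}$, killing the $\xi_l$-deviation by taking $l$ large, and interpolating to all $n>n_0$ via the uniform $C^2$ bound on $\phi_l$ --- does match the paper's structure.
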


\noindent{\bf Proof of Proposition \ref{prop reducibility}:}
To prove Proposition \ref{prop reducibility}, the main step is to construct $W_l$ and $\phi_l$ by induction. By letting $l$ tend to infinity, we shall obtain $W$ and $\phi$ as their limits.

Given any constants $\kappa \in (0,1)$,  $D > 1$, we define the sequences
$$\kappa_l:=\frac{\kappa}{l^2}, \quad D_l: = D- \kappa_l, \quad h_l :=  h \prod_{i=1}^{l} (1 - \kappa_i)^2.$$
Let $h_{\infty} := \lim_{l \to \infty} h_l$. It is clear that $h_{\infty} > 0$.
 Let
$U_k := e^{-\overline{Q}_{k}Q_{k}^{-b} - \overline{Q}_{k}^{a}}$,  where $a = \frac{2}{M}$, $b = a^{-1}$, and
$$M  = \max\left(4(\tau+1),\, \frac{4}{1-2\nu} \right), \quad \cA_0 = {\cA}_1(M),$$
with ${\cA}_1(M)$ defined as in Proposition \ref{prop_AFK}.
The inductive step is given by the following lemma.

\begin{lemma}[Inductive lemma] \label{lemma induction}
Given any $\kappa\in(0,1)$, $D>1$, any $r\in\Z_+$, there exists $T_0(h, D, \kappa, \varepsilon, \nu, \tau, r)>0$, $C_0(h, D, \kappa, \varepsilon, \nu, \tau, r )>0$, such that the following is true.

For any $l$ such that $Q_{l} \geq T_0$, if there exist
$\left\{\begin{array}{l}
    W_l:(J \cap \overline{\Omega}_l) \times \T\rightarrow SL(2,\R)\\[1mm]
    \phi_l:(J \cap \overline{\Omega}_l) \times \T\rightarrow \T\\[1mm]
    \xi_l:(J \cap \overline{\Omega}_l) \times \T\rightarrow \M(2,\R)
 \end{array}\right.$, analytic on $(J \cap \overline{\Omega}_l) \times \T_{h_l}$, satisfying $\left|\phi_l - \hat{\phi}_l(0)\right|_{J \cap \overline{\Omega}_l, \, {h_l}} < D_{l}$, $|\xi_l|_{J \cap \overline{\Omega}_l, \, {h_l}} < U_l$, and
 $$A(E,\theta) = W_{l}(E,\theta + \alpha) R_{\phi_l(E,\theta)}\left(id + \xi_l(E,\theta)\right) W_l(E,\theta)^{-1},$$
then there exist
   $\left\{\begin{array}{l}
    B_l:(J \cap \overline{\Omega}_{l+1}) \times \T\rightarrow SL(2,\R)\\[1mm]
    \phi_{l+1}:(J \cap \overline{\Omega}_{l+1}) \times \T\rightarrow \R\\[1mm]
    \xi_{l+1}:(J \cap \overline{\Omega}_{l+1}) \times \T\rightarrow \M(2,\R)
 \end{array}\right.$, analytic on $(J \cap \overline{\Omega}_{l+1}) \times \T_{h_{l+1}}$, such that $W_{l+1} = W_{l}B_l^{-1}$ satisfies
 $$A(E,\theta) = W_{l+1}(E,\theta + \alpha) R_{\phi_{l+1}(E,\theta)}\left(id + \xi_{l+1}(E,\theta)\right) W_{l+1}(E,\theta)^{-1},$$
 with  $|\phi_{l+1} - \hat{\phi}_{l+1}(0)|_{J \cap \overline{\Omega}_{l+1}, \, {h_{l+1}}} < D_{l+1}$ and
 $|\xi_{l+1}|_{J \cap \overline{\Omega}_{l+1}, \, {h_{l+1}}} < U_{l+1}$. Moreover if
 $$(1 - 2^{-l})H_0 > 1+ \max_{ 1 \leq j \leq r }\left(| \partial^j_{E}\phi_l|^{\frac{ r }{j}}_{J \cap \overline{\Omega}_{l}, \, h_l}, \, |\partial^j_{E}\xi_{l}|^{\frac{ r }{j}}_{J \cap \overline{\Omega}_{l},\,  h_l} U_{l}^{-\frac{r}{j}}\right),$$
 for some constant $H_0$,
 then for  $0 \leq j \leq r$ ,
 $$|\partial_{E}^{j}(B_l-id)|_{J \cap \overline{\Omega}_{l+1}, \, h_{l+1}}, \  |\partial_{E}^{j}(\phi_{l+1}-\phi_{l})|_{J \cap \overline{\Omega}_{l+1}, \, h_{l+1}} \leq C_0  U_l^{\frac{1}{3}}  H_0^{\frac{j}{ r }},$$
 and $(1-2^{-l-1})H_0 > 1+ \max_{ 1 \leq j \leq r }\left(| \partial^j_{E}\phi_{l+1}|^{\frac{ r }{j}}_{J \cap \overline{\Omega}_{l+1}, \, h_{l+1}}, \,
 |\partial^j_{E}\xi_{l+1}|^{\frac{ r }{j}}_{J \cap \overline{\Omega}_{l+1},\,  h_{l+1}} U_{l+1}^{-\frac{r}{j}}\right)$.
\end{lemma}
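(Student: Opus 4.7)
The plan is a one-step KAM iteration for $SL(2,\R)$ cocycles, carried out in the Whitney-$C^r$ framework. I would look for a conjugacy $B_l$ close to the identity and a correction $\psi_l := \phi_{l+1} - \phi_l$ such that
$$B_l(\theta + \alpha)^{-1} R_{\phi_l(\theta)}(\mathrm{id} + \xi_l(\theta)) B_l(\theta) = R_{\phi_{l+1}(\theta)}(\mathrm{id} + \xi_{l+1}(\theta)),$$
and then set $W_{l+1} := W_l B_l^{-1}$. To beat the small denominators arising when this equation is linearized, I would first Fourier-truncate $\xi_l$ at a cutoff $N_l \sim \overline{Q}_l^{\,a}$. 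On one hand, analyticity on $\T_{h_l}$ forces the high-mode tail to satisfy $|\xi_l^{\mathrm{hi}}|_{h_{l+1}} \leq C\, U_l\, e^{-(h_l - h_{l+1})N_l}$, which by the choice $a = 2/M$, $b = 1/a$ in the definition of $U_k$ is much smaller than $U_{l+1}$ and can be absorbed into $\xi_{l+1}$; on the other, $N_l$ is small enough that the non-resonance condition defining $\overline{\mathcal{Q}}_{l+1}$ applies to every Fourier mode $|n|\leq N_l$.

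Next I would linearize the conjugation equation at $B_l = \mathrm{id}$, $\psi_l = 0$. Passing to the complex basis that diagonalizes $R_{\phi_l}$ into multiplication by $e^{\pm i \phi_l}$, the linearized equation splits into a diagonal scalar equation (whose zeroth-Fourier obstruction is absorbed into $\psi_l$) and two conjugate off-diagonal cohomological equations
$$y(\theta + \alpha) - e^{\pm 2 i \phi_l(\theta)}\, y(\theta) = f(\theta).$$
Because $\hat\phi_l(0)$ equals the rotation number $\rho(E)$ modulo $\alpha\Z/2$, and the Birkhoff sums $\phi_l^{[Q_k]} - Q_k\,\hat\phi_l(0)$ are controlled by Proposition \ref{prop_AFK}, the Fourier analysis of these equations reduces to a standard divisor problem with denominators $\|n\alpha \pm 2\rho(E)\|$ for $|n|\leq N_l$. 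For $E$ with $\rho(E) \in \overline{\mathcal{Q}}_{l+1}$ the non-resonance bound $\|n\alpha \pm 2\rho\| \geq \varepsilon \max(Q_{l+1}^{-\tau}, \overline{Q}_{l+1}^{-\nu})$ holds for all such $n$, giving a solution $Y_l = \log B_l$ with $|Y_l|_{h_{l+1}} \lesssim U_l^{2/3}$. Substituting back, the Taylor remainder contributes a quadratic error of size $O(U_l^{4/3})$, which together with the exponentially small tail yields $|\xi_{l+1}|_{h_{l+1}} < U_{l+1}$.

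To propagate the Whitney-$C^r$ bounds, I would differentiate the diagonal and off-diagonal cohomological equations in $E$ up to order $r$ and solve them iteratively: each $\partial_E$ brings down a polynomial factor in $N_l$ from the divisor and cross-terms involving lower-order $\partial_E^k \phi_l$ and $\partial_E^k \xi_l$. A standard convexity/interpolation argument combined with the inductive ansatz $(1 - 2^{-l}) H_0 > 1 + \max_{1 \leq j \leq r}\bigl(|\partial_E^j\phi_l|^{r/j},\, |\partial_E^j \xi_l|^{r/j} U_l^{-r/j}\bigr)$ then yields
$$|\partial_E^j(B_l - \mathrm{id})|_{h_{l+1}}, \; |\partial_E^j(\phi_{l+1}-\phi_l)|_{h_{l+1}} \leq C_0\, U_l^{1/3}\, H_0^{j/r},$$
and the slight weakening from $1 - 2^{-l}$ to $1 - 2^{-l-1}$ at step $l+1$ absorbs these new contributions.

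The main obstacle will be the small-divisor analysis in the Whitney-$C^r$ setting: turning the non-resonance condition on $\rho$ into a usable lower bound on $|e^{2\pi i(n\alpha \pm 2\phi_l(\theta))} - 1|$ requires replacing $\phi_l(\theta)$ by $\hat\phi_l(0)$ and then by $\rho(E)$ with errors that must be tracked simultaneously with each $j$-th derivative in $E$. The Birkhoff-sum estimates from Proposition \ref{prop_AFK}, combined with the arithmetic choices $M \geq \max(4(\tau+1),\, 4/(1-2\nu))$ and $\cA > \cA_1(M)$, should provide just enough slack so that the accumulated loss at every derivative level is dominated by $U_l^{1/3}$, closing the induction.
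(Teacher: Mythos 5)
Your proposal takes the classical ``Eliasson-style'' KAM route for cocycles: Fourier-truncate $\xi_l$, linearize, split into scalar and off-diagonal cohomological equations of the form $y(\theta+\alpha)-e^{\pm 2i\phi_l(\theta)}y(\theta)=f(\theta)$, and solve them by Fourier analysis controlled by the small divisors $\|n\alpha\pm 2\rho(E)\|$, $|n|\le N_l$. The paper instead follows the Avila--Fayad--Krikorian renormalization scheme (Lemmas~\ref{lemma iteration}, \ref{lemma conjugation}, \ref{lemma_derivative}): it first forms the $Q_{l+1}$-th iterate $A_l^{(Q_{l+1})}=R_{\phi_l^{[Q_{l+1}]}}(\mathrm{id}+\xi_{(Q_{l+1})})$, so that the effective frequency becomes $\bar\alpha=Q_{l+1}\alpha$ with $\|\bar\alpha\|\le 1/\overline{Q}_{l+1}$ tiny; it then runs $N\sim \delta h\varrho^{2r+1}/\|\bar\alpha\|$ steps of the \emph{algebraic} conjugation of Lemma~\ref{lemma smooth alg conj} (which kills the elliptic part of a single near-rotation matrix, no cohomological equation at all), and finally transfers the resulting conjugation from the iterate back to $A_l$ via Lemma~\ref{lemma_derivative}. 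No homological equation in $\theta$ is ever solved.

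This difference is not merely cosmetic: there is a genuine gap in your proposal at the small-divisor step. You assert that for $\rho(E)\in\overline{\mathcal{Q}}_{l+1}$ one has $\|n\alpha\pm 2\rho\|\ge \varepsilon\max(Q_{l+1}^{-\tau},\overline{Q}_{l+1}^{-\nu})$ for all $|n|\le N_l$. But $\overline{\mathcal{Q}}_{l+1}$ is defined only by the finitely many conditions $\|2Q_k\rho\|>\varepsilon\max(Q_k^{-\tau},\overline{Q}_k^{-\nu})$, $k\le l+1$; it controls the rotation number against the \emph{specific} denominators $Q_k$, not against $n\alpha$ for generic $n$. In the Liouvillean setting (which is exactly the regime the paper is built to handle), the quantities $\|n\alpha\pm 2\rho\|$ for $n\sim q_k$ can be made arbitrarily small regardless of $\rho$, because $\|q_k\alpha\|\sim 1/q_{k+1}$ can decay super-exponentially, and the non-resonance condition on $\rho$ gives no lower bound on $\|n\alpha\pm 2\rho\|$ for all modes up to the truncation $N_l$. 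This is precisely the obstruction that forced \cite{AFK} to abandon the cohomological equation and replace it by renormalization: after iterating $Q_{l+1}$ times the problem becomes a conjugation problem for a nearly $\theta$-independent matrix at a nearly-zero frequency, and the accumulated angular loss over $N$ algebraic steps is controlled by $N\|\bar\alpha\|/(\delta h)$, not by small divisors at individual Fourier modes. Your proposed scheme would work for Diophantine $\alpha$ with a Diophantine non-resonance set, but it does not close the induction under the hypotheses of the lemma, which are designed to cover Liouvillean frequencies.
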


\noindent {\bf Proof of Lemma \ref{lemma induction}.}
Let $r$ given as Lemma \ref{lemma induction}.
Lemma \ref{lemma induction} is a consequence of the following three lemmas.

\begin{lemma} \label{lemma iteration}[Lemma 4.5 of \cite{AFK}] Let $\phi_l, \xi_l$ be given as in Lemma \ref{lemma induction}
 and denote $A_l = R_{\phi_l}(id + \xi_l) $.
There exists $T = T(h, D, \kappa, \varepsilon, \nu, \tau, r )$ such that
if $Q_l \geq T$, then, on $\overline{\Omega}_{l+1}$,
 $A_{l}^{(Q_{l+1})}$ can be expressed as $R_{\phi_l^{[Q_{l+1}]}}(id + \xi_{(Q_{l+1})})$ with
 \begin{eqnarray*}
&&\left|\phi_l^{[Q_{l+1}]} - \widehat{ \phi_l^{[Q_{l+1}]} }(0) \right|_{J \cap \overline{\Omega}_{l+1}, \, h_l(1-\kappa_{l+1})}  \leq D,  \\
 &&  |(R_{2\phi_l^{[Q_{l+1}]}} - id)^{-1}|_{J \cap \overline{\Omega}_{l+1}, \, h_l(1-\kappa_{l+1})} < \varrho_l^{-1} := \frac{\varepsilon}{4(Q_{l+1}^{-\tau} + \overline{Q}_{l+1}^{-\nu})} < U_l^{-\frac{1}{40r}} , \\
&&|\xi_{(Q_{l+1})}|_{J \cap \overline{\Omega}_{l+1}, \, h_l(1-\kappa_{l+1})} \leq U_{l}^{\frac{1}{2}}  .
 \end{eqnarray*}
Moreover, if for $\overline{H} > 0$ we have $\overline{H} > 1+ \max_{ 1 \leq j \leq r }\left(| \partial^j_{E}\phi_l|^{\frac{ r }{j}}_{J \cap \overline{\Omega}_{l}, \, h_l},\, |\partial^j_{E}\xi_{l}|^{\frac{ r }{j}}_{J \cap \overline{\Omega}_{l},\,  h_l} U_{l}^{-\frac{r}{j}}\right)$, then

$$\overline{H} Q_{l+1}^r \gtrsim 1+  \max_{ 1 \leq j \leq r }\left( \left|\partial_{E}^j\phi_l^{[Q_{l+1}]}\right|^{\frac{ r }{j}}_{J \cap \overline{\Omega}_{l+1}, \, h_l(1-\kappa_{l+1})},\, |\partial_E^j \xi_{(Q_{l+1})}|^{\frac{ r }{j}}_{J \cap \overline{\Omega}_{l+1}, \, h_l(1-\kappa_{l+1})} U_{l}^{-\frac{r}{2j}}  \right).$$

\begin{proof}
The first three inequalities is contained in Lemma 4.5 in \cite{AFK}. The last inequality follows from direct computation. Here we have used the definition of $M$ and Proposition \ref{prop_AFK}.
\end{proof}
\end{lemma}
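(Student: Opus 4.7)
The plan would be to borrow the first three bullets verbatim from Lemma 4.5 of \cite{AFK} and derive the new derivative estimate by a Leibniz expansion of the product $A_l^{(Q_{l+1})}$.

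First, I would observe that the hypotheses of Lemma \ref{lemma induction} and the choice $M=\max(4(\tau+1),4/(1-2\nu))$ are arranged so that the non-resonance condition defining $\overline{\Omega}_{l+1}$, together with Proposition \ref{prop_AFK} applied to $\phi_l-\hat\phi_l(0)$, yields both $|\phi_l^{[Q_{l+1}]}-Q_{l+1}\hat\phi_l(0)|\leq D$ and $\|2\phi_l^{[Q_{l+1}]}\|_{\T}\geq\tfrac{\varepsilon}{2}\max(Q_{l+1}^{-\tau},\overline{Q}_{l+1}^{-\nu})$, giving the first two bullets. The identity $A_l^{(Q_{l+1})}=\prod_{k=0}^{Q_{l+1}-1}R_{\phi_l(\cdot+k\alpha)}(\id+\xi_l(\cdot+k\alpha))$ would then let me factor out the pure rotation $R_{\phi_l^{[Q_{l+1}]}}$ and bound the remainder by $Q_{l+1}U_l\leq U_l^{1/2}$, the last inequality using admissibility $Q_{l+1}\leq\overline{Q}_l^{\cA^{21}}$ and the choice $U_l=e^{-\overline{Q}_l Q_l^{-b}-\overline{Q}_l^a}$.

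For the new derivative bound on $\phi_l^{[Q_{l+1}]}$, termwise differentiation of the Birkhoff sum gives $|\partial_E^j\phi_l^{[Q_{l+1}]}|\leq Q_{l+1}|\partial_E^j\phi_l|\leq Q_{l+1}\overline{H}^{j/r}$ by the hypothesized bound on $\overline{H}$; raising to the power $r/j$ yields $|\partial_E^j\phi_l^{[Q_{l+1}]}|^{r/j}\leq Q_{l+1}^r\overline{H}$ since $j\leq r$. For $\xi_{(Q_{l+1})}$, my idea is to expand
\[
G:=A_l^{(Q_{l+1})}=\prod_{k=0}^{Q_{l+1}-1}\bigl(R_{\phi_l(\cdot+k\alpha)}+R_{\phi_l(\cdot+k\alpha)}\xi_l(\cdot+k\alpha)\bigr)
\]
as a sum of $2^{Q_{l+1}}$ terms grouped by the number $m$ of factors containing $\xi_l$. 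Each term with $m\geq 1$ is pointwise of size at most $U_l^m$ and, under Leibniz differentiation (at most $\lesssim Q_{l+1}^j$ distributions of $j$ derivatives among the $Q_{l+1}$ factors, with $|\partial_E^i\phi_l|$ and $U_l^{-1}|\partial_E^i\xi_l|$ both bounded by $\overline{H}^{i/r}$), is bounded by $C_j Q_{l+1}^j\overline{H}^{j/r}U_l^m$. Summing $\binom{Q_{l+1}}{m}$ such terms over $m\geq 1$ gives $|\partial_E^j(G-R_{\phi_l^{[Q_{l+1}]}})|\leq C_j Q_{l+1}^{j+1}\overline{H}^{j/r}U_l\leq C_j Q_{l+1}^j\overline{H}^{j/r}U_l^{1/2}$. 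A Fa\`a di Bruno bound for $\partial_E^a R_{-\phi_l^{[Q_{l+1}]}}$ combined with $\xi_{(Q_{l+1})}=R_{-\phi_l^{[Q_{l+1}]}}(G-R_{\phi_l^{[Q_{l+1}]}})$ would then yield $|\partial_E^j\xi_{(Q_{l+1})}|\leq C_j Q_{l+1}^j\overline{H}^{j/r}U_l^{1/2}$, so $|\partial_E^j\xi_{(Q_{l+1})}|^{r/j}U_l^{-r/(2j)}\lesssim Q_{l+1}^r\overline{H}$ as required.

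The main obstacle is verifying that the polynomial-in-$Q_{l+1}$ blow-up from differentiating a product of $Q_{l+1}$ factors is absorbed by the smallness of $U_l$ at the chosen scale. This is precisely where the exponents $a=2/M$ and $b=M/2$ entering $U_l$ must be calibrated against the admissibility gap $Q_{l+1}\leq\overline{Q}_l^{\cA^{21}}\ll\exp(\tfrac12\overline{Q}_l^a)=U_l^{-1/2}$; once this hierarchy is secured, the loss of one factor of $U_l^{1/2}$ in the combinatorial sum is harmless.
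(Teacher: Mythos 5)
Your plan follows the same route the paper intends: cite Lemma 4.5 of \cite{AFK} for the first three estimates and derive the derivative bound from an expansion of the product $A_l^{(Q_{l+1})}$, using the definition of $M$ and Proposition \ref{prop_AFK}. But there is a genuine gap in the combinatorial step. You assert that a term with $m\geq 1$ insertions of $\xi_l$ is \emph{pointwise} of size at most $U_l^m$, treating the rotation factors as if they have norm $1$. That is true on $\T$ but not on the complex strip $\T_{h_l(1-\kappa_{l+1})}$, where each individual $R_{\phi_l(\cdot+k\alpha)}$ has norm of order $e^D$. A product of $Q_{l+1}$ such factors is then naively of size $e^{DQ_{l+1}}$; since $Q_{l+1}$ can be as large as $\overline{Q}_l^{\cA^{21}}$ while $U_l^{-1/2}=e^{\frac12(\overline{Q}_lQ_l^{-b}+\overline{Q}_l^a)}$ with $a=2/M$ small, this factor is \emph{not} absorbed by the smallness of $U_l$ — the hierarchy you invoke at the end controls $Q_{l+1}$ against $U_l^{-1/2}$, not $e^{DQ_{l+1}}$ against $U_l^{-1/2}$.

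The missing idea is precisely the Birkhoff-sum cancellation that also underlies the first displayed inequality: consecutive rotations between two $\xi_l$-insertions must be regrouped into a single rotation $R_{\phi_l^{[b-a]}(\cdot+a\alpha)}$, and the second item of Proposition \ref{prop_AFK} ensures $|\phi_l^{[b-a]}-(b-a)\hat{\phi}_l(0)|\lesssim D$ uniformly for $0\leq b-a\leq Q_{l+1}$ on the strip; the remaining $(b-a)\hat{\phi}_l(0)$ is real and contributes only a unit-norm phase. Only after this grouping is the $m$-insertion term bounded by $C(D)^{m+1}U_l^m$, and only then is the Leibniz count (now over $m$ factors of $\xi_l$ and $m+1$ rotation \emph{blocks}, each contributing $\lesssim Q_{l+1}^{j}\overline{H}^{j/r}$ from the Birkhoff-sum derivatives) valid. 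With this repair the arithmetic you describe goes through and gives the stated estimate, so the gap is fixable, but the assertion "each term with $m\geq1$ is pointwise of size at most $U_l^m$" is not correct as written on the relevant domain.
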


\smallskip


\begin{lemma} \label{lemma conjugation}
For every $D$, $h_{*} > 0$, there exist $\epsilon = \epsilon( D, h_{*})>0$, $C_0 = C_0(D, h_{*})>0$
such that the following is true.

Given any $\epsilon_0 \in (0, \epsilon)$, $h > h_{*}$, $0 < \delta < 1$, $\bar{\alpha} \in \R \setminus \Q$, given some open interval $J_0\subset\R$, and
$\left\{\begin{array}{l}
     \bar{A}: J_0 \times \T\rightarrow SL(2,\R) \\[1mm]
     \bar{\varphi}: J_0 \times \T\rightarrow\R
   \end{array}\right.$, analytic on $J_0 \times \T_h$, with
   $$|\bar{\varphi} - \widehat{\bar{\varphi}}(0)|_{J_0,\, h} \leq D, \quad \varrho^{-1}:= \max(4, |(R_{2\bar{\varphi}} -id)^{-1}|_{J_0,\, h} ) < \epsilon_0^{-\frac{1}{20r}}, \quad |R_{-\bar{\varphi}}\bar{A} -id|_{J_0,\, h}  < \epsilon_0.$$
Denote $\bar{\xi} = R_{-\bar{\varphi}}\bar{A} -id$. Then there exist $\left\{\begin{array}{l}
     B: J_0 \times \T\rightarrow SL(2,\R) \\[1mm]
     \varphi: J_0 \times \T\rightarrow\T
   \end{array}\right.$, analytic on $J_0 \times \T_{e^{-\delta/5}h}$, such that for $$\tilde{\bar{A}}(E,\theta):= B(E,\theta + \bar{\alpha}) \bar{A}(E,\theta) B(E,\theta)^{-1}$$
   and any constant
$H \geq 1+ \max_{1 \leq j \leq r }\left( |\partial_{E}^{j}\bar{\varphi}|^{\frac rj}_{J_0, \,h}, \,|\partial^j_{E}\bar{\xi}|^{\frac{r}{j}}_{J_0, \,h}\epsilon_0^{-\frac r j}\right) $,
we have
\begin{eqnarray}
|\partial_{E}^j(B-id)|_{J_0, \, e^{-\delta/5}h}&<&C_0 \epsilon_0 \rho^{-j-1} H^{\frac{j}{r}}, \quad j=0,\cdots, r, \label{lemma 6 term 101}\\
|R_{-\varphi}\tilde{\bar{A}} -id|_{J_0, \, e^{-\delta/5}h}&<& C_0  \epsilon_0    e^{-\frac{\delta h \varrho^{2r+1}}{C_0 \|\bar{\alpha}\|}}, \label{lemma 6 term 102} \\
|\mathcal{Q}(\partial_{E}^{j}\tilde{\bar{A}})|_{J_0, \, e^{-\delta/5}h}&<&  C_0  \epsilon_0  e^{-\frac{\delta h \varrho^{2r+1}}{C_0 \|\bar{\alpha}\|}}H^{\frac{j}{r}}, \quad j=1,\cdots, r. \label{lemma 6 term 103}
\end{eqnarray}
where $\mathcal{Q}(P) := \frac{P + JPJ}{2}$ for any $P \in \M(2,\R)$,  with $J := \left(\begin{array}{cc}
                                                                                    0 & -1 \\
                                                                                    1 & 0
                                                                                  \end{array}\right)$.
\end{lemma}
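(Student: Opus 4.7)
\textbf{Proof plan for Lemma \ref{lemma conjugation}.}
The plan is to carry out a single KAM conjugation step: find $B$ close to the identity which, after truncating the high Fourier modes and killing the hyperbolic $\mathcal{Q}$-direction via a cohomological equation, reduces $\bar{A}$ to a cocycle that is rotation-like up to an exponentially small error, while the resulting estimates on $B$ incorporate the $C^r$-control through the constant $H$. First I would write $\bar{A} = R_{\bar{\varphi}}(id + \bar{\xi})$ and search for $B = e^{Y}$ with $Y: J_0\times\T \to sl(2,\R)$ small. Expanding the conjugation $B(\theta+\bar\alpha)\bar A(\theta) B(\theta)^{-1}$ to first order in $Y$, the obstruction to killing $\bar{\xi}$ becomes a linearised cohomological equation of the schematic form
\begin{equation*}
 R_{\bar{\varphi}(\theta)}^{-1}\,Y(\theta+\bar\alpha)\, R_{\bar{\varphi}(\theta)} - Y(\theta) \;=\; -\bar{\xi}(\theta) \;+\; (\text{rotational part}),
\end{equation*}
the rotational part being absorbed into a correction of $\bar{\varphi}$ to form the new angle $\varphi$.

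The next step is to split $sl(2,\R)$ according to the projector $\mathcal{Q}$: its image consists of symmetric traceless matrices (on which conjugation by $R_{\bar\varphi}$ acts by the doubled rotation $R_{2\bar\varphi}$), while its kernel is spanned by $J$ (on which the action is trivial). On the $\mathcal{Q}$-component the cohomological operator is invertible with norm controlled by $|(R_{2\bar\varphi}-id)^{-1}| \leq \varrho^{-1}$; on the kernel direction one simply absorbs the zero Fourier mode into the new $\varphi$, so that the rotation number is corrected. Before solving I would truncate $\bar\xi$ at Fourier level $N$ so that the tail on the thinner strip $\T_{e^{-\delta/5}h}$ is $\lesssim \epsilon_0 e^{-N\delta h/5}$; the resulting linear equation has a unique analytic solution $Y$ on $\T_{e^{-\delta/5}h}$ with $|Y|_{J_0,e^{-\delta/5}h} \lesssim \varrho^{-1}\epsilon_0$. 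Choosing $N$ of the order $\delta h \varrho^{2r+1}/(C_0\|\bar\alpha\|)$, the tail contribution and the quadratic-in-$Y$ remainder (both of size $\varrho^{-2}\epsilon_0^2$ plus the tail) fit inside the target bound $C_0\epsilon_0 e^{-\delta h \varrho^{2r+1}/(C_0\|\bar\alpha\|)}$ of (\ref{lemma 6 term 102}) and (\ref{lemma 6 term 103}); the denominator $\|\bar\alpha\|$ enters through the standard loss coming from the shift $\theta \mapsto \theta + \bar\alpha$ on the analytic strip.

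For the $C^r$-estimate (\ref{lemma 6 term 101}) and the quantitative versions of (\ref{lemma 6 term 102})--(\ref{lemma 6 term 103}), the idea is a standard Cauchy-differentiation trick. Since $H$ dominates (powers of) all $E$-derivatives of $\bar\varphi$ and $\bar\xi$ up to order $r$ on $\T_h$, one applies Cauchy's estimates on a complex $E$-disk of radius $\sim H^{-1/r}$ to reduce higher-order $E$-derivative estimates to $C^0$-bounds; this produces precisely the $H^{j/r}$ factors. Iterating these $C^0$-bounds for $\partial_E^j Y$, $\partial_E^j \varphi$ and $\mathcal{Q}(\partial_E^j \tilde{\bar A})$ through the cohomological equation (which is linear, hence commutes with $\partial_E$) yields the claimed dependencies in $H$, with the small-divisor factor entering only via an extra power of $\varrho^{-1}$ per derivative — encoded in the $\rho^{-j-1}$ in (\ref{lemma 6 term 101}).

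The main obstacle I expect is the careful bookkeeping in Step 3: balancing the Fourier truncation scale $N$ against the small divisor $\varrho^{-1}$, the analyticity loss $\delta h/5$ and the number of $E$-derivatives $r$, so that propagating the $\varrho^{-1}$-loss through $r$ Cauchy estimates yields exactly the exponent $2r+1$ in $\varrho^{2r+1}$ rather than a worse power, and that the hypothesis $\varrho^{-1} < \epsilon_0^{-1/(20r)}$ is just enough to keep the quadratic remainder $\varrho^{-2}\epsilon_0^2$ below $\epsilon_0^{1+1/10}$ — leaving ample room for $\epsilon_0^{1/3}$-type bounds needed later in Lemma \ref{lemma induction}. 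The rest is routine Neumann-series manipulations together with the standard passage from $B = id + Y + O(Y^2)$ back to a group element in $SL(2,\R)$ via $B = \exp Y$ (or equivalently a single Newton correction), whose errors are absorbed by the quadratic gain.
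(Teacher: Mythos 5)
Your plan is the classical Eliasson-type KAM step: linearize, solve a cohomological equation by Fourier series with a truncation at scale $N$, and use Cauchy estimates in $E$ to get the $H^{j/r}$ factors. The paper's proof is fundamentally different: it performs no Fourier truncation and solves no cohomological equation. Instead it applies Lemma~\ref{lemma smooth alg conj} \emph{pointwise in $\theta$} to produce an exact algebraic conjugation $B_{(i)}(\theta)A_{(i)}(\theta)B_{(i)}(\theta)^{-1}=R_{\varphi_{(i+1)}(\theta)}$ (a Newton-type iteration in $SL(2,\C)$ using only the algebraic divisor $(R_{2\varphi}-id)^{-1}$, no Fourier divisors), then conjugates the cocycle by $B_{(i)}$, which reintroduces a \emph{new} error $B_{(i)}(\cdot+\bar\alpha)B_{(i)}(\cdot)^{-1}-id$ that is small by Cauchy's estimate in $\theta$ because $\|\bar\alpha\|$ is small. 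Iterating this $N\sim\delta h\varrho^{2r+1}/\|\bar\alpha\|$ times produces the exponential factor $e^{-\delta h\varrho^{2r+1}/(C_0\|\bar\alpha\|)}$ via geometric decay $C_2^{-N}$, not via a Fourier tail.

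There are two concrete reasons your route would not go through in this setting. First, the cohomological equation you write down is not Fourier-diagonalizable: on the $\mathcal{Q}$-component it reads $R_{2\bar\varphi(\theta)}Y(\theta+\bar\alpha)-Y(\theta)=f(\theta)$ with $\bar\varphi(\theta)$ genuinely $\theta$-dependent and of order $D$ (only $|\bar\varphi-\widehat{\bar\varphi}(0)|\le D$ is assumed, not that it be small), so the coefficient $R_{2\bar\varphi(\theta)}$ couples Fourier modes and the standard mode-by-mode inversion does not apply. One cannot first straighten $\bar\varphi$ to a constant either, as that requires solving $\psi(\cdot+\bar\alpha)-\psi=\bar\varphi-\widehat{\bar\varphi}(0)$, whose small divisors $e^{2\pi in\bar\alpha}-1$ are uncontrollably small here: $\bar\alpha$ in this lemma is not Diophantine but is a \emph{small} number produced by renormalization (think $\bar\alpha=Q_{l+1}\alpha$ with $\|Q_{l+1}\alpha\|\lesssim 1/\overline{Q}_{l+1}$), which is exactly why the algebraic-conjugation method of \cite{AFK} is used instead. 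Second, the Cauchy-estimate mechanism you propose for the $H^{j/r}$ factors runs in the wrong direction: Cauchy estimates deduce derivative bounds on a smaller disk from $C^0$ bounds on a larger one, whereas here you would need to pass from $C^r$ bounds on the real interval $J_0$ to a $C^0$ bound on a complex $E$-disk of radius $\sim H^{-1/r}$, which the hypotheses do not provide (analyticity in $E$ is not assumed on any quantified complex neighborhood). The paper instead tracks $\partial_E^j$-bounds by hand through the iteration (the explicit inductions \eqref{lem 5 term 2}--\eqref{lem 5 term 32} in Lemma~\ref{lemma smooth alg conj} and \eqref{lem xx term 1}--\eqref{lem xx term 2} in the main argument), which is what makes the powers of $\varrho^{-1}$ and of $H^{1/r}$ come out as claimed.
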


We leave the proof of Lemma \ref{lemma conjugation} to the appendix.

By Lemma \ref{lemma iteration}, we can
apply Lemma \ref{lemma conjugation} to $\bar A=A_l^{(Q_{l+1})}$, $\bar\alpha=Q_{l+1}\alpha$ and $\bar\varphi=\phi_l^{[Q_{l+1}]}$, with $J_0$ a connected component of $J \bigcap \overline{\Omega}_{l+1}$, $\epsilon_0=U_l^{\frac12}$, $h = h_l(1 - \kappa_{l+1})$, $h_{*} = h_{\infty}$, $H = c_{0} H_0Q_{l+1}^{r}$ for some large absolute constant $c_{0} >0$, and $\delta = - \log (1 - \kappa_{l+1})$, we obtain $B_ l = B$. The upper bound of $|\partial_{E}^{j}(B_l-id)|_{J \cap \overline{\Omega}_{l+1}, \, h_{l+1}}$ follows from \eqref{lemma 6 term 101}, $\rho^{20r} \geq \epsilon_0$ and $T_0 \leq Q_{l+1} \leq \overline{Q}_{l}^{\cA^{21}}$.

\begin{lemma}\label{lemma_derivative}
Let $h>h_*$, $D$, $\delta$, $\epsilon_0$, $\varrho$, $H$, $\bar{\alpha}$, $\bar{A}$, $B$, $\varphi$, $\tilde{\bar{A}}$ be given as in Lemma \ref{lemma conjugation}.
For any $\alpha \in \R \setminus \Q$ , there exists $D_0(h_*,D) > 0$ such that the following is true.

For any
$G:J_0 \times \T\rightarrow SL(2,\R)$, which is analytic on $J_0 \times \T_h$ and
satisfying $|G|_{J_0, \,h} \leq D$,  $G = R_{\zeta}(id + \xi)$ for some
$\left\{\begin{array}{l}
\zeta: J_0 \times \T\rightarrow \T \\[1mm]
\xi: J_0 \times \T\rightarrow \M(2,\R)
\end{array}\right.$, analytic on $J_0 \times \T_h$, with $|\xi|_{J_0, h} < \epsilon_0$, and $$H>1 + \max_{1 \leq j \leq r} \left( |\partial_{E}^j\zeta|^{\frac rj}_{J_0, \, h}, |\partial_E^{j}\xi|^{\frac r j}_{J_0, \, h}\epsilon_0^{-\frac{r}{j}}\right).$$
Moreover we have
$G(\theta + \bar{\alpha}) \bar{A}(\theta) = \bar{A}(\theta + \alpha) G(\theta)$. Then there exist
$\left\{\begin{array}{l}
\tilde\zeta: J_0 \times \T\rightarrow \T \\[1mm]
\tilde\xi: J_0 \times \T\rightarrow \M(2,\R)
\end{array}\right.$, analytic on $J_0 \times \T_{e^{-\delta}h}$, such that $\tilde{G}(\theta) := B(\theta + \alpha) G(\theta) B(\theta)^{-1}$ can be expressed as $\tilde{G} = R_{\tilde{\zeta}}(id + \tilde{\xi})$. Moreover, for $j=0,\cdots, r$, we have
\begin{equation}\label{estim_zeta_xi}
|\partial_{E}^{j}(\tilde{\zeta} - \zeta)|_{J_0, \, e^{-\delta}h} < D_0 H^{\frac{j}r}\epsilon_0, \quad |\partial_{E}^j \tilde{\xi}|_{J_0, \, e^{-\delta}h}< D_0H^{\frac{j}{r}}  e^{-\frac{\delta h \varrho^{2r+1}}{D_0  \|\bar{\alpha}\|}}.
\end{equation}
\end{lemma}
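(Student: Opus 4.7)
The strategy is to exploit the twisted commutation $G(\theta+\bar{\alpha})\bar{A}(\theta) = \bar{A}(\theta+\alpha)G(\theta)$, which forces the same conjugation $B$ that straightens $\bar{A}$ into a near-rotation to simultaneously straighten $G$. First, a direct substitution confirms that the intertwining passes to the conjugated objects:
\[
\tilde{G}(\theta+\bar{\alpha})\,\tilde{\bar{A}}(\theta) = \tilde{\bar{A}}(\theta+\alpha)\,\tilde{G}(\theta).
\]
Since $|B-id|$ is controlled by \eqref{lemma 6 term 101} on $\T_{e^{-\delta/5}h}$, the product $\tilde{G} = B(\theta+\alpha)R_{\zeta(\theta)}(id+\xi(\theta))B(\theta)^{-1}$ stays close to a rotation and admits a unique polar-type decomposition $\tilde{G} = R_{\tilde{\zeta}}(id+\tilde{\xi})$ with $\tilde{\xi}$ taking values in the $\mathcal{Q}$-complement of the rotation subalgebra of $\M(2,\R)$.

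Next I substitute this decomposition, together with $\tilde{\bar{A}} = R_\varphi(id+\tilde{\bar{\xi}})$ (where $|\tilde{\bar{\xi}}| = |R_{-\varphi}\tilde{\bar{A}} - id|$ is exponentially small by \eqref{lemma 6 term 102}), into the preserved intertwining and split with respect to $\ker\mathcal{Q}\oplus\mathrm{range}\,\mathcal{Q}$, using the identity $R_\varphi X R_{-\varphi} = R_{2\varphi}X$ for $X$ anticommuting with $J$. The rotation (kernel) component reduces to the cohomological equation
\[
\tilde{\zeta}(\theta+\bar{\alpha}) - \tilde{\zeta}(\theta) = \varphi(\theta+\alpha) - \varphi(\theta) + \mathrm{error}_1(\theta),
\]
while the analogous computation for the unconjugated data gives $\zeta(\theta+\bar{\alpha}) - \zeta(\theta) = \bar{\varphi}(\theta+\alpha) - \bar{\varphi}(\theta) + \mathrm{error}_0(\theta)$. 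Subtracting and using the standard KAM bookkeeping $|\varphi-\bar{\varphi}| \lesssim \epsilon_0$ (the rotation angle shifts only by the average of the perturbation, not by $\epsilon_0 \varrho^{-1}$) yields $|\tilde{\zeta} - \zeta| \lesssim \epsilon_0$ without the $\varrho^{-1}$ inflation one would naively get from $|B-id|$. The $\mathcal{Q}$-component of the intertwining, modulo terms quadratic in $\tilde{\xi}$, is the linear equation
\[
(R_{2\varphi(\theta)} - id)\,\tilde{\xi}(\theta) = \mathrm{source}(\theta), \qquad |\mathrm{source}| \lesssim |R_{-\varphi}\tilde{\bar{A}} - id|.
\]
The hypothesis $|(R_{2\varphi}-id)^{-1}| \leq \varrho^{-1} < \epsilon_0^{-1/(20r)}$ inverts this by contraction, and the exponential bound \eqref{lemma 6 term 102} propagates — losing only the polynomial factor $\varrho^{-1}$ — to the claimed exponential smallness of $|\tilde{\xi}|$.

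For the derivative estimates, I differentiate both equations in $E$. The rotation-part equation produces $\partial_E^j(\tilde{\zeta}-\zeta)$ in terms of $\partial_E^j\varphi$ and $\partial_E^j\bar{\varphi}$, all controlled by $H^{j/r}$ via the hypothesis. The $\mathcal{Q}$-part equation, differentiated, has its source driven by $\mathcal{Q}(\partial_E^j\tilde{\bar{A}})$, which by \eqref{lemma 6 term 103} retains the exponential decay up to a factor $H^{j/r}$; an induction on $j$ absorbs the feedback from $\partial_E^{j'}\tilde{\xi}$ with $j' < j$ into the constant $D_0$.

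The main obstacle is the clean algebraic splitting of the intertwining: one must verify that the rotation component genuinely yields a cohomological equation whose right-hand side $\varphi - \bar{\varphi}$ is of order $\epsilon_0$ (and not carrying the $\varrho^{-1}$ factor present in $|B-id|$), and that the $\mathcal{Q}$-component has $R_{2\varphi}-id$ as its only small divisor. The tight calibration of the exponent $\varrho^{2r+1}$ in \eqref{lemma 6 term 102}, produced by Lemma \ref{lemma conjugation}, is precisely what guarantees that the exponential gain survives both the $\varrho^{-1}$ loss from inversion and the Cauchy loss from narrowing the analyticity strip from $\T_{e^{-\delta/5}h}$ to $\T_{e^{-\delta}h}$.
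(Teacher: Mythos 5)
Your proposal correctly identifies the key algebraic input -- the intertwining passes from $(G,\bar A)$ to $(\tilde G,\tilde{\bar A})$, and the $\mathcal Q$-splitting of $\M(2,\R)$ is the right lens -- but the reduction you then perform is wrong in a way that removes the hardest part of the argument. The $\mathcal Q$-projection of $\tilde G(\theta+\bar\alpha)\tilde{\bar A}(\theta)=\tilde{\bar A}(\theta+\alpha)\tilde G(\theta)$ does \emph{not} yield a pointwise algebraic equation $(R_{2\varphi(\theta)}-id)\tilde\xi(\theta)=\mathrm{source}(\theta)$. Using $R_\theta M R_{-\theta}=R_{2\theta}M$ for $M\in\mathrm{range}\,\mathcal Q$, what one actually gets is a \emph{twisted cohomological (cocycle) equation} of the form $\tilde\xi(\theta+\bar\alpha)-R_{\psi(\theta)}\tilde\xi(\theta)=\mathrm{source}(\theta)$ with $\psi(\theta)\approx\varphi(\theta)+\varphi(\theta+\alpha)$; equivalently, $(id-R_{\psi(\theta)})\tilde\xi(\theta)=\mathrm{source}(\theta)-\bigl(\tilde\xi(\theta+\bar\alpha)-\tilde\xi(\theta)\bigr)$. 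The term $\tilde\xi(\theta+\bar\alpha)-\tilde\xi(\theta)$ is first order in $\tilde\xi$, so ``modulo terms quadratic in $\tilde\xi$'' does not dispose of it. The paper handles it by a Cauchy estimate that trades $|\bar\alpha|$ times analyticity loss for a contraction factor, iterated $N=\delta h\varrho^{2r+1}/(C_1|\bar\alpha|)$ times with strip widths $g_n=e^{-\delta/2-n\delta/(2(r+1)N)}h$, and it is precisely this telescoping that both consumes the narrowing from $e^{-\delta/2}h$ to $e^{-\delta}h$ and makes the initial $\sim H^{k/r}$-sized data of $\mathcal Q(\partial_E^k\tilde G)$ collapse to the exponentially small source. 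Your proposal has no analogue of that iteration; ``inverts this by contraction'' is not a substitute, since without the analyticity-narrowing scheme there is no contraction. Two secondary points: the small divisor that actually appears is $R_{\varphi(\theta)+\varphi(\theta+\alpha)}-id$, not $R_{2\varphi(\theta)}-id$, and bounding its inverse by $\varrho^{-1}$ on a (shrunken) strip is a separate lemma (Claim 4.4 of \cite{AFK}, stated here as Lemma \ref{lemma known 2}) needing $|\bar\alpha|<C_5^{-1}\delta h\varrho^2$ -- it is not one of your stated hypotheses.

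For $\tilde\zeta-\zeta$ the situation is similar. The cohomological constraint $(\tilde\zeta-\zeta)(\theta+\bar\alpha)-(\tilde\zeta-\zeta)(\theta)=(\varphi-\bar\varphi)(\theta+\alpha)-(\varphi-\bar\varphi)(\theta)+\mathrm{error}$ together with $|\varphi-\bar\varphi|\lesssim\epsilon_0$ does \emph{not} imply $|\tilde\zeta-\zeta|\lesssim\epsilon_0$: a cohomological equation determines its solution only up to the kernel of the coboundary operator, and extracting a $C^0$ bound from it reintroduces exactly the small-divisor loss you are trying to avoid. The paper does not touch the cohomological equation here at all. It bounds $\tilde\zeta-\zeta$ directly from the definition, via the identity built from $R_{\tilde\zeta-\zeta}(id+\tilde\xi)=R_{-\zeta}B(\cdot+\alpha)R_\zeta(id+\xi)B^{-1}$ (see \eqref{lem 6 term 10}) and the bound on $B-id$ from \eqref{lemma 6 term 101}. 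Your observation that $|\varphi-\bar\varphi|\lesssim\epsilon_0$ is correct and does follow from the proof of Lemma \ref{lemma conjugation}, but it does not close the estimate the way you suggest. In short, the two ingredients your proposal elides -- the cocycle structure with its Cauchy-estimate iteration and the direct estimate of $\tilde\zeta-\zeta$ from the $B$-conjugation -- are precisely the content of the proof, and a sketch that omits them is not a proof.
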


We leave the proof of Lemma \ref{lemma_derivative} to the Appendix \ref{appendix}.

\smallskip

Recall that  $\epsilon_{0} = U_l^{\frac{1}{2}}$. Apply Lemma \ref{lemma_derivative} to $G=A_l$, $\zeta=\phi_l$, $\xi=\xi_l$, with $J_0$ a connected component of $J \bigcap \overline{\Omega}_{l+1}$, we obtain $\phi_{l+1} = \tilde{\zeta}$, $\xi_{l+1} = \tilde\xi$ for Lemma \ref{lemma induction} and establish the desired estimates for $|\partial^{j}_{E}(\phi_{l+1} - \phi_l)|$, $j=0,\cdots, r$, by the first inequality in (\ref{estim_zeta_xi}) and $U_{l}^{1/6} \ll Q_{l+1}^{-r} $. Moreover, the second inequality in (\ref{estim_zeta_xi}) yields that
 $$|\xi_{l+1}|_{J \cap \overline{\Omega}_{l+1}, \, {h_{l+1}}}\leq D_0 e^{-\frac{ h(1-\kappa_{l+1})\kappa_{l+1}  \varrho_l^{2r+1}}{D_0 \|Q_{l+1}\alpha\|}} < U_{l+1},$$
in view of the proof of Proposition 4.7 in \cite{AFK}.

Finally, we verify the last statement in Lemma \ref{lemma induction}. Notice that by the second inequality in \eqref{estim_zeta_xi}, we have,
again by the proof of Proposition 4.7 in \cite{AFK}, for all $1 \leq j \leq r$, $Q_l \geq T_0$ for sufficiently large $T_0$,
\begin{eqnarray*}
|\partial_{E}^{j}\xi_{l+1}|_{J_0\cap \overline{\Omega}_{l+1},\, h_{l+1}} < D_0 H_0^{\frac{j}{r}}( c_0Q_{l+1}^{r} )^{\frac{j}{r}} e^{-\frac{ h(1-\kappa_{l+1})\kappa_{l+1}  \varrho_l^{2r+1}}{D_0 \|Q_{l+1}\alpha\|}} < D_0 H_0^{\frac{j}{r}} U_{l+1}.
\end{eqnarray*}
\qed


\

Now we are ready to prove Proposition \ref{prop reducibility}.
Recall that we have given $0<\kappa<1$, $D>1$ at the beginning of proof and $\tau>0$, $0<\nu<\frac12$, $\varepsilon>0$, $h>0$.
We choose $T_0=T_0(h, D, \kappa, \varepsilon, \nu, \tau, r)>0$ as in Lemma \ref{lemma induction}.
Let $\epsilon_0 < U_{l_0}$, where $l_0$ is the smallest number such that $Q_{l_0} > T_0$ with $(Q_k)$ the $({\cal A},{\cal A},{\cal A}^{22},{\cal A}^{21})-$admissible subsequence.
We can see that $\epsilon_0$ can be chosen only depending on $h$, $\kappa$, $\varepsilon$, $\tau$, $\nu$, $\cA, r$, while independent of the specific choice of the subsequence $( Q_l )$.

Since $ |A - A_0|_{J} < \epsilon_0 $ for some $A_0\in SO(2,\R)$,
let $W_{l}=id$, $R_{\phi_l}=A_0$ and $\xi_l=R_{-\phi_l}A-id$. We see that $\phi_l$ is constant in $\theta$, and $|\xi_l|_{J} < \epsilon_0$.
Then we can apply Lemma \ref{lemma induction} iteratively, starting on $J\cap\overline\Omega_{l_0}$, and obtain the sequences $\{B_l\}$, $\{\phi_{l}\}$ and $\{\xi_l\}$.
In view of Lemma \ref{lemma induction}, we have, for all $l \geq l_0$,
$$|\partial_{E}^{j}(B_l-id)|_{J \cap \overline{\Omega}_{l+1},\, h_{l+1}} < C_0 U_l^{\frac{1}{3}}H^{\frac{j}{r}}, \quad j=0,\cdots r.$$
Thus we can obtain $W$, $\phi$ as the limits of $W_l$, $\phi_l$ respectively as $l\rightarrow\infty$.
By Lemma \ref{lemma induction}, we obtain \eqref{prop_error_2}, \eqref{prop_error_3}, and hence \eqref{prop_error_1}.
Of course we can construct $W_l$ arbitrarily for $l < l_0$ at the cost of enlarging $c_1$ in $(\ref{prop_error_1})-(\ref{prop_error_3})$. Then (1) and (2) are proven.



We now prove (3). Define $C_2>0$ as
$$C_2:=\inf_{v \in \mathbb{P}(\R^2)\atop{E \in J, \ \theta \in \T}} \partial_{E}(\Lambda(\theta + m_1\alpha)A^{(m_1)}(E,\theta)\Lambda(\theta)^{-1} v),$$
Recall that, for each $l \geq 1$,
$A(E,\theta)=W_l(E,\theta+\alpha)A_l(E,\theta) W_l(E,\theta)^{-1}$ with
\begin{equation} \label{term Al}
A_{l}(E,\theta) = R_{\phi_l(E,\theta)}\left(id + \xi_l(E,\theta)\right).
\end{equation}
 So for any $m$, $l \geq 1$, $E \in J \cap \overline{\Omega}_l$, $\theta \in \T$, $v \in \mathbb{P}(\R^2)$, we have, with $\theta_{j}:=\theta+jm_1\alpha$ for $j\in\Z$,
\begin{eqnarray}
& &\partial_{E}(\Lambda(\theta_m)A^{(mm_1)}(E,\theta)\Lambda(\theta)^{-1}v) \nonumber\\
\quad  &= & \partial_{E}(\Lambda W_{l})(E, \theta_m)(A^{(mm_1)}_l(E,\theta) (\Lambda W_{l})(E,\theta)^{-1}v)\label{term1} \\
 &&+ \,  D(\Lambda W_{l})(E, \theta_m)\left(A^{(mm_1)}_l(E,\theta)(\Lambda W_l)(E,\theta)^{-1}v, \partial_{E}A^{(mm_1)}_l(E,\theta)(\Lambda W_l(E,\theta)^{-1}v)\right) \label{term2}\\
 && + \,  D(\Lambda W_{l}(E,\theta_m)A^{(mm_1)}_l(E,\theta))\left( \Lambda W_l(E,\theta)^{-1}v,  \partial_{E}(\Lambda W_l(E,\theta)^{-1})v\right).\label{term3}
\end{eqnarray}

Notice that $|D(\Lambda W_l)|_{J \cap \overline{\Omega}_l} \leq |\Lambda|^2 |W_l|^{2}_{J \cap \overline{\Omega}_l} \leq c_1^2|\Lambda|^2$.
By (2), we see that, for any $l$,
\begin{equation}
|(\ref{term1})|\leq c_1 |\Lambda|^2,\quad |(\ref{term3})|\leq c_1^5|\Lambda|^4|A^{(m m_1)}_l|_{J \cap \overline{\Omega}_l}^2. \label{term 101}
\end{equation}
By (2), for $m \geq 1$,  there exists $l(m) \geq 1$ such that $|A^{(mm_1)}_l|_{J \cap \overline{\Omega}_l} < 2$ for every $l \geq l(m)$.
On the other hand, we have
\begin{equation}
|\eqref{term2}|
\leq c_1^2 |\Lambda|^2 |\partial_EA_l^{(mm_1)}(E,\theta)((\Lambda W_l)(E, \theta)^{-1}v)|.
\label{term 102}
\end{equation}
By \eqref{reduc}, for any $E \in J \cap \overline{\Omega}$, and $k \in \Z$, we have
$$
|D(\Lambda(\theta_k)A^{(km_1)}(E,\theta)\Lambda(\theta)^{-1})| \geq |\Lambda|^{-4} |W|^{-4} \geq  |\Lambda|^{-4} c_1^{-4}.
$$
Thus, for any $E \in J \cap \overline{\Omega}$, $\theta \in \T$, $v \in \mathbb{P}(\R^2)$,
\begin{eqnarray}
|\partial_{E}(\Lambda(\theta_{m})A^{(mm_1)}(E,\theta)\Lambda(\theta)^{-1}v)|
&=& \sum_{k=1}^{m} \left|D(\Lambda(\theta_{m})A^{((m-k)m_1)}(E,\theta_k)\Lambda(\theta_{k})^{-1})(v_{k,1}, \, v_{k,2})\right|  \nonumber \\
&\geq& |\Lambda|^{-4} c_1^{-4} m C_2, \label{term 103}
\end{eqnarray}
with $v_{k,1}:=\Lambda(\theta_{k})A^{(km_1)}(E,\theta) \Lambda(\theta)^{-1}v$ and
$$ v_{k,2}:=\partial_{E}(\Lambda(\theta_{ k}) A^{(m_1)}(E, \theta_{k-1}) \Lambda(\theta_{k-1})^{-1})(v_{k-1,1}),$$
since $v_{k,2} \geq C_2$ for all $k$.

Using \eqref{term Al}, \eqref{term 101}, \eqref{term 102}, \eqref{term 103} and \eqref{prop_error_3}, by taking $m$ sufficiently large, we can ensure that for every $l \geq l(m)$,
$$|\partial_EA_l^{(mm_1)}(E,\theta)((\Lambda W_l)(E, \theta)^{-1}v)| > 1, \quad \forall \, E \in J \cap \overline{\Omega}, \ \theta \in \T, \ v \in \mathbb{P}(\R^2).$$
Then we conclude that $\partial_{E}A^{(mm_1)}_l(E, \theta)v > c_1^{-2}|\Lambda|^{-2}$ for all $E \in J \cap \overline{\Omega}$, $\theta \in \T$ and $v \in \mathbb{P}(\R^2)$. By (2), we see that $|\xi_l|_{C^1(J \cap \overline{\Omega}_l)} \to 0$ as $l\rightarrow\infty$. Thus by \eqref{term Al}, $\partial_{E}\phi_l^{(mm_1)} (E, \theta) > \frac{1}{2}c_1^{-2}|\Lambda|^{-2}$ for all $E \in J \cap \overline{\Omega}$, $\theta \in \T$ and sufficiently large $l \geq 1$. (3) follows from the uniform $C^2$ bound of $\phi_l$ on $J \bigcap \overline{\Omega}_l$ in \eqref{prop_error_1}.
\qed

\section{Growth of the diffusion norm}\label{growth}

Recall the time-dependent Schr\"odinger equation:
\begin{equation}\label{Eq_Schrodinger}
{\rm i}\frac{d}{dt}{u}_n=(L_\theta u)_n=-(u_{n+1}+u_{n-1})+V(\theta+n\alpha)u_n, \quad n\in\Z.
\end{equation}
In this section, we will finish the proof of Theorem \ref{main theorem 2}.

\subsection{Ballistic upper bound}
For Eq. (\ref{Eq_Schrodinger}), we already have the general ballistic upper bound(Lieb-Robinson bound\cite{LR}).
The proof can be found in \cite{AizenmanW} and \cite{DamanikLY}.
For the convenience of readers, we sketch the proof in \cite{DamanikLY}, and give the upper bound of the $p^{\rm th}-$moment of $u(t)$ for any $p>0$ under some suitable condition of $u(0)$.

\begin{thm}\label{ballistic_upper_general}
Given $p>0$. For $u(0)\in{\cal W}^{p'}(\Z)$ with $p'=\left\{\begin{array}{cl}
p,&\frac{p}{2}=\lfloor\frac{p}{2}\rfloor\\[2mm]
2\lfloor\frac{p}{2}\rfloor+2,&\frac{p}{2}>\lfloor\frac{p}{2}\rfloor
\end{array} \right.$,
there is a constant $C_3>0$, depending on $p$ and $\langle u(0)\rangle_{p'}$, such that
\begin{equation}\label{ballistic_upper_bound}
\langle u(t)\rangle_{p}\leq C_3 t^{\frac{p}2}, \quad t \geq 1.
\end{equation}
\end{thm}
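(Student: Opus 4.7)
The plan is a two-step argument. First establish the ballistic bound $\langle u(t)\rangle_{2k}\lesssim t^{k}$ for every positive even integer $p=2k$, and then extend to arbitrary $p>0$ by Hölder interpolation between moments.

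For the even-integer case, denote by $N$ the position operator $(Nu)_{n}:=n\,u_{n}$ and set
\[ f_{k}(t):=\langle u(t)\rangle_{2k}^{2}=\langle u(t),(N^{2k}+I)u(t)\rangle. \]
Since $L$ and $N^{2k}$ are self-adjoint, a formal differentiation in $t$ gives
\[ f_{k}'(t)=\mathrm{i}\langle u(t),[L,N^{2k}]u(t)\rangle. \]
Because $L$ is nearest-neighbour, the matrix element $([L,N^{2k}])_{n,m}$ vanishes unless $|n-m|=1$, in which case it equals $m^{2k}-n^{2k}$, whose modulus is at most $C_{k}(|n|^{2k-1}+1)$ by the mean value theorem. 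Note in particular that the diagonal potential $V$ does not contribute to the commutator. Expanding the resulting quadratic form and applying Cauchy--Schwarz to pair each factor $u_{n}$ with its neighbour $u_{n\pm 1}$ yields
\[ |\langle u,[L,N^{2k}]u\rangle|\leq C_{k}\langle u\rangle_{2k-1}^{2}. \]
Hölder's inequality with conjugate exponents $\tfrac{2k}{2k-1}$ and $2k$ interpolates the intermediate moment
\[ \sum_{n}|n|^{2k-1}|u_{n}|^{2}\leq\Bigl(\sum_{n}|n|^{2k}|u_{n}|^{2}\Bigr)^{\frac{2k-1}{2k}}\Bigl(\sum_{n}|u_{n}|^{2}\Bigr)^{\frac{1}{2k}}. \]
Since the $\ell^{2}$ norm of $u(t)$ is conserved, the above reduces to a Bernoulli-type differential inequality
\[ f_{k}'(t)\leq C_{k}'\bigl(f_{k}(t)^{(2k-1)/(2k)}+1\bigr), \]
which, after the substitution $g(t):=f_{k}(t)^{1/(2k)}$, integrates to $f_{k}(t)\lesssim 1+t^{2k}$. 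Hence $\langle u(t)\rangle_{2k}\lesssim t^{k}$ for $t\geq 1$.

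For a general $p>0$ that is not an even integer, let $p'=2\lfloor p/2\rfloor+2$ be the smallest even integer strictly greater than $p$; the previous step applied to $2k=p'$ gives $\langle u(t)\rangle_{p'}\lesssim t^{p'/2}$. Hölder's inequality with exponents $p'/p$ and $p'/(p'-p)$ then produces
\[ \sum_{n}|n|^{p}|u_{n}|^{2}\leq\Bigl(\sum_{n}|n|^{p'}|u_{n}|^{2}\Bigr)^{p/p'}\Bigl(\sum_{n}|u_{n}|^{2}\Bigr)^{1-p/p'}, \]
so, together with conservation of the $\ell^{2}$ norm, $\langle u(t)\rangle_{p}\leq C_{3}\,t^{p/2}$ as required.

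The chief technical point, and the only genuine obstacle to the above manipulations, is the justification of the formal time-derivative computation: one needs $u(t)\in\mathcal{W}^{p'}(\Z)$ for every $t\geq 0$, so that every quantity appearing in the calculation is finite and the differentiation may be exchanged with the infinite sum. The hypothesis $u(0)\in\mathcal{W}^{p'}(\Z)$ together with the propagation-of-moments statement cited as Theorem 2.22 of \cite{DamanikTch} supplies exactly this regularity. With that ingredient in hand, the remainder is a routine Gronwall integration for the Bernoulli ODE followed by Hölder interpolation.
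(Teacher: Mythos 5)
Your proof is correct, and it uses the same basic device as the paper -- the commutator of $L$ with a power of the position operator -- but it organises the estimate differently. The paper turns the Heisenberg identity into an \emph{integral recursion} $\langle u(T)\rangle_{2m}\leq\langle u(0)\rangle_{2m}+c\int_{0}^{T}\langle u(t)\rangle_{2m-2}\,dt$, deduced from $\|A^{(m)}u\|_{\ell^{2}}\lesssim\langle u\rangle_{2m-2}$, and then proceeds by induction on $m$ with the $\ell^{2}$-conservation law as base case; no interpolation is needed for even $p$. You instead differentiate $f_{k}(t)=\langle u(t)\rangle_{2k}^{2}$ in time, bound the quadratic form $|\langle u,[L,N^{2k}]u\rangle|\lesssim\langle u\rangle_{2k-1}^{2}$, and \emph{close the ODE at a single level} by H\"older-interpolating the $(2k-1)$-moment between the $2k$-moment and the conserved $\ell^{2}$ norm, which produces a Bernoulli inequality $f_{k}'\lesssim f_{k}^{(2k-1)/(2k)}+1$ solved by the substitution $g=f_{k}^{1/(2k)}$. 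Both routes are rigorous and yield the same exponent; yours trades the induction on the moment order for an extra interpolation inside the derivative estimate. A small structural consequence worth noting: the paper's induction simultaneously \emph{establishes} that $e^{-itL}\psi\in\mathcal{W}^{2m}(\Z)$ for all $t$ (starting from $m=1$ via $\ell^{2}$-conservation), whereas your ODE needs $u(t)\in\mathcal{W}^{2k}(\Z)$ as an a priori input to make $f_{k}$ finite and differentiable; you correctly flag this and outsource it to the propagation-of-moments result (Theorem 2.22 of \cite{DamanikTch}), which is legitimate, but one could alternatively run your computation first on the truncated operators $X^{(m)}_{N}$ as the paper does and pass $N\to\infty$, making the argument self-contained. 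Also minor: you need $\|u(0)\|_{\ell^{2}}>0$ to keep $f_{k}^{-(2k-1)/(2k)}$ bounded after the substitution, but the case $u(0)=0$ is trivial. The final interpolation for non-even $p$ is carried out between $p'$ and $0$ in your version, versus between $2\lfloor p/2\rfloor$ and $2\lfloor p/2\rfloor+2$ in the paper; both give $t^{p/2}$ under the hypothesis $u(0)\in\mathcal{W}^{p'}(\Z)$.
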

\clb
\proof
Given $m\in\Z_+$, let $X^{(m)}$ be the multiplication operator from ${\cal W}^{2m}(\Z)$ to $\ell^{2}(\Z)$:  $(X^{(m)} u)_n = n^m u_n$, and for $N\in \Z_+$, we define the bounded operator $X_N^{(m)}$ on $\ell^2(\Z)$:
$$(X_N^{(m)} u)_n=\left\{ \begin{array}{cl}
-N^m u_n,& n\leq -N-1 \\[1mm]
-|n|^m u_n,& -N \leq n\leq 0\\[1mm]
n^m u_n, & 0< n\leq N\\[1mm]
N^m u_n, & n\geq N+1
\end{array} \right. .$$
Let $A^{(m)}_N={\rm i}[L, X^{(m)}_N]={\rm i}(L X^{(m)}_N-X^{(m)}_N L)$. By a straightforward computation, we have
$$(A_N^{(m)} u)_n=\left\{ \begin{array}{cl}
{\rm i}((n-1)^m-n^m)u_{n-1}+{\rm i}((n+1)^m-n^m) u_{n+1}, & |n| \leq N-1 \\[1mm]
{\rm i}(N^m-(N-1)^m)u_{-N+1},& n= - N\\[1mm]
-{\rm i}((N-1)^m-N^m)u_{N-1}, & n= N\\[1mm]
0, & |n|\geq N+1
\end{array} \right. .$$
Note that $(n+1)^m-n^m=\sum_{j=1}^m C^j_{m} n^{m-j}$ for any $n\in\Z$ and the constants $C_m^{j}$ are independent of $n$.
Then we see that $A^{(m)}_N$ converges strongly to the self-adjoint operator from ${\cal W}^{2m-2}(\Z)$ to $\ell^2(\Z)$:
$$(A^{(m)} u)_n={\rm i}((n-1)^m-n^m) u_{n-1}+{\rm i}((n+1)^m-n^m) u_{n+1},$$
as $N\rightarrow\infty$, i.e.,
$$\lim_{N \to \infty} \| A^{(m)}_{N} u - A^{(m)}u \|_{\ell^2(\Z)} = 0, \quad \forall u \in \cH^{2m-2}(\Z).$$
Moreover there is a constant $c'=c'(m)$, such that
$$
\|A^{(m)} u\|_{{\ell}^2(\Z)}, \|A^{(m)}_N u\|_{{\ell}^2(\Z)}\leq c' \lan u \ran_{2m-2},  \quad \forall N \in \Z_{+}.
$$

We consider the Heisenberg time evolution $X^{(m)}_N (t) := e^{{\rm i}tL}X^{(m)}_N e^{-{\rm i}tL}$ on $\ell^2(\Z)$ for $N \in \Z_{+}$.
Since $L$ is a bounded self-adjoint operator,
we can see that $e^{\pm{\rm i}tL}$ are analytic functions of $t$. So $X_N^{(m)} (t)$ is analytic in $t$.
Differentiating with respect to $t$, we have
$$\frac{d}{dt}X^{(m)}_N(t)=e^{{\rm i}tL}({\rm i} L)X_N^{(m)} e^{-{\rm i}tL}+e^{{\rm i}tL}X_N^{(m)}(-{\rm i} L) e^{-{\rm i}tL}=e^{{\rm i}tL}A_N^{(m)} e^{-{\rm i}tL}:=A^{(m)}_N(t).$$
For $T>0$, integrating with respect to $t$ on $[0,T]$, we have
$$X_N^{(m)}(T)-X_N^{(m)}=\int_0^T A^{(m)}_N(t) dt.$$
For any $\psi\in {\cal W}^{2m}(\Z)$, assume that $e^{-{\rm i}tL}\psi\in {\cal W}^{2m-2}$ for any finite $t$. This is true for $m=1$ in view of the $\ell^2-$conservation law.
Then we can define $$A^{(m)}(t) : = e^{{\rm i}tL}A^{(m)} e^{-{\rm i}tL}.$$
Noting that $e^{{\rm i}tL}$ is unitary on $\ell^2(\Z)$, we have $\|A^{(m)}(t)\psi\|_{{\ell}^2(\Z)}\leq c' \lan e^{-{\rm i}tL}\psi\ran_{2m-2}$ for any $t>0$.
Combining with the strong convergence of $A_N^{(m)}(t)$ to $A^{(m)}(t)$, we get, by dominated convergence theorem, that for any $T > 0$, any $\psi \in \cH^{2m}(\Z)$,
$$\lim_{N\rightarrow\infty}(X_N^{(m)}(T)\psi-X_N^{(m)}\psi)=\lim_{N\rightarrow\infty}\int_0^T A^{(m)}_N(t)\psi \, dt=\int_0^T A^{(m)}(t)\psi \, dt,$$
where the limit is taken in space $\ell^2(\Z)$.
So we can define for  $\psi\in {\cal W}^{2m}(\Z)$ that $X^{(m)}(T)\psi :=\lim_{N\rightarrow\infty}X_N^{(m)}(T)\psi$
, with
\begin{eqnarray*}
\lan e^{-{\rm i}TL}\psi \ran_{2m} = \|X^{(m)}(T)\psi\|_{\ell^2(\Z)}&\leq& \|X^{(m)}\psi\|_{\ell^2(\Z)}+\int_0^T \|A^{(m)}(t)\psi\|_{\ell^2(\Z)} \, dt\\
&\leq&\lan \psi \ran_{2m}+c' \int_0^T \lan e^{-{\rm i}tL}\psi \ran_{2m-2}  \, dt.
\end{eqnarray*}

Since for $m=1$, $\lan e^{-{\rm i}tJ}\psi\ran_{2m-2}= \sqrt{2}\|\psi\|_{\ell^2(\Z)}$ for any finite $t$, we can get the linear upper bound for $\lan e^{-{\rm i}tJ}\psi \ran_{2}$. By the induction, for any $m\in\Z_+$, there is a constant $c_m>0$,
depending on $m$ and $\langle\psi\rangle_j$, $j=0,\cdots, m$, such that
$$\lan e^{-{\rm i}TL}\psi \ran_{2m}\leq c_m T^m, \quad \forall \, 1\leq T <\infty.$$
This concludes the proof of (\ref{ballistic_upper_bound}) when $p$ is an even integer.

For any $p\in (2m, 2m+2)$, with $u(0)\in {\cal W}^{2m+2}$, we have
$$\langle u(t) \rangle_{2m}\leq c_m t^{m}, \quad \langle u(t) \rangle_{2m+2}\leq c_{m+1} t^{m+1}.$$
By Cauchy inequality $\langle u(t) \rangle_{p} \leq \langle u(t) \rangle_{2m}^{\frac{2m+2-p}{2}} \langle u(t) \rangle_{2m+2}^{\frac{p-2m}{2}}$, we get (\ref{ballistic_upper_bound}).\qed
%
%

\

Theorem \ref{ballistic_upper_general} gives a stronger upper bound than that in Theorem \ref{main theorem 2}.
The rest part of this section is devoted to show that for any $\eta > 0$
$$\lim_{T\rightarrow\infty}\frac{\lan u(t)\ran^2_p}{t^{p-\eta}}=\infty$$
under the condition of Theorem \ref{main theorem 2}.
Given $\alpha\in\R\setminus\Q$, and $V\in C^\omega(\T,\R)$ such that $L_\theta$ has purely a.c. spectrum for a.e. $\theta\in\T$.
Let $\T_{ac}\subset \T$ denote this full-measure subset.

\smallskip

\subsection{Reducibility for the Schr\"odinger cocycle}

Given any integer $r\geq 3$.
we define the constants $\tau, \tau_0, \tilde{\tau}_0$, $\tau_1$, $\tau_2 > 100$ such that
\begin{equation}\label{tau_1tau_2}
\tilde{\tau}_0 \geq \tau_2 > \frac{10^{7}\tau_0}{\eta}, \quad \tau_0 \geq \tau_1, \quad \tau = \min(\tau_0, \tilde{\tau}_0).
\end{equation}
For any $0 < \nu < \frac12$, we choose $\cA$ such that
\begin{equation}\label{cal_A}
\cA > \max\left( \frac{10^{7}\tilde{\tau}_0}{\eta\tau_1}, \, \frac{10^{7}\tilde{\tau}_0}{\eta\nu}, \, \cA_0(\tau_0, \nu,r), \, \cA_0(\tilde{\tau}_0, \nu,r), \,\cA_1(\max(\tau_0, \tilde{\tau}_0) +1), \, \cA_1(\frac{1}{1-2\nu}) \right),
\end{equation}
with $\cA_0$ given by Proposition \ref{prop reducibility}, $\cA_1$ given by Proposition \ref{prop_AFK}.
Then, according to Lemma \ref{lemma_Qk} and \ref{lemma two sequences},
we can define the two $(\cA, \cA, \cA^{22},\cA^{21})-$admissible subsequences $(Q_{l})$ and $(R_{l})$.

Consider the quasi-periodic Schr\"odinger cocycle $(\alpha, A_{(E,V)})$:
$$
\left(\begin{array}{c}
u_{n+1} \\[1mm]
u_{n}
\end{array}
\right)=A_{(E,V)}(\theta+n\alpha)\left(\begin{array}{c}
u_{n} \\[1mm]
u_{n-1}
\end{array}
\right)  \  {\rm with}  \  A_{(E,V)}(\theta)=\left(\begin{array}{cc}
            -E+V(\theta) & -1 \\[1mm]
            1 & 0
          \end{array}
\right).
$$
Its fibred rotation number $\rho=\rho_{(\alpha, A_{(E,V)})}:\R\rightarrow [0,\pi]$ is monotonic.
Given $\varepsilon>0$, we define ${\Omega}_{l}$, $\overline{\Omega}_{l}$ and $\overline{\Omega}$ for the subsequence $(Q_{l})$ and parameters $(\varepsilon, \tau_0, \nu)$ as in the previous section. We can see that ${\Omega}_{l}$ is a union of at most $2Q_{l}$ intervals.
 Let $\Sigma_0:= \rho^{-1}(\mathcal{Q}_{\alpha}(\varepsilon, \tau, \nu))$.
 It is clear that $\Sigma_0\subset \overline{\Omega}$ since, by \eqref{tau_1tau_2}, $\tau \leq \tau_0$.

\smallskip

For any $E$ such that the Lyapunov exponent vanishes, it is natural to ask if the associated cocycle is analytically conjugated to constant matrix.
As a related concept, ``almost reducibility" was introduce by Eliasson\cite{E92}, and has been developed in \cite{A} and \cite{AFK} for one-frequency models.
Roughly speaking, a cocycle is said to be almost
reducible if, by a sequence of transformations, the transformed cocycles are closer and closer to constant.
\begin{definition}[almost reducible] Given $A_0\in SL(2,\R)$, the cocycle $(\alpha, A_{(E,V)})$ is called almost reducible to $A_0$, if there exist $h>0$, a sequence
$\{\Lambda_n\}_{n\geq0}\subset C^{\omega}(\T, SL(2,\R))$, analytic on $\T_h$,  such that
$$ \lim_{n \to \infty} |\Lambda_n(\cdot + \alpha)^{-1}A_{(E,V)}(\cdot)\Lambda_n(\cdot) - A_0|_h  = 0.$$
\end{definition}

By Theorem 1.4 and Corollary 1.6 of \cite{A}, we have
\begin{lemma}\label{lemma_ar}
For $\alpha\in\R\setminus \Q$ and $V\in C^{\omega}(\T, \R)$ such that $L_{\theta}$ has purely a.c. spectrum for a.e. $\theta$, then for a.e. $E\in\sigma(L)$, the cocycle $(\alpha, A_{(E,V)})$ is almost reducible to some $A_0\in SO(2,\R)$.
\end{lemma}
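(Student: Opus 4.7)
The plan is to deduce the lemma from Kotani theory together with Avila's global theory and Almost Reducibility Theorem for one-frequency analytic $SL(2,\R)$-cocycles. First I would use the assumption that $L_\theta$ has purely a.c.\ spectrum for a.e.\ $\theta$ together with Kotani theory: the essential closure of $\{E : \gamma(E) = 0\}$ coincides with the a.c.\ spectrum of $L_\theta$, so the hypothesis forces $\gamma(E) = 0$ for Lebesgue-a.e.\ $E \in \sigma(L)$.

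Next, invoke Avila's global theory to upgrade this vanishing of the Lyapunov exponent on the real line to \emph{subcriticality}, i.e.\ vanishing of the complexified Lyapunov exponent $\gamma_\epsilon(E)$ on a uniform strip around the real axis. The key ingredient is the quantization of the acceleration $\omega(E) = \lim_{\epsilon \to 0^+}\frac{1}{2\pi\epsilon}(\gamma_\epsilon(E) - \gamma(E))$, which is a non-negative integer for every $E$, and Avila's Theorem 1.4 of \cite{A}, which in particular implies that on each compact energy interval the set of $E$ with $\gamma(E) = 0$ but positive acceleration (the \emph{critical} energies) is at most countable, hence of zero Lebesgue measure. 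Combined with Step 1, this yields subcriticality of $(\alpha, A_{(E,V)})$ for a.e.\ $E \in \sigma(L)$.

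Finally, apply Avila's Almost Reducibility Theorem (Corollary 1.6 of \cite{A}): every subcritical one-frequency analytic cocycle is almost reducible. To see that the limit matrix $A_0$ can be taken in $SO(2,\R)$, I would observe that the conjugated cocycles converge uniformly on a strip $\T_h$ to the constant $A_0$ and inherit vanishing Lyapunov exponent; the only non-elliptic $SL(2,\R)$-limit compatible with $\gamma = 0$ is a parabolic matrix, which can be excluded here (or conjugated by a constant matrix absorbed into the sequence $\Lambda_n$) so that $A_0$ is in fact elliptic, i.e.\ conjugate to an element of $SO(2,\R)$. Since the statement allows the auxiliary constant conjugation to be folded into $\Lambda_n$, we may take $A_0 \in SO(2,\R)$ directly.

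There is really no genuine obstacle inside the present paper: the lemma is a bookkeeping consequence of deep results of Avila. The substantive difficulty hidden in the citation is Avila's proof that subcritical cocycles are almost reducible, which relies on the full strength of the acceleration theory and a delicate renormalization/KAM scheme; here I only need to verify that the a.c.\ assumption plus Kotani places a.e.\ energy in $\sigma(L)$ into the subcritical regime where these results apply.
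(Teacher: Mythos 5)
Your proposal is essentially the same argument as the paper, which dispenses with the lemma by directly citing Theorem~1.4 and Corollary~1.6 of~\cite{A}; you are simply expanding the chain of implications behind that citation (Kotani $\Rightarrow$ $\gamma=0$ a.e.\ on $\sigma(L)$; global theory $\Rightarrow$ critical energies have measure zero, so a.e.\ such $E$ is subcritical; ART $\Rightarrow$ almost reducible). Two small inaccuracies to flag: the measure-zero (indeed zero Hausdorff dimension) statement for the critical set is a result of the global theory in~\cite{A2} rather than~\cite{A}, so the attribution should be adjusted; and the parenthetical ``\emph{or conjugated by a constant matrix absorbed into the sequence $\Lambda_n$}'' cannot apply to a parabolic $A_0$, since a parabolic element of $SL(2,\R)$ is not conjugate to any element of $SO(2,\R)$ --- the correct route, which you do also state, is to exclude parabolic limits outright because they occur only at gap edges, a countable hence null set, after which the elliptic $A_0$ is conjugated to a rotation by a constant matrix that is absorbed into $\Lambda_n$.
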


For more on reducibility or almost reducibility for $SL(2,\R)$ and $sl(2,\R)$ cocycles, and the development of KAM theory in this context,
we can refer to  \cite{DinaburgS, FK, HY, YZ}.

\

According to Lemma \ref{lemma_ar}, for any $E_0$ in a co-null set of $\Sigma_0$, there exists $h=h(E_0)$ such that
we can find $\Lambda \in C^{\omega}(\T, SL(2,\R))$, analytic on $\T_h$, satisfying
$$|\Lambda(\cdot + \alpha)^{-1} A_{(E_0,V)}(\cdot)\Lambda(\cdot) - A_0|_h< \frac{\epsilon_0}{2},$$
where $\epsilon_0 = \epsilon_0(\tau,\nu,\varepsilon, h, {\cA}) $ is given in Proposition \ref{prop reducibility}.
Then there exists an interval $J$ containing $E_0$ such that for every $E \in J$,
 $$|\Lambda(\cdot + \alpha)^{-1}A_{(E,V)}(\cdot)\Lambda(\cdot) - A_0|_h < \epsilon_0.$$
With the $(\cA, \cA, \cA^{22}, \cA^{21})-$admissible subsequence $(Q_l)$ given in the beginning of this section, by applying Proposition \ref{prop reducibility} to the cocycle $ (\alpha, \Lambda(\cdot+ \alpha)^{-1}A_{(\cdot,V)}(\cdot)\Lambda(\cdot))$ parametrised by $E\in J$,
we can find
$\left\{\begin{array}{l}
W:(J\cap\overline\Omega)\times \T \to  SL(2,\R)\\[1mm]
\phi:(J\cap\overline\Omega) \times \T \to  \T
\end{array}\right.$, such that
$$A_{(E,V)} = \Lambda(\cdot + \alpha)W(E, \cdot+ \alpha) R_{\phi(E,\cdot)} W(E,\cdot)^{-1} \Lambda^{-1}, \quad \forall E \in J \cap \overline{\Omega}.$$
Thus for every $E\in J \cap \overline{\Omega}$, the corresponding generalized eigenvector is in $\ell^\infty(\Z)$.

For any $\theta \in \T_{ac}$, we define $e_0 = e_0(\theta, u(0))$ as the Lebesgue measure of the subset
\begin{equation}\label{subset_e0}
\left\{E \in \sigma(L) \left| \begin{array}{l}
                                                   E \in \rho^{-1} (\mathcal{Q}_{\alpha}(\varepsilon, \tau, \nu) = \Sigma_0, \ {\rm and \ there \ exists \ a \ generalised \ eigenvector} \\[1mm]
                                                g(E)\in\ell^\infty(\Z) \  {\rm of} \ L_{\theta} \ {\rm such \ that } \ \sum_{n} u_n(0)g_n(E) \neq 0
                                                 \end{array}\right.\right\}.
\end{equation}
By choosing $\varepsilon$ small, combining with the absolute continuity of spectrum, we can ensure that $e_0>0$.

\smallskip

Note that $E_0$ is chosen arbitrarily in a co-null set of $\Sigma_0$, we can find finitely many open intervals $\{J_k\}_{1 \leq k \leq K}$ with $\{cl\{J_k\}\}$ mutually disjoint and $|\Sigma_0 \setminus \cup_{k} J_k | < \frac{e_0}{4}$, such that there is a constant $h > 0$ and $\left\{\begin{array}{l}
           \Lambda:(\cup_{k} J_k)\times \T \to  SL(2,\R)\\[1mm]
           A_0: \cup_{k} J_k \to  SO(2,\R)
          \end{array}\right.$ satisfying
\begin{itemize}
\item both of $\Lambda$ and $A_0$ are constant on each $J_k$;
\item $\Lambda$ is analytic on $\T_h$ for any $E\in\cup_{k} J_k$;
\item $|\Lambda(\cdot + \alpha)^{-1}A_{(E,V)}(\cdot)\Lambda - A_0|_{\cup_k J_k, \,  h}< \epsilon_0$.
\end{itemize}
Indeed, we can choose a compact set $\Sigma_1 \subset \Sigma_0$ such that $\Sigma_1$ contains only almost reducible energy and $|\Sigma_0 \setminus \Sigma_1| < \frac{e_0}{8}$. For each $E_0 \in \Sigma_1$, we choose the interval $J$ containing $E_0$ described as above. Then by compactness, we can choose finitely many such intervals to cover $\Sigma_1$. Then we choose a disjoint finer covering and choose $A_0(E,\cdot)$, $\Lambda(E,\cdot)$  according to which interval $E$ originally belongs to. Then we adjust $h$ accordingly.

Apply Proposition \ref{prop reducibility}(1) to $\Lambda(\cdot + \alpha)^{-1}A_{(\cdot,V)} \Lambda$ on each $J_k$,
we get $W$ and $\phi$.
Then we can define
$$Z: \left(\left(\cup_k J_k\right) \cap \overline{\Omega}\right)\times \T \to  SL(2,\R) \;  {\rm with} \; Z(E, \cdot) := \Lambda(\cdot) W(E, \cdot). $$
By Proposition \ref{prop reducibility}(2), we get the approximants $W_l$ and $\phi_l$.
So we also define
$$Z_l: \left(\left(\cup_k J_k\right) \cap \overline{\Omega}_l \right)\times \T \to  SL(2,\R) \;  {\rm with} \; Z_l(E, \cdot) := \Lambda(\cdot) W_l(E, \cdot).$$
By the monotonicity of the Schr\"odinger operators, it is direct to check that the condition (\ref{1-direction_condition}) in Proposition \ref{prop reducibility}(3) is satisfied with $\Lambda$ and $A(E,\cdot) = \Lambda(\cdot +\alpha)^{-1}A_{(E, V)}\Lambda$ given as above with $m_1=2$, thus we can find $n_0$, $l_0\in\N$, $C_1  > 0$ such that for $n>n_0$, $l>l_0$, $1 \leq k \leq K$
\begin{equation}\label{partial_phi_1}
\partial_E \phi_{l}^{[n]}(E,\theta) > C_1 n, \quad \forall \,  E \in J_k \cap \overline{\Omega}_l, \  \theta \in \T.
\end{equation}
Moreover there exists $c_1 > 0$, for any $l \geq 1$, any $n\in\Z$, any $1\leq k \leq K$,
\begin{equation}\label{partial_phi_2}
\left|\phi_{l}^{[n]}\right|_{C^r(J_k\cap \overline{\Omega}_{l})}\leq c_1 |n|.
\end{equation}

%

\begin{rema}
We have $(\cup_{k} J_k) \cap \overline{\Omega} \subset \sigma(L)$ by the (rotational)  reducibility.
\end{rema}

\begin{rema}\label{rmq_tilde}
Recall that we have constructed another $(\cA, \cA, \cA^{22}, \cA^{21})-$admissible sequence $(R_l)$ in Lemma \ref{lemma two sequences}.
We can define the subsets $\Gamma_l$, $\overline{\Gamma}_l$ and $\overline{\Gamma}$ for $(R_l)$ and parameters $(\varepsilon, \tilde{\tau}_0, \nu)$ in the same way as  $\Omega_l$, $\overline{\Omega}_l$ and $\overline{\Omega}$. We have $\Sigma_0 \subset \overline{\Gamma}$ since by \eqref{tau_1tau_2} $\tau \leq \tilde{\tau}_0$.
Then we can apply Proposition \ref{prop reducibility} to $(R_l)$ on $J_k\cap \overline{\Gamma}$(and $J_k\cap \overline{\Gamma}_l$), and get the conjugation matrix $\tilde W$, angle function $\tilde{\phi}$ and their approximants $\tilde{W}_l$, $\tilde{\phi}_l$. So we can define $\tilde Z(E, \cdot) := \Lambda(\cdot) \tilde W(E, \cdot)$, $\tilde Z_l(E, \cdot) := \Lambda(\cdot) \tilde W_l(E, \cdot)$.
\end{rema}

\smallskip
The following measure estimate is based on the estimates obtained in Proposition \ref{prop reducibility} and the monotonic property of Schr\"odinger cocycles.

\begin{lemma}\label{cor_measure}
There exists $C_4> 0$ such that for sufficiently large $l$, we have $$|(\cup_k J_k ) \cap (\overline{\Omega}_{l} \setminus \overline{\Omega})|  \leq C_4\max(Q_{l+1}^{-\tau_0}, \overline{Q}_{l+1}^{-\nu}).$$
\end{lemma}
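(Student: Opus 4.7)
The plan is to use the derivative bound from Proposition \ref{prop reducibility}(3) to transfer Lebesgue measure estimates from the rotation number $\rho$ to the energy $E$, and then to sum geometrically over the scales $m > l$ at which the first resonance occurs. First I would write the disjoint decomposition
$$
(\cup_k J_k) \cap (\overline{\Omega}_l \setminus \overline{\Omega}) \;=\; \bigsqcup_{m > l}\bigl((\cup_k J_k) \cap \overline{\Omega}_{m-1} \cap \Omega_m^c\bigr),
$$
indexed by the smallest scale $m = m(E) > l$ at which $E \notin \Omega_m$. On each piece the resonance condition $\|2 Q_m \rho(E)\| \leq \delta_m := \varepsilon\max(Q_m^{-\tau_0}, \overline{Q}_m^{-\nu})$ holds, and the set of such $\rho \in \T$ consists of $2Q_m$ intervals of total length $2\delta_m$.

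Next I would establish two quantitative facts on $\overline{\Omega}_{m-1} \cap J_k$ for $m-1 > l_0$. Averaging \eqref{partial_phi_1} over $\theta \in \T$ and using $\int_\T \phi_{m-1}^{[n]}(E,\theta)\,d\theta = n\,\hat{\phi}_{m-1}(0)(E)$ gives the Whitney derivative bound $\partial_E \hat\phi_{m-1}(0) > C_1$. On the other hand, the conjugation $A_{(E,V)} = \Lambda W_{m-1}(\cdot{+}\alpha) R_{\phi_{m-1}}(id + \xi_{m-1}) W_{m-1}^{-1}\Lambda^{-1}$, the super-exponential bound \eqref{prop_error_3} for $\xi_{m-1}$, and the continuity of the fibered rotation number together give
$$|\rho(E) - \hat\phi_{m-1}(0)| \lesssim e^{-\tfrac12 \overline{Q}_{m-1}^{D_1}} \quad \text{on } \overline{\Omega}_{m-1}.$$
Hence for $E$ in the $m$-th piece one has $\|2Q_m\hat\phi_{m-1}(0)(E)\| \leq \delta_m + 2Q_m \cdot e^{-\frac12\overline{Q}_{m-1}^{D_1}}$, and the correction is negligible because $Q_m \leq \overline{Q}_{m-1}^{\cA^{21}}$ by admissibility. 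Since $\partial_E \hat\phi_{m-1}(0) > C_1$ makes $E \mapsto \hat\phi_{m-1}(0)$ Lipschitz invertible on each connected component of $J_k \cap \overline{\Omega}_{m-1}$, and since the $J_k$'s are disjoint with $\hat\phi_{m-1}(0)$ monotone, the preimage of a set in $\T$ of measure $\lesssim \delta_m$ has Lebesgue measure $\lesssim C_1^{-1}\delta_m$; this yields $|(\cup_k J_k) \cap \overline{\Omega}_{m-1} \cap \Omega_m^c| \leq 4C_1^{-1}\delta_m$.

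It remains to sum. Checking the $(\cA,\cA,\cA^{22},\cA^{21})$-admissibility case-by-case one sees $Q_{k+1} \geq Q_k^{\cA}$ always, hence $Q_m, \overline{Q}_m \geq Q_{l+1}^{\cA^{m-l-1}}$ for $m \geq l+1$. Since $\nu\cA \gg \tau_0$ by \eqref{cal_A}, the tail $\sum_{m \geq l+2}\delta_m \leq \sum_{j\geq 1}\varepsilon Q_{l+1}^{-\nu\cA^j}$ is negligible compared to $\delta_{l+1} = \varepsilon\max(Q_{l+1}^{-\tau_0}, \overline{Q}_{l+1}^{-\nu})$, and the final bound follows. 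The main obstacle I expect is the quantitative approximation $|\rho - \hat\phi_{m-1}(0)| \lesssim e^{-\frac12\overline{Q}_{m-1}^{D_1}}$: one needs a rate sharp enough that $Q_m$ times this error is dwarfed by $\delta_m$. The interplay between the super-exponential smallness of $\xi_{m-1}$ in \eqref{prop_error_3} and the only super-exponential (in $\overline{Q}_{m-1}$) size of $Q_m$ allowed by admissibility is precisely what makes this work.
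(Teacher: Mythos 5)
Your proposal follows the same overall template as the paper: decompose by first-resonance scale, transfer the measure estimate from rotation-number space back to energy space via the derivative lower bound from Proposition~\ref{prop reducibility}(3), and sum geometrically using $Q_{k+1} \geq Q_k^{\cA}$. The variation you make --- working with the mean $\hat\phi_{m-1}(0)$ rather than the Birkhoff sum $\phi_m^{[Q_{m+1}]}$, and deriving $\partial_E\hat\phi_{m-1}(0) > C_1$ by averaging over $\theta$ --- is a clean and correct simplification. The one place you hand-wave is precisely the place you flag: asserting $|\rho - \hat\phi_{m-1}(0)| \lesssim e^{-\frac12\overline{Q}_{m-1}^{D_1}}$ from ``continuity of the fibered rotation number'' is not enough, since continuity alone gives no rate. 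The estimate is nonetheless true and elementary (in fact with linear rather than square-root dependence on $\|\xi_{m-1}\|$): the one-step displacement in the lift of $(\alpha, R_{\phi_{m-1}}(id+\xi_{m-1}))$ is $\phi_{m-1}(\theta)/(2\pi) + O(\|\xi_{m-1}\|)$ pointwise, and the rotation number is the Birkhoff average of this along the full dynamics, hence differs from $\hat\phi_{m-1}(0)/(2\pi)$ by $O(\|\xi_{m-1}\|)$. The paper instead passes to the iterated cocycle $A_m^{(Q_{m+1})}$ using Claim 4.6 of \cite{AFK}, compares $\phi_m^{[Q_{m+1}]}$ with $Q_{m+1}\hat\phi_m(0)$ via Proposition~\ref{prop_AFK}, and with $Q_{m+1}\rho$ via an intermediate-value argument; this roundaboutness is driven by the fact that Proposition~\ref{prop reducibility}(3) is phrased for Birkhoff sums. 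One point left implicit in your argument (and in the paper): $\rho$ is the rotation number of the original Schr\"odinger cocycle, while $\hat\phi_{m-1}(0)$ approximates the rotation number of the \emph{conjugated} cocycle, which differ by $(\deg Z_{m-1})\,\alpha$. This shift is harmless because $\|Q_m\alpha\| \leq \overline{Q}_m^{-1}$ is dwarfed by $\delta_m \geq \varepsilon\overline{Q}_m^{-\nu}$ with $\nu < 1$, but a careful write-up should say so.
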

\proof Recall that by applying Proposition \ref{prop reducibility} to $\Lambda(\cdot + \alpha)^{-1}A_{(E,V)}(\cdot)\Lambda(\cdot)$ on each $J_k$, with parameters $\varepsilon, \tau_0, \nu$, we obtain $W_m$, $\phi_m$ as above with
$$A_{m}(E,\cdot) =  W_m(E, \cdot + \alpha)^{-1} \Lambda(\cdot + \alpha)^{-1} A_{(E,V)} \Lambda(\cdot) W_m(E, \cdot) = R_{\phi_m(E,\cdot)}(id + \xi_{m}(E,\cdot) ).$$

By Claim 4.6 in \cite{AFK}, we have, for any $\theta\in\T$, and $1 \leq l \leq Q_{m+1}$,
$$A^{(l)}_{m}(E, \theta) = R_{\phi_m(E,\theta+(l-1)\alpha)}(id+\xi_m(E,\theta+(l-1)\alpha))\cdots R_{\phi_m(E,\theta)}(id+\xi_m(E,\theta))$$
can be expressed as $R_{\phi^{[l]}_m(E,\theta)}(id+\xi^{(l)}(E,\theta))$ such that for $1 \leq l \leq Q_{m+1}$,
 $$|\xi^{(l)}(E,\cdot)|_{J \bigcap \overline{\Omega}_l, h_l}\leq \exp\left\{{\sum_{k=0}^{l-1} |\xi_m(E,\cdot+k\alpha)|}\right\}-1 < 10 c_1Q_{m+1}e^{-\overline{Q}_{m}^{D_1}}$$
For $l=Q_{m+1}$, by Proposition \ref{prop_AFK} (apply to $\eta = 1$, $h = h_l > h_{\infty}$) and \eqref{cal_A}, \eqref{partial_phi_2}, when $m$ is sufficiently large, $C = C(h_{\infty}, 1/2, \cA)$ in Proposition \ref{prop_AFK}
$$\left|\phi_{m}^{[Q_{m+1}]} - Q_{m+1}\hat\phi_{m}(0)\right|_{ J_k  \cap \overline{\Omega}_m} < C \cdot c_1(Q_{m+1}^{-\tau_0 -1}, \overline{Q}_{m+1}^{-2\nu})  < \frac{\varepsilon}{100}\max(Q_{m+1}^{- \tau_0} , \overline{Q}_{m+1}^{-\nu}).$$

Note that the fibered rotation number of the cocycle $(Q_{k+1}\alpha, A^{(Q_{k+1})})$ is $Q_{k+1}\rho$.
Then for each $E \in J_k$, there exists $\theta \in \R$ such that
\begin{eqnarray*}
\widehat{A_{m}^{[Q_{m+1}]}}(E, \theta) - \theta = Q_{m+1}\rho(E),
\end{eqnarray*}
where $\widehat{A_{m}^{[Q_{m+1}]}}(E, \cdot) : \R \to \R$ denotes a lift of the projective map $A_{m}^{[Q_{m+1}]}(E, \cdot):\T \to \T$.
Hence
$$\left|\phi_{m}^{[Q_{m+1}]} - Q_{m+1}\rho\right|_{ J_k  \cap \overline{\Omega}_m}  \leq \frac{\varepsilon}{10}\max(Q_{m+1}^{- \tau_0} , \overline{Q}_{m+1}^{-\nu}) + 10 c_1Q_{m+1}e^{-\overline{Q}_{m}^{D_1}}.$$
Thus $J_k \cap (\overline{\Omega}_m \setminus \overline{\Omega}_{m+1}) \subset \left\{ E \in J_k \cap \overline{\Omega}_m | \|2 \phi_{m}^{[Q_{m+1}]}(E,\cdot)\| \leq 2\varepsilon \max( Q_{m+1}^{-\tau_0}, \overline{Q}_{m+1}^{-\nu}) \right\}$. Moreover by Proposition \ref{prop reducibility}(3), we have $c_1 Q_{m+1} \geq \partial_{E} \phi_{m}^{[Q_{m+1}]}(E, \cdot) > C_1 Q_{m+1}$ for $E \in J_k \cap \overline{\Omega}_m$ for any sufficiently large $m$.
Hence
$$\left|(\cup_k J_k) \cap (\overline{\Omega}_m \setminus \overline{\Omega}_{m+1})\right|\leq \frac{c_1}{2C_1}\max(Q_{m+1}^{-\tau_0}, \overline{Q}_{m+1}^{-\nu}).$$
 Thus $|(\cup_k J_k ) \cap (\overline{\Omega}_{l} \setminus \overline{\Omega})|  \leq C_4\max(Q_{l+1}^{-\tau_0}, \overline{Q}_{l+1}^{-\nu})$ for $C_4 = \frac{c_1}{C_1}$ when $l$ is sufficiently large.
\qed

\subsection{A measure supported on the spectrum with smoothing properties}

\begin{prop}\label{prop_psi}
There is a numerical constant $c_2>0$, and a sequence of $C^{\infty}$ functions $\{\psi_{(l)}\}_{l \geq 1}$ satisfying that $0 \leq \psi_{(l)} \leq 1$ and  $|\psi_{(l)}|_{C^{2}} \leq c_2 Q_{l}^{\tau_1}$, such that
$B\left(supp(\psi_{(1)}), Q_1^{-\tau_0} \right) \subset \overline{\Omega}_{1}$,
and for $l\geq 2$, with
$\psi_{l}:=\prod_{k=1}^l \psi_{(k)}$,
\begin{equation}\label{properties}
B\left(supp(\psi_{(l)}), Q_l^{-\tau_0} \right) \subset \overline{\Omega}_{l} \cap supp(\psi_{l-1}), \quad \left| \{ E \in \overline{\Omega}_l \cap supp(\psi_{l-1}) | \psi_{(l)}(E) \neq 1\} \right| \leq c_2 Q_{l}^{-\frac{\tau_1}{4}}.
\end{equation}
Moreover, with $\psi := \prod_{l=1}^{\infty}\psi_{(l)}$, we have $supp(\psi)\subset \overline{\Omega}$ and
\begin{equation}\label{error_psi}
|\psi - \psi_{l}|_{L^{1}((\cup_k J_k) \cap \overline{\Omega})} \leq c_2( Q_{l+1}^{-\frac{\tau_1}{4}} + \overline{Q}_{l+1}^{-\frac{\nu}{2}}).
\end{equation}
\end{prop}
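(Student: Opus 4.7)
The plan is to build $\psi_{(l)}$ inductively by mollifying the indicator of a suitably shrunk version of $S_l := \overline{\Omega}_l \cap supp(\psi_{l-1})$ (with the convention $\psi_0 \equiv 1$). Fix a nonnegative $C^\infty$ bump $\rho$ supported in $[-\tfrac12,\tfrac12]$ with $\int\rho = 1$, and choose scales $\epsilon_l := Q_l^{-\tau_1/2}$ (mollification) and $\delta_l := \epsilon_l + Q_l^{-\tau_0}$ (shrinking). Set $S_l^\circ := \{E : B(E,\delta_l) \subset S_l\}$ and define $\psi_{(l)} := \chi_{S_l^\circ} \ast \rho_{\epsilon_l}$, where $\rho_{\epsilon_l}(\cdot) := \epsilon_l^{-1}\rho(\cdot/\epsilon_l)$. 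Then $0 \leq \psi_{(l)} \leq 1$ is immediate, $|\psi_{(l)}|_{C^2} \lesssim \epsilon_l^{-2} = Q_l^{\tau_1}$ follows from $\psi_{(l)}'' = \chi_{S_l^\circ} \ast \rho_{\epsilon_l}''$, and the support inclusion $B(supp(\psi_{(l)}), Q_l^{-\tau_0}) \subset S_l$ holds because $\delta_l \geq \epsilon_l/2 + Q_l^{-\tau_0}$.

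For the measure estimate, $\psi_{(l)}(E) = 1$ whenever $B(E, \epsilon_l/2) \subset S_l^\circ$, so $\{\psi_{(l)} \neq 1\} \cap S_l$ sits in a $(\delta_l + \epsilon_l/2)$-neighborhood of $\partial S_l$. The text records that $\overline{\Omega}_l$ is a union of at most $2 Q_l$ intervals; a simple induction using $Q_{k+1} \geq Q_k^{\cA}$ (guaranteed by $(\cA, \cA, \cA^{22}, \cA^{21})$-admissibility) then shows that $supp(\psi_{l-1})$ consists of at most $c \sum_{k < l} Q_k \ll Q_l$ intervals, so $\partial S_l$ has $O(Q_l)$ endpoints and $|\{\psi_{(l)} \neq 1\} \cap S_l| \lesssim Q_l \cdot Q_l^{-\tau_1/2} \leq c_2 Q_l^{-\tau_1/4}$, thanks to $\tau_1 > 100$. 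The bulk of the technical work lies precisely in this interval count; everything else is routine, and the admissibility assumption was chosen exactly to make this induction close.

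For the $L^1$ estimate, every $E \in (\cup_k J_k) \cap \overline{\Omega}$ lies in every $\overline{\Omega}_k$, so combining $\psi - \psi_l = \psi_l(1 - \prod_{k>l}\psi_{(k)})$ with the telescoping bound $1 - \prod_{k>l}\psi_{(k)} \leq \sum_{k>l}(1 - \psi_{(k)})$ and $supp(\psi_l) \subset supp(\psi_{k-1})$ for $k > l$ yields
$$ |\psi - \psi_l|_{L^1((\cup_k J_k) \cap \overline{\Omega})} \leq \sum_{k>l} |\{\psi_{(k)} \neq 1\} \cap \overline{\Omega}_k \cap supp(\psi_{k-1})| \leq c_2 \sum_{k>l} Q_k^{-\tau_1/4}, $$
and the super-geometric growth $Q_{k+1} \geq Q_k^\cA$ collapses this sum to $\lesssim Q_{l+1}^{-\tau_1/4}$, well inside the stated bound (the extra $\overline{Q}_{l+1}^{-\nu/2}$ is simply absorbed). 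Finally, $supp(\psi) \subset \overline{\Omega}$ is immediate since $supp(\psi) \subset \bigcap_l supp(\psi_l) \subset \bigcap_l \overline{\Omega}_l = \overline{\Omega}$.
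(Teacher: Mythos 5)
Your proposal is correct in essence and takes the same overall route as the paper, with a streamlined realization of the construction: where the paper builds a bump function by hand on each sufficiently long connected component of $\overline{\Omega}_l \cap \mathrm{supp}(\psi_{l-1})$ and discards the short ones, you mollify the indicator of the shrunk set $S_l^\circ$ at scale $\epsilon_l = Q_l^{-\tau_1/2}$. The heart of both arguments is the same: a count of $O(Q_l)$ (times a polynomial in $l$) connected components, the scale choice $Q_l^{-\tau_1/2}$ against the $\tau_1>100$ margin, and $\tau_0 \ge \tau_1$ to dominate the $Q_l^{-\tau_0}$ shrinking by the mollification scale. One genuine difference: in the $L^1$ estimate you correctly note that since the integral is over $\overline{\Omega} = \cap_m \overline{\Omega}_m$, the measure bound on $\overline{\Omega}_l \setminus \overline{\Omega}$ from Lemma~\ref{cor_measure} that the paper invokes is not needed and the $\overline{Q}_{l+1}^{-\nu/2}$ term can be dropped; this is a small but real tightening of the paper's argument.

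There is, however, a slip in the telescoping step. You write $\mathrm{supp}(\psi_l) \subset \mathrm{supp}(\psi_{k-1})$ for $k>l$, but since $\psi_{k-1} = \psi_l \prod_{l < j \le k-1} \psi_{(j)}$ the inclusion runs the other way, $\mathrm{supp}(\psi_{k-1}) \subset \mathrm{supp}(\psi_l)$. As a result the term-by-term bound
\begin{equation*}
\bigl|\{E \in \overline{\Omega} : \psi_l(E)\neq 0,\ \psi_{(k)}(E)\neq 1\}\bigr| \;\leq\; \bigl|\{E \in \overline{\Omega}_k \cap \mathrm{supp}(\psi_{k-1}) : \psi_{(k)}(E)\neq 1\}\bigr|
\end{equation*}
does not follow as stated. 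The repair is to organize by the \emph{smallest} index $m>l$ with $\psi_{(m)}(E)\neq 1$: for that $m$ one has $\psi_{m-1}(E)=\psi_l(E)\neq 0$, so $E\in \overline{\Omega}_m \cap \mathrm{supp}(\psi_{m-1})$, and the sets indexed by $m$ cover $\{\psi_l \neq \psi\}\cap \overline{\Omega}$ without the need for the inclusion you invoked. (The paper's set inclusion implicitly relies on the same observation.) With this fix, your bound $|\psi-\psi_l|_{L^1} \lesssim \sum_{m>l} Q_m^{-\tau_1/4} \lesssim Q_{l+1}^{-\tau_1/4}$ goes through, and the rest of the proposal is sound.
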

\proof At first, we focus on these connected components of $\overline{\Omega}_{1}={\Omega}_{1}$. Since ${\Omega}_{1}$ is a union of at most $2Q_{1}$ intervals, for those intervals of length $<4Q_{1}^{-\frac{\tau_1}2}$, the measure of their union is less than $$8Q_{1}^{1-\frac{\tau_1}2} < \frac{1}{2}Q_{1}^{-\frac{\tau_1}{4}}.$$
On each interval of length $\geq 4Q_{1}^{-\frac{\tau_1}2}$, denoted by $J$, we can construct a $C^{\infty}$ function $\psi^{(1)}_J$ supported on $J$, satisfying $0 \leq \psi^{(1)}_J \leq 1$ and $|\psi^{(1)}_J|_{C^2}\leq c_2Q_{1}^{\tau_1}$, such that
$$B\left(supp(\psi^{(1)}_J), Q_{1}^{-\tau_0}\right) \subset J,\quad \left|\{ E \in J | \psi^{(1)}_{J}(E) \neq 1\}\right|\leq 4Q_{1}^{-\frac{\tau_1}2}.$$
Note that here we have used the relation $\tau_0 \geq \tau_1$.
Let $\psi_{(1)}:= \sum_{J} \psi^{(1)}_J$ where $J$ runs over all the connected components of length $\geq4Q_{1}^{-\frac{\tau_1}2}$.

Assume that we have already constructed $\psi_{(1)}$, $\cdots$, $\psi_{(l)}$ satisfying the above properties in (\ref{properties}). Moreover, we assume that for each $1 \leq k \leq l$, $supp(\psi_{(k)})$ is a union of at most $2kQ_{k}$ intervals. Then we can see that $supp(\psi_{l})$ has at most $2l^2Q_{l}$ connected components.
Since ${\Omega}_{l+1}$ is a union of at most $2Q_{l+1}$ intervals, we can see $\overline{\Omega}_{l+1} \cap supp(\psi_{l})$
 has at most $2(l+1)Q_{l+1}$ connected components by noting that $Q_{l+1} > l Q_{l}$.

We focus on these connected components of $\overline{\Omega}_{l+1} \cap supp(\psi_{l})$. For those intervals of length $<4Q_{l+1}^{-\frac{\tau_1}2}$, the measure of their union is less than $8(l+1)Q_{l+1}^{1-\frac{\tau_1}2} < \frac{1}{2}Q_{l+1}^{-\frac{\tau_1}{4}}$.
On each interval of length $\geq 4Q_{l+1}^{-\frac{\tau_1}2}$, saying $J$, we can construct a $C^{\infty}$ function $\psi^{(l+1)}_J$ supported on $J$, satisfying $0 \leq \psi^{(l+1)}_J \leq 1$ and $|\psi^{(l+1)}_J|_{C^2}\leq c_2Q_{l+1}^{\tau_1}$, such that
$$B\left(supp(\psi^{(l+1)}_J), Q_{l+1}^{-\tau_0}\right) \subset J,\quad \left|\{ E \in J | \psi^{(l+1)}_{J}(E) \neq 1\}\right|\leq 4Q_{l+1}^{-\frac{\tau_1}2}.$$
Let $\psi_{(l+1)}:= \sum_{J} \psi^{(l+1)}_{J}$ where $J$ runs over all the connected components of length $\geq4Q_{l+1}^{-\frac{\tau_1}2}$.
It is direct to check that $\psi_{l+1}$ satisfies the induction assumption.

Since for each $m$, we have $\left|\{ E \in \overline{\Omega}_m \cap supp(\psi_{m-1}) | \psi_{(m)}(E) \neq 1\}\right| \lesssim Q_{m}^{-\frac{\tau_1}{4}}$.
Combined with Lemma \ref{cor_measure}, (\ref{error_psi}) follows from the inclusion
$$\{ E: \psi(E) \neq \psi_{l}(E) \} \subset(\overline{\Omega}_{l} \setminus \overline{\Omega}) \cup \left(\cup_{m=l+1}^{\infty}  \{ E \in \overline{\Omega}_m \cap supp(\psi_{m-1}) | \mbox{ $\psi_{(m)}(E) \neq 1$}\} \right).$$
\qed

%
%

It is direct to see that we have
$|\psi_{l}|_{C^2(\overline{\Omega}_l)}\lesssim  l^2 Q_l^{\tau_1}$
and
$$supp(\psi)\subset \overline{\Omega}, \quad \left|\{ E \in  \overline{\Omega} | \psi(E) =0 \}\right| \lesssim \sum_{l=1}^{\infty} Q_{l}^{-\frac{\tau_1}{4}} \lesssim Q_{1}^{-\frac{\tau_1}{4}}.$$
Recall that $\Sigma_0 = \rho^{-1}(\mathcal{Q}_{\alpha}(\varepsilon, \tau, \nu)) \subset \overline{\Omega}$.
After possibly modifying $\psi_{(l)}$ for finitely many index $l$ (at the cost of enlarging the constant $c_2$ in Proposition \ref{prop_psi}), we can assume that
\begin{eqnarray} \label{supp positive}
&&\left|\left\{E \in (\cup_k J_k) \cap \Sigma_0 \left| \begin{array}{l}
                                                   \psi(E) > 0, \  \exists \ {\rm a \ \ell^{\infty} \ generalised \ eigenvector} \ g(E) \\[1mm]
                                                  {\rm of} \ L \ {\rm such \ that } \ \sum_{n} u_n(0)g_n(E) \neq 0
                                                 \end{array}\right.\right\} \right| > \frac{e_0}{2},
\end{eqnarray}
recalling that $e_0$ is the Lebesgue measure of the subset given in (\ref{subset_e0}).
\begin{rema}\label{rmq_tilde_psi}
The function $\psi$ will serve as a smoothing measure for the modified spectral transformation.
Similarly as in Remark \ref{rmq_tilde}, with $(Q_l)$ replaced by $(R_l)$, we can construct functions $\tilde \psi_{(l)}$, $\tilde \psi_{l}$ and $\tilde \psi$ in the same way.
\end{rema}

The following lemma shows that $Z$, $\phi$ are differentiable in the sense of Whitney on the support of $\psi$.
We follow the inductive step to find their extensions.

\begin{lemma} \label{whitney diff}
$Z$ and $\phi$ are $C^{r-2}$ in the sense of Whitney with respect to $E$ on $(\cup_{k}J_k) \cap supp(\psi)$. Moreover, there exist $c_3$, $D_2 > 0$ such that
$$|\partial_{E}^{j}(Z - Z_l)|_{(\cup_k J_k) \cap supp(\psi) }, \
|\partial_{E}^{j}(\phi -\phi_l)|_{(\cup_k J_k) \cap supp(\psi)} <c_3 e^{-\overline{Q}_l^{D_2}}, \quad j=0,\cdots, r-2$$
\end{lemma}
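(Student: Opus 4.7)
My plan is to construct the Whitney derivatives of $W$ (equivalently $Z$, since $\Lambda$ is $E$-independent) and $\phi$ on $supp(\psi)$ as pointwise limits of the ordinary analytic derivatives $\partial_E^j W_l$ and $\partial_E^j \phi_l$, and then to verify the Whitney remainder condition (\ref{condition_Whitney}) by a scale-matching argument that exploits the buffered inclusion $B(supp(\psi_{(l)}), Q_l^{-\tau_0}) \subset \overline{\Omega}_l$ from Proposition~\ref{prop_psi}.

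First I would note that this buffering forces $supp(\psi) \subset \overline{\Omega}_m$ for every $m$, and more importantly places every point of $supp(\psi)$ strictly inside a connected component of $\overline{\Omega}_l$ on which $W_l$ and $\phi_l$ are analytic. Combining the $C^r$ bound (\ref{prop_error_1}) with the telescoping estimate (\ref{prop_error_2}), for $j=0,\dots,r$ one has
$$ |\partial_E^j W_l - \partial_E^j W_{l+1}|_{(\cup_k J_k)\cap supp(\psi)} \leq c_1 e^{-\overline{Q}_l^{D_1}}, $$
and since $\overline{Q}_l$ grows super-polynomially in $l$, the sequence is Cauchy and I would define $F_j^W := \lim_{l\to\infty} \partial_E^j W_l$, and similarly $F_j^\phi$. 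Telescoping the tail and absorbing constants yields, for some $D_2 \in (0, D_1)$ and all large $l$,
$$ |F_j^W - \partial_E^j W_l|,\ |F_j^\phi - \partial_E^j \phi_l| \leq c_3 e^{-\overline{Q}_l^{D_2}}, \quad j=0,\dots,r, $$
uniformly in $\theta$. Multiplying by $\Lambda$ this already delivers the quantitative estimate claimed in the lemma for $Z - Z_l$.

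Next, to verify the Whitney polynomial bound, I would fix $x \neq y$ in $(\cup_k J_k) \cap supp(\psi)$; the case $|x-y| \geq Q_1^{-\tau_0}$ being immediate from the uniform bound on $F_j^W$, I pick the unique $l \geq 1$ with $Q_{l+1}^{-\tau_0} \leq |x-y| < Q_l^{-\tau_0}$. The buffering then ensures that the whole segment $[x,y]$ lies inside one connected component of $\overline{\Omega}_l$ where $W_l$ is analytic, so Taylor's theorem combined with (\ref{prop_error_1}) gives, for $0 \leq k \leq r-2$,
$$ \Bigl|\partial_E^k W_l(x) - \sum_{j=0}^{r-2-k}\tfrac{1}{j!}\,\partial_E^{k+j}W_l(y)(x-y)^j\Bigr| \leq c_1|x-y|^{r-1-k}. $$
Replacing every $\partial_E^{k+j}W_l$ on the right by $F_{k+j}^W$ via Step~1 and using $|x-y|\leq 1$, the Whitney residual for the candidate polynomial $P_k(x,y)$ built from the $F_{k+j}^W$ is bounded by
$$ |F_k^W(x) - P_k(x,y)| \lesssim |x-y|^{r-1-k} + e^{-\overline{Q}_l^{D_2}}. $$

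The only delicate step is the scale matching that absorbs both terms into $|x-y|^{1 - k/(r-1)}$. For $0 \leq k \leq r-2$, the elementary inequality $r-1-k \geq 1 - k/(r-1)$ (valid since $r\geq 3$) handles the Taylor remainder; for the exponential error, the admissibility of $(Q_l)$ forces $Q_{l+1} \leq \overline{Q}_l^{\cA^{21}}$, so $|x-y| \geq Q_{l+1}^{-\tau_0} \geq \overline{Q}_l^{-\tau_0 \cA^{21}}$ and $e^{-\overline{Q}_l^{D_2}}$ is smaller than any polynomial power of $|x-y|$. The identical argument applied to $\phi_l$ with $F_j^\phi$ concludes the Whitney $C^{r-2}$ regularity of $Z$ and $\phi$ on $(\cup_k J_k) \cap supp(\psi)$. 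I expect this last scale-matching to be the main (though still mild) technical point, since everything else is a bookkeeping translation of Proposition~\ref{prop reducibility}.
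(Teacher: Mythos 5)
Your proposal is correct, and it takes a genuinely different route from the paper's. The paper does not verify the Whitney remainder condition directly: instead it manufactures an actual $C^{r-2}$ function $F$ on the larger set $(\cup_k J_k)\cap\overline\Omega_0$ by interpolating the approximants, $F_l := \Psi_l Z_l + (1-\Psi_l)F_{l-1}$, where the $\Psi_l$ are smooth cutoffs satisfying $supp(\Psi_{l+1})\subset supp(\psi_l)\cap\overline\Omega_{l+1}\subset\Psi_l^{-1}(1)$ and $|\Psi_l|_{C^{r-1}(\R)}\lesssim Q_l^{(r-1)\tau_0}$. The nesting makes $F_{l+1}-F_l=\Psi_{l+1}(Z_{l+1}-Z_l)$, and combining (\ref{prop_error_2}) with the dominance of $e^{-\overline Q_l^{D_1}}$ over $Q_{l+1}^{(r-1)\tau_0}$ shows $\{F_l\}$ is Cauchy in $C^{r-1}$; since $F_l=Z_l$ on $\Psi_l^{-1}(1)$, the limit $F$ equals $Z$ on $supp(\psi)$ and Whitney regularity is inherited for free. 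You instead verify (\ref{condition_Whitney}) by hand: the same $Q_l^{-\tau_0}$-buffer from Proposition~\ref{prop_psi} that lets the paper build $\Psi_l$ with derivative norm $Q_l^{(r-1)\tau_0}$ is used by you to guarantee that the whole segment $[x,y]$ lies in a single component of $J_k\cap\overline\Omega_l$ when $Q_{l+1}^{-\tau_0}\le|x-y|<Q_l^{-\tau_0}$, so Taylor's theorem for $W_l$ applies, with the same polynomial-versus-exponential dominance absorbing the replacement error. Both proofs therefore hinge on the identical two inputs (the buffer and the estimates (\ref{prop_error_1})--(\ref{prop_error_2})); the paper's extension route is somewhat slicker, while yours is more elementary and makes the scale at which the regularity is harvested completely explicit. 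One small point worth adding in your write-up: in the ``trivial'' regime you should also observe that if $x,y$ lie in different $J_k$'s the segment exits $\cup_k J_k$, but since the closures $cl\{J_k\}$ are mutually disjoint this forces $|x-y|$ to be bounded below, so the Whitney bound again follows just from the uniform bound on the $F_j$.
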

\proof Fix any $\theta\in\T$. We only detail the proof for $Z$, with that of $\phi$ being similar.

 Let $F_1 = Z_{1}$. By \eqref{properties}, there exists a sequence of $C^{\infty}$ functions on $\R$, denoted by $\{\Psi_l\}_l$,
with $0 \leq \Psi_l \leq 1$ and $|\Psi_l|_{C^{r-1}(\R)} \lesssim Q_l^{(r-1)\tau_0}$ for each $l \geq 1$, such that
\begin{equation}\label{inclution_supp}
supp(\Psi_{l+1}) \subset supp(\psi_{l})\cap \overline{\Omega}_{l+1} \subset \Psi_l^{-1}(1).
\end{equation}
For $l\geq 2$, we define $F_l := \Psi_l Z_{l} + (1-\Psi_l) F_{l-1}$. Hence,
$F_{l+1} - F_{l} = \Psi_{l+1}(Z_{l+1} - F_{l})$,
and $F_l (E) = Z_l(E)$ for all $E \in \Psi_l^{-1}(1)$. So $F_{l+1} - F_{l}= \Psi_{l+1}(Z_{l+1} - Z_l)$ on $\Psi_l^{-1}(1)$.

Recall that $Z_l=\Lambda W_l$. On each $J_k$, $1\leq k \leq K$, by Proposition \ref{prop reducibility} and \eqref{inclution_supp},
$$|F_{l+1} - F_{l}|_{C^{r-1}(J_k\cap\overline{\Omega}_0)}\leq |\Lambda| |\Psi_{l+1}|_{C^{r-1}(\R)}|W_{l+1} -W_l|_{C^{r-1}(J_k\cap\overline{\Omega}_{l+1})} \lesssim
 c_1|\Lambda| Q_{l+1}^{(r-1)\tau_0} e^{-\overline{Q}_l^{D_1}}.$$
Thus $\{F_{l}\}$ is a Cauchy sequence in $C^{r-1}(J_k \cap \overline{\Omega}_0)$.
So there exists $F \in C^{r-2}((\cup_k J_k)\cap \overline{\Omega}_0)$ such that $\lim_{l \to \infty}|F-F_l|_{C^{r-2}((\cup_k J_k)  \cap \overline{\Omega}_0)} = 0$.
 Moreover, there exist constants $c_3$, $D_2 > 0$, independent of $l$, such that
 $$|\partial_{E}^{j}(F_l - F)|_{(\cup_{k}J_k) \cap \overline{\Omega}_0} < c_3 e^{-\overline{Q}_{l}^{D_2}}, \quad  j= 0,\cdots, r-2.$$
On the other hand, since $F_l = Z_l$ on $(\cup_{k}J_k) \cap \Psi_l^{-1}(1)$, combining with the fact that $supp(\psi_{l})\cap \overline\Omega_{l+1} \subset \Psi_l^{-1}(1)$, we conclude that $F_l$ converges to $Z$ in $C^0$ on $(\cup_k J_k)\cap supp(\psi)$, noting that $supp(\psi)\subset \overline\Omega$, which follows from Proposition \ref{prop_psi}. Thus $F = Z$ on $(\cup_k J_k)\cap supp(\psi)$.
 This concludes the proof since $F$ is $C^{r-2}$ over $(\cup_k J_k)\cap \overline{\Omega}_0$.
\qed


\subsection{Modified spectral transformation}


For each $1 \leq k \leq K$, we choose a non-negative real function $\chi_k \in C^{\infty}(\R)$ such that
$supp(\chi_k) =  cl(J_k)$, i.e. $\chi_k(E) > 0$ for any $E \in J_k$ and $\chi_k(E) = 0$ for any $E \notin J_k$.
Obviously, $\chi_k$ vanishes on $\partial J_k$.
We further require that $\partial_{E}^{i}\chi_{k}(E) = 0$ for $E \in \partial J_k$ for $i=1, 2, 3$.
We define $\chi \in C^{\infty}(\R)$ by $\chi = \sum_{k}\chi_k$.

Since $A_{(E,V)}(\theta) = Z(E,\theta + \alpha)R_{\phi(E,\theta)} Z(E,\theta)^{-1}$ on $((\cup_k J_k) \cap \overline{\Omega}) \times \T$, we can see that $L_{\theta}f=Ef$ with
$$f_n(E,\theta):= e^{{\rm i}\phi^{[n]}(E,\theta)}[Z_{21}(E,\theta+n\alpha)-{\rm i}Z_{22}(E,\theta+n\alpha)].$$
Indeed, by noting that  $\left(\begin{array}{c}
1 \\[1mm]
-{\rm i}
\end{array}
\right)$ is an eigenvector of $R_{\phi(E,\theta)}$ corresponding to the eigenvalue $e^{{\rm i}\phi(E,\theta)}$,
with $\left(\begin{array}{c}
f_{1} \\[1mm]
f_{0}
\end{array}
\right)=Z(E,\theta)\left(\begin{array}{c}
1\\[1mm]
-{\rm i}
\end{array}
\right)$, we get the generalized eigenvector
$$\left(\begin{array}{c}
f_{n+1} \\[1mm]
f_{n}
\end{array}
\right)=Z(E,\theta+n\alpha) \, R_{\phi^{[n]}(E,\theta)} \, Z(E,\theta)^{-1} \left(\begin{array}{c}
f_{1} \\[1mm]
f_{0}
\end{array}
\right)=e^{{\rm i}\phi^{[n]}(E,\theta)} \, Z(E,\theta+n\alpha) \left(\begin{array}{c}
1 \\[1mm]
-{\rm i}
\end{array}
\right).$$
Then, for every $n\in\Z$, let
$$\left(\begin{array}{c}
{\cal K}_n \\[1mm]
{\cal J}_n
\end{array}\right)
=\chi\left(\begin{array}{c}
\Im f_n \\[1mm]
\Re f_n
\end{array}\right)
=\left(\begin{array}{c}
\beta_n\sin \phi^{[n]}-\gamma_n\cos \phi^{[n]}  \\[1mm]
\beta_n\cos \phi^{[n]}+\gamma_n\sin \phi^{[n]}
\end{array}\right),
$$
where $\beta_n(E,\theta):=\chi(E)Z_{21}(E,\theta+n\alpha)$, $\gamma_n(E,\theta):=\chi(E)Z_{22}(E,\theta+n\alpha)$. Obviously,
${\cal K}=({\cal K}_n)_{n\in\Z}$ and ${\cal J}=({\cal J}_n)_{n\in\Z}$ are two generalized eigenvectors of $L$.
Moreover, for every $l\in \N_+$, we can define the approximated coefficients
$$\beta^{(l)}_n(E,\theta):=\chi(E)(Z_l)_{21}(E,\theta+n\alpha), \quad \gamma^{(l)}_n(E,\theta):=\chi(E)(Z_l)_{22}(E,\theta+n\alpha).$$

\begin{lemma} \label{lemma approx}
 For each $1\leq k \leq K$, $\beta_n$, $\gamma_n$ are $C^{r-2}$ in the sense of Whitney on $J_k \cap \overline\Omega$, and $\beta^{(l)}_n$, $\gamma^{(l)}_n$ are $C^\infty$ on $J_k\cap\overline{\Omega}_l$.
Moreover, there exists $c_4>0$ such that, for every $n$ and every $l,k$, we have
\begin{eqnarray}
|\beta_n|_{C^{r-2}(J_k \cap \overline\Omega)}, \ |\gamma_n|_{C^{r-2}(J_k \cap \overline\Omega)}, \ |\beta^{(l)}_n|_{C^r(J_k \cap \overline{\Omega}_l)}, \ |\gamma^{(l)}_n|_{C^r(J_k \cap \overline{\Omega}_l)}&\leq& c_4, \label{beta_gamma_1}  \\
|\partial_{E}^{j}(\beta_n-\beta^{(l)}_n)|_{J_k \cap supp(\psi)}, \ |\partial_{E}^{j}(\gamma_n-\gamma^{(l)}_n)|_{J_k \cap  supp(\psi)}&\leq&c_4e^{-\overline{Q}_l^{D_2}}, \  j = 0, \cdots, r-2 , \label{beta_gamma_2}
\end{eqnarray}
\end{lemma}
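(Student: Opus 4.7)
The plan is to read off the claimed regularity and approximation estimates for $\beta_n,\gamma_n,\beta_n^{(l)},\gamma_n^{(l)}$ from the regularity of $Z$ and $Z_l$ already established, using that the cutoff $\chi(E)$ depends only on $E$ while the shift $\theta\mapsto\theta+n\alpha$ acts only on $\theta$. In particular, $E$-derivatives commute with the shift, and every supremum is taken over $\theta\in\T$, so the resulting bounds will automatically be uniform in $n$ and $\theta$.

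For the approximants $\beta_n^{(l)}$ and $\gamma_n^{(l)}$: recall that on each $J_k$ the matrix $\Lambda$ is constant in $E$ and analytic in $\theta$, and by Proposition \ref{prop reducibility}(2), $W_l$ is analytic on the open set $(J\cap\overline{\Omega}_l)\times\T$ with $|W_l|_{C^{r}(J\cap\overline{\Omega}_l)}\le c_1$ uniformly in $l$. Hence $Z_l=\Lambda W_l$ is $C^\infty$ in $E$ (in fact analytic) on $J_k\cap\overline{\Omega}_l$ and satisfies $|Z_l|_{C^{r}(J_k\cap\overline{\Omega}_l)}\le |\Lambda|\,c_1$. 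Since $\chi\in C^\infty(\R)$ with $|\chi|_{C^{r}}<\infty$ and the substitution $\theta\mapsto\theta+n\alpha$ does not enlarge the $C^{r}$-norm with respect to $E$, the Leibniz rule yields the second half of \eqref{beta_gamma_1} with a constant $c_4$ depending only on $|\chi|_{C^{r}}$, $|\Lambda|$ and $c_1$, uniformly in $n,k,l$.

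For $\beta_n$ and $\gamma_n$: by Lemma \ref{whitney diff}, $Z$ is $C^{r-2}$ in the sense of Whitney on $(\cup_k J_k)\cap supp(\psi)$, with Whitney derivatives bounded uniformly by the construction of the extensions $F_l$ in its proof. Since $supp(\psi)\subset\overline{\Omega}$, multiplying by $\chi(E)$ and composing with the pure $\theta$-shift preserves Whitney $C^{r-2}$-regularity and gives the first half of \eqref{beta_gamma_1}, again with a constant depending only on $|\chi|_{C^{r-2}}$ and the Whitney bounds on $Z$.

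Finally, the error estimate \eqref{beta_gamma_2} follows directly from the exponential estimate $|\partial_E^{j}(Z-Z_l)|_{(\cup_k J_k)\cap supp(\psi)}\le c_3 e^{-\overline{Q}_l^{D_2}}$, $j=0,\dots,r-2$, provided by Lemma \ref{whitney diff}. Multiplying by $\chi(E)$ preserves the rate $e^{-\overline{Q}_l^{D_2}}$ up to an enlargement of the multiplicative constant, and the $\theta$-shift is irrelevant for the $E$-derivatives, so the bound survives uniformly in $n$ and $\theta$ after possibly enlarging $c_4$. No new analytic estimate is needed; the only mild subtlety is to match the Whitney domain on which $Z$ was shown to be differentiable ($supp(\psi)$) with the one in the statement, which is handled by the inclusion $supp(\psi)\subset\overline{\Omega}$. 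Hence the only potential obstacle—keeping the $E$-derivative bounds uniform in the shift index $n$—is in fact automatic, and the lemma reduces to a direct application of Proposition \ref{prop reducibility} and Lemma \ref{whitney diff}.
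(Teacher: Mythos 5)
Your proposal follows the same route as the paper's own proof: pull out the factor $\chi(E)$, observe that the $\theta$-shift is invisible to $\partial_E$ and to the supremum over $\theta\in\T$, use Proposition~\ref{prop reducibility}(2) (i.e.\ \eqref{prop_error_1}) to bound $\beta_n^{(l)},\gamma_n^{(l)}$ in $C^r(J_k\cap\overline{\Omega}_l)$, and use Lemma~\ref{whitney diff} for the exponential closeness \eqref{beta_gamma_2}. That is exactly what the published proof does, and your constants are uniform in $n,l,k$ for the same reason.

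There is, however, one genuine slip in your last paragraph. You say the discrepancy between the Whitney domain supplied by Lemma~\ref{whitney diff}, namely $(\cup_k J_k)\cap supp(\psi)$, and the domain claimed in the statement, namely $J_k\cap\overline{\Omega}$, ``is handled by the inclusion $supp(\psi)\subset\overline{\Omega}$.'' That inclusion goes the wrong way for what you want: you are trying to \emph{upgrade} from the smaller set $supp(\psi)$ to the larger set $\overline{\Omega}$, and knowing Whitney regularity on a subset gives nothing on a superset. Lemma~\ref{whitney diff} itself only identifies the $C^{r-2}$ extension $F$ with $Z$ on $supp(\psi)$, not on all of $\overline{\Omega}$, so it cannot by itself deliver the first half of \eqref{beta_gamma_1} on $J_k\cap\overline{\Omega}$. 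The clean way to get the $C^{r-2}_W(J_k\cap\overline{\Omega})$ bound for $\beta_n$ is to telescope \eqref{prop_error_2} directly on $J\cap\overline{\Omega}\subset\overline{\Omega}_{l+1}$: the uniform $C^r$ bounds and exponentially small increments $|W_l-W_{l+1}|_{C^r(J\cap\overline{\Omega}_{l+1})}$ imply that $\partial_E^jW_l$, $j\le r-2$, converge uniformly on $J\cap\overline{\Omega}$ and define Whitney derivatives of $W$ there, with $C^{r-2}_W$-norm $\le c_1$. (The paper's citation of \eqref{prop_error_1} at this point is itself a bit terse, since \eqref{prop_error_1} only records the $C^0$ bound on $W$; the $C^r$ bounds on $W_l$ together with \eqref{prop_error_2} are really what is being used.) Alternatively one could note that in all downstream uses only the bounds on $J_k\cap supp(\psi)$ are needed, but as written your inclusion argument does not justify the domain in the statement.
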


\proof The regularity of $\beta_n$, $\gamma_n$, $\beta_n^{(l)}$, $\gamma_n^{(l)}$ follows easily from their definitions.

Recall that $\beta_n=\chi Z_{21}(\theta+n\alpha)$, $\beta^{(l)}_n=\chi(Z_l)_{21}(\theta+n\alpha)$ with $Z=\Lambda W$, $Z_l=\Lambda W_l$. Then, by (\ref{prop_error_1})  in Proposition \ref{prop reducibility}, we have
$$
|\beta_n|_{C^{r-2}(J_k \cap \overline\Omega)}, \ |\beta^{(l)}_n|_{C^r(J_k \cap \overline{\Omega}_l)}\leq 6 c_1 |\chi |_{C^r(J_k)} |\Lambda|.
$$
and in view of Lemma \ref{whitney diff}, we have
$$|\partial_{E}^{j}(\beta_n-\beta^{(l)}_n)|_{J_k \cap  supp(\psi)}\leq 2 c_3 |\chi |_{C^{r-2}(J_k)} e^{-\overline{Q}_l^{D_2}}. \quad j=0, \cdots, r-2.$$
By choosing $c_4:=10 \max (c_1|\Lambda|, c_3)\cdot|\chi |_{C^r(J_k)}$, we get the estimates $(\ref{beta_gamma_1}),(\ref{beta_gamma_2})$ for $\beta_n$ and $\beta_n^{(l)}$.
The proof for $\gamma_n$ and $\gamma_n^{(l)}$ is similar.\qed


%

Let $d\mu :=\left(\begin{array}{cc}
\psi\, dE & 0 \\[1mm]
0 & \psi\, dE
\end{array}\right)$.
Recall the definition of ${\cal L}^2-$space given in (\ref{gen_L2}).
${\cal L}^2(d\mu)$ means the space of vectors
$G=(g_j)_{j=1,2}$, with $g_j$ functions of $E\in\R$ satisfying
$$
\|G\|_{{\cal L}^2(d\mu)}^2:= \int_\R (|g_1|^2+|g_2|^2)\,  \psi dE<\infty.
$$
For any $\theta \in \T_{ac}$, we define the modified spectral transformation for the operator $L_{\theta}$ by
$$\begin{array}{clcl}
    {\cal S}_{\theta}: & {\cal W}^2(\Z) & \rightarrow & {\cal L}^2(d\mu) \\[1mm]
     & (u_n)_{n\in\Z} & \mapsto &  \displaystyle \left(\begin{array}{c}
                                                         \sum_{n \in \Z}u_n {\cal K}_n(\cdot, \theta) \\[1mm]
                                                         \sum_{n \in \Z}u_n {\cal J}_n(\cdot, \theta)
                                                       \end{array}
     \right)
  \end{array}.$$
Recall that $\Sigma_0\subset\overline{\Omega}$.
Since ${\cal K}_n$, ${\cal J}_n$ are uniformly bounded for all $(E, \theta)\in ((\cup_k J_k) \cap \Sigma_0) \times  \T$ and $n \in \Z$, and $0\leq \psi \leq 1$, we can see by Cauchy inequality that for any $u\in{\cal W}^2(\Z)$,
$$\left\|\left(\begin{array}{c}
       \sum_{n \in \Z}u_n {\cal K}_n(\cdot, \theta) \\
       \sum_{n \in \Z}u_n {\cal J}_n(\cdot, \theta)
     \end{array}
\right)\right\|^2_{{\cal L}^2(d\mu)}\leq (\|{\cal K}\|_{\ell^\infty}^2+\|{\cal J}\|_{\ell^\infty}^2) \sum_{m,n\in\Z}|u_m| |u_n| \lesssim \lan u\ran_{2}^2.$$
For $M  > 0$, we also define the truncated spectral transformation
$$\begin{array}{clcl}
    {\cal S}_{\theta, M}: & \ell^2(\Z) & \rightarrow & {\cal L}^2(d\mu) \\[1mm]
     & (u_n)_{n\in\Z} & \mapsto &  \displaystyle \left(\begin{array}{c}
                                                         \sum_{|n| \leq M}u_n {\cal K}_n(\cdot, \theta) \\[1mm]
                                                         \sum_{|n| \leq M}u_n {\cal J}_n(\cdot, \theta)
                                                       \end{array}
     \right)
  \end{array}.$$
It is clear that given any $u\in {\cal W}^2(\Z)$, for every $E\in ((\cup_k J_k) \cap \overline{\Omega})$,
$$ \lim_{M\rightarrow \infty}\sum_{|n|> M}|u_n {\cal K}_n(E, \theta)|=\lim_{M\rightarrow \infty}\sum_{|n|> M}|u_n {\cal J}_n(E, \theta)|=0.$$
So ${\cal S}_{\theta}u$ is the pointwise limit of ${\cal S}_{\theta, M}u$ as $M\rightarrow\infty$.

For given $\theta\in\T_{ac}$, we choose $\varepsilon > 0$ and $\psi$ accordingly, so that by \eqref{supp positive} we have
\begin{eqnarray*}
\quad \left| \left\{E \in (\cup_k J_k) \cap \Sigma_0 \left| \psi(E) > 0, \,  \left(\begin{array}{c}
                                      \sum_{n} u_n(0){\cal K}_n(E, \theta)\\
                                       \sum_{n} u_n(0){\cal J}_n(E, \theta)
                                    \end{array}
\right) \neq 0\right. \right\}\right| > \frac{e_0}{2}.
\end{eqnarray*}
Then, as the modified spectral transformation with measure $d\mu$, $S_{\theta}$ has the property that $S_{\theta}u \neq 0$. Thus by dominated convergence theorem, we have
\begin{eqnarray} \label{positive area}
\lim_{M \to \infty} \norm{S_{\theta, M}u}_{{\cal L}^2(d\mu)}  = \norm{ S_{\theta}u }_{{\cal L}^2(d\mu)} > 0.
\end{eqnarray}

\begin{rema}
As shown in Subsection \ref{pre_op_cocycle},
the classical spectral transformation is a unitary transformation from $\ell^2(\Z)$ to ${\cal L}^2(d\mu)$, with $d\mu$ the matrix of spectral measures introduced by $m-$functions.
In contrast, to get better differentiability with respect to $E$, the modified spectral transformation ${\cal S}_\theta$ here is not a unitary one, and we define it just on the subspace ${\cal W}^2(\Z)$.
Comparing with (\ref{classical_uv}) for the free Schr\"odinger operator, ${\cal K}_n$ and ${\cal J}_n$ for ${\cal S}_\theta$ have no divisor as `` $\sim\sin\xi_0$" and they have a smoothing factor $\chi$ which covers the singularities.
Moreover, instead of the spectral measures shown in Theorem \ref{spectral_measure_matrix},
we use the explicit measure $\psi\, dE$, which serves also as a smoothing factor.
\end{rema}

%
%

%
%
%
%

By direct computations, we obtain
\begin{equation}\label{derivative_KnJn}
\left(\begin{array}{c}
\partial_E{\cal K}_n \\[1mm]
\partial_E{\cal J}_n
\end{array}\right)=\left(\begin{array}{c}
{\cal B}_n\cos\phi^{[n]} + {\cal C}_n\sin\phi^{[n]} \\[1mm]
- {\cal B}_n\sin\phi^{[n]} + {\cal C}_n\cos\phi^{[n]}
\end{array}\right)
\end{equation}
with ${\cal B}_n:=\beta_n \partial_{E}\phi^{[n]} - \partial_{E}\gamma_n$, ${\cal C}_n:=\gamma_n \partial_{E}\phi^{[n]} + \partial_{E}\beta_n$, where $\partial_E$ is interpreted in the sense of Whitney.
Similarly, we define the approximated coefficients
${\cal B}^{(l)}_n:=\beta^{(l)}_n \partial_{E}\phi_l^{[n]} - \partial_{E}\gamma^{(l)}_n$,
${\cal C}^{(l)}_n:=\gamma^{(l)}_n \partial_{E}\phi_l^{[n]} + \partial_{E}\beta^{(l)}_n$. According to Lemma \ref{whitney diff} and \ref{lemma approx}, $\beta_n$, $\gamma_n$ are bounded on $J_k\cap supp(\psi)$, and $\beta^{(l)}_n$, $\gamma^{(l)}_n$ are $C^\infty$ on $J_k\cap\overline\Omega_{l}$.
Combining with the fact that
$|\partial_E \phi_{l}^{[n]}| \leq c_1 n$, we can find a constant $c_5>0$, such that
\begin{eqnarray}
|{\cal B}_n|_{J_k \cap supp(\psi)},
\ |{\cal C}_n|_{J_k \cap supp(\psi)}, \ |{\cal B}^{(l)}_n|_{C^2(J_k \cap \overline{\Omega}_l)},
\ |{\cal C}^{(l)}_n|_{C^2(J_k \cap \overline{\Omega}_l)} &\leq&  c_5 n,\label{partial_KJ_1}\\
|{\cal B}_n-{\cal B}^{(l)}_n|_{J_k \cap supp(\psi)}, \ |{\cal C}_n-{\cal C}^{(l)}_n|_{J_k \cap  supp(\psi)}   &\leq& c_5 n e^{-\overline{Q}_l^{D_2}}, \label{partial_KJ_2}\\
|\partial_E{\cal K}_n|_{J_k \cap  supp(\psi)}, \ |\partial_E{\cal J}_n|_{J_k \cap  supp(\psi)}&\leq& c_5 n.\label{partial_KJ_4}
\end{eqnarray}

\

To consider the ballistic lower bound for any $p\geq 0$, it can be reduced to considering the case that $p$ is an even integer.
This will be discussed in Subsection \ref{The general case}.
Assume that $p \geq 2$ is an even integer, and let $r =p/2 + 2$.
We have
\begin{equation}\label{derivative_KnJn2}
\left(\begin{array}{c}
\partial_E^{r-2}{\cal K}_n \\[1mm]
\partial_E^{r-2}{\cal J}_n
\end{array}\right)=\left(\begin{array}{c}
{\cal B}_{n,r}\cos\phi^{[n]} + {\cal C}_{n,r}\sin\phi^{[n]} \\[1mm]
- {\cal B}_{n,r}\sin\phi^{[n]} + {\cal C}_{n,r}\cos\phi^{[n]}
\end{array}\right)
\end{equation}
with $\cal B_{n,r}$, $\cal C_{n,r} $ being a linear combination of monomials of $\partial_{E}^{j}\beta_{n}$, $\partial_E^{j}\gamma_n$, $\partial_E^{j} \phi^{[n]}$ with sum of the degrees of derivatives no more than $r-2$. We define the approximants $\cal B_{n,r}^{(l)}$, $\cal C_{n,r}^{(l)} $ in the similar way.  By Lemma \ref{whitney diff}, \ref{lemma approx} and  \eqref{prop_error_2}, there exists $c_6 > 0$ such that
\begin{eqnarray}
|{\cal B}_{n,r}|_{J_k \cap supp(\psi)},
\ |{\cal C}_{n,r}|_{J_k \cap supp(\psi)}, \ |{\cal B}^{(l)}_{n,r}|_{C^2(J_k \cap \overline{\Omega}_l)},
\ |{\cal C}^{(l)}_{n,r}|_{C^2(J_k \cap \overline{\Omega}_l)} &\leq&  c_6 n^{r-2},\label{2partial_KJ_1}\\
|{\cal B}_{n,r}-{\cal B}^{(l)}_{n,r}|_{J_k \cap supp(\psi)}, \ |{\cal C}_{n,r}-{\cal C}^{(l)}_{n,r}|_{J_k \cap  supp(\psi)}   &\leq& c_6 n^{r-2} e^{-\overline{Q}_l^{D_2}}, \label{2partial_KJ_2}\\
|\partial_E^{r-2}{\cal K}_{n}|_{J_k \cap  supp(\psi)}, \ |\partial_E^{r-2}{\cal J}_{n}|_{J_k \cap  supp(\psi)}&\leq& c_6 n^{r-2}.\label{2partial_KJ_4}
\end{eqnarray}

\subsection{Ballistic lower bound for $p=2$}

In this subsection, we will prove Theorem \ref{main theorem 2} for $p=2$. This corresponds to the estimates for $r=3$.  The proof for the general case for arbitrary $p$ is completely analogy to that of $p=2$, we will sketch the needed modifications in the next subsection.

From now on, we fix $\theta \in \T_{ac}$, and we will not express this dependence explicitly.
With $u(t)$ the solution of Eq. (\ref{Eq_Schrodinger}) with initial condition $u(0) = u \in {\cal W}^{2}(\Z) \setminus \{0\}$, we consider
\begin{eqnarray*}
&&G(t, E)= \left(G^{(j)}(t,E)\right)_{j=1,2} := S_{\theta}(u(t)),  \\
&&G_{T}(t,E)= \left(G_{T}^{(j)}(t,E)\right)_{j=1,2}:= S_{\theta, \floor*{T^{100}}+1}(u(t)) \ {\rm for \ any } \ T>0 .
\end{eqnarray*}
According to Eq. (\ref{Eq_Schrodinger}),  we have
\begin{eqnarray*}
{\rm i}\partial_{t}G_{T}(t,E) &=& {\rm i} \left(\begin{array}{c}
                    \sum_{|n| \leq \floor*{ T^{100}}+1} \partial_{t}u_{n}(t) {\cal K}_{n}(E, \theta) \\[1mm]
                    \sum_{|n| \leq  \floor*{T^{100}}+1} \partial_{t}u_{n}(t) {\cal J}_{n}(E, \theta)
                  \end{array}
\right) \\[1mm]
&=& \left(\begin{array}{c}
                    \sum_{|n| \leq \floor*{T^{100}}+1} (Lu)_{n}(t) {\cal K}_{n}(E, \theta) \\[1mm]
                    \sum_{|n| \leq \floor*{T^{100}}+1} (Lu)_{n}(t) {\cal J}_{n}(E, \theta)
                  \end{array}
\right) \\[1mm]
&=& \left(\begin{array}{c}
                    \sum_{|n| \leq \floor*{T^{100}}+1} u_{n}(t) (L{\cal K})_{n}(E, \theta) \\[1mm]
                    \sum_{|n| \leq \floor*{T^{100}}+1} u_{n}(t) (L{\cal J})_{n}(E, \theta)
                  \end{array}
\right)+F_{T}(t,E) \\[1mm]
&=& EG_T(t,E) + F_{T}(t,E),
\end{eqnarray*}
where $F_{T}(t,E)=\left(F^{(j)}_{T}(t,E)\right)_{j=1,2}$ is given by
\begin{eqnarray*}
F_{T}(t,E)
&= & \left(\begin{array}{c}
u_{\floor*{T^{100}}+1}(t){\cal K}_{\floor*{T^{100}}+2}(E, \theta)-u_{\floor*{T^{100}}+2}(t){\cal K}_{\floor*{T^{100}}+1}(E, \theta)\\[1mm]
u_{\floor*{T^{100}}+1}(t){\cal J}_{\floor*{T^{100}}+2}(E, \theta)-u_{\floor*{T^{100}}+2}(t){\cal J}_{\floor*{T^{100}}+1}(E, \theta)
\end{array}
\right) \nonumber \\[1mm]
& & +\left(\begin{array}{c}
u_{-\floor*{T^{100}}-1}(t){\cal K}_{-\floor*{T^{100}}-2}(E, \theta)-u_{-\floor*{T^{100}}-2}(t){\cal K}_{-\floor*{T^{100}}-1}(E, \theta)\\[1mm]
u_{-\floor*{T^{100}}-1}(t){\cal J}_{-\floor*{T^{100}}-2}(E, \theta)-u_{-\floor*{T^{100}}-2}(t){\cal J}_{-\floor*{T^{100}}-1}(E, \theta)
\end{array}
\right). \label{F T}
\end{eqnarray*}
Hence we have $G^{(j)}_{T}(t,E) = e^{-itE}G^{(j)}_{T}(0,E) -i \int_{0}^{t} e^{-i(t-s)E}F^{(j)}_{T}(s,E) ds$, and
\begin{eqnarray}
\partial_{E}G^{(j)}_{T}(t,E)&=&(-it)e^{-itE}G^{(j)}_{T}(0,E) + e^{-itE}\partial_EG^{(j)}_{T}(0,E)  \nonumber \\
   & & -  \int_{0}^{t}(t-s)e^{-i(t-s)E}F^{(j)}_{T}(s,E) ds -i \int_{0}^{t} e^{-i(t-s)E} \partial_EF^{(j)}_{T}(s,E) ds.  \label{partial G T}
\end{eqnarray}

The following two lemmas show that $F_{T}$ is negligible up to time $T$.

\begin{lemma} \label{F T small}
There is a constant $c_7 > 0$ such that, given any $T>1$, for $0 \leq t \leq T$,
\begin{equation}\label{F T L2}
\|F_{T}(t,E)\|^2_{{\cal L}^2(d\mu)} \leq  c_7 T^{-3}.
\end{equation}
\end{lemma}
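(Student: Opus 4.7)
The strategy is to exploit the fact that $F_T$ is a \emph{boundary commutator}: only four components of $u(t)$ appear, namely those at indices $n = \pm(\lfloor T^{100}\rfloor+1)$ and $\pm(\lfloor T^{100}\rfloor+2)$. Since these indices are polynomially much larger than $t \le T$, the ballistic upper bound of Theorem \ref{ballistic_upper_general} forces $|u_n(t)|$ to be tiny there. The generalised eigenfunctions ${\cal K}_n, {\cal J}_n$ are uniformly bounded on the support of the measure $d\mu = \psi \, dE \cdot \mathrm{Id}$, so the $\mathcal{L}^2(d\mu)$-norm of $F_T$ reduces to a sum of four tiny numbers.

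First I would note that, by construction, ${\cal K}_n(E,\theta) = \beta_n\sin\phi^{[n]}-\gamma_n\cos\phi^{[n]}$ and ${\cal J}_n(E,\theta) = \beta_n\cos\phi^{[n]}+\gamma_n\sin\phi^{[n]}$ with $\beta_n = \chi Z_{21}(\cdot + n\alpha)$, $\gamma_n = \chi Z_{22}(\cdot+n\alpha)$. Since $\chi$ is compactly supported and $Z = \Lambda W$ is uniformly bounded on $(\cup_k J_k)\cap \overline{\Omega}$ by \eqref{prop_error_1} (and $\mathrm{supp}(\psi) \subset \overline{\Omega}$), there exists $C>0$ with $|{\cal K}_n|, |{\cal J}_n| \le C$ on $(\cup_k J_k)\cap \mathrm{supp}(\psi) \times \T$ for every $n\in\Z$. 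Combined with $0\le \psi \le 1$ and the compactness of $\mathrm{supp}(\psi) \subset \cup_k \mathrm{cl}(J_k)$, this yields, from the explicit formula for $F_T$,
\[
\|F_T(t,E)\|^2_{\mathcal{L}^2(d\mu)} \le C' \sum_{n \in \mathcal{N}_T} |u_n(t)|^2, \qquad \mathcal{N}_T := \{\pm(\lfloor T^{100}\rfloor+1),\, \pm(\lfloor T^{100}\rfloor+2)\},
\]
for some absolute constant $C'$.

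Next I would apply Theorem \ref{ballistic_upper_general} with $p=2$, $p'=2$, and $u(0) \in \mathcal{W}^2(\Z)$, which gives a constant $C_3$ such that $\langle u(t)\rangle_2^2 \le C_3^2 t^2$ for $t \ge 1$. Since $(n^2+1)|u_n(t)|^2 \le \langle u(t)\rangle_2^2$, for any $n \in \mathcal{N}_T$ and $1\le t\le T$,
\[
|u_n(t)|^2 \le \frac{C_3^2 t^2}{n^2+1} \le \frac{C_3^2 T^2}{T^{200}} = C_3^2\, T^{-198}.
\]
For $0\le t \le 1$, $\ell^2$-conservation gives $|u_n(t)|^2\le \|u(0)\|_{\ell^2}^2$, but the same $(n^2+1)^{-1}$ decay on $\langle u(0)\rangle_2 < \infty$ carries over to all $t \in [0,1]$ by continuity of $t\mapsto u(t)$ in $\mathcal{W}^2(\Z)$ (which in turn follows from the same argument used to prove Theorem \ref{ballistic_upper_general}). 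Plugging this into the previous display gives $\|F_T(t,E)\|^2_{\mathcal{L}^2(d\mu)} \le 4C'C_3^2 T^{-198}$, which dominates $c_7 T^{-3}$ for any reasonable choice of $c_7$.

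There is no real obstacle here: the lemma is a direct transfer of the ballistic upper bound across the modified spectral transformation. The only point that deserves a line of care is the handling of $0\le t\le 1$, which is why one needs the ballistic upper bound with initial datum in $\mathcal{W}^2(\Z)$ (and not just $\ell^2(\Z)$), precisely the hypothesis of the current subsection.
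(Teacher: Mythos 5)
Your proof is correct, but it takes a genuinely different and simpler route than the paper's. The paper writes $u_N(t) = \langle u(0), e^{{\rm i}tL}\delta_N\rangle$ and splits this inner product at an intermediate scale $T^{10}$: the tail $\sum_{n\ge T^{10}}|u_n(0)|^2$ is controlled by $\langle u(0)\rangle_2^2\, T^{-10}$, while the bulk $\sum_{n<T^{10}}|(e^{{\rm i}tL}\delta_N)_n|^2$ is controlled by shifting to $S^{-N}LS^N$ and applying the ballistic upper bound (Theorem 2.22 of \cite{DamanikTch}) to the compactly supported datum $\delta_0$, which costs only $T^{-10}(t^3+1)$. You instead observe that $(N^2+1)|u_N(t)|^2\le \langle u(t)\rangle_2^2$ and apply Theorem \ref{ballistic_upper_general} directly to $u$, getting $|u_N(t)|^2\lesssim T^2/T^{200}$, a vastly stronger bound than the paper ever needs. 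Both arguments are valid under the standing hypothesis $u(0)\in\mathcal{W}^2(\Z)$; yours is shorter and cleaner.

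The tradeoff is worth noting: your direct method does not extend to the companion Lemma \ref{P_F T small}, where one must show $\sup_{0\le t\le T}\|\partial_E F_T(t,\cdot)\|^2\to 0$ and the relevant quantity is $|N u_N(t)|$. The naive analogue, $N^2|u_N(t)|^2\le \langle u(t)\rangle_2^2\lesssim T^2$, gives no decay; bounding $N^4|u_N(t)|^2\le\langle u(t)\rangle_4^2$ would require $u(0)\in\mathcal{W}^4(\Z)$, which is not assumed. The paper's splitting scheme is designed precisely so that the same template carries over to \ref{P_F T small}: there the tail $\sum_{n\ge N/2}n^2|u_n(0)|^2\to 0$ uses only $u(0)\in\mathcal{W}^2(\Z)$, and the bulk is handled by the $p=4$ moment of the compactly supported $\delta_0$, which is always available. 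So while your argument is a valid and indeed preferable proof of Lemma \ref{F T small} in isolation, the paper's somewhat heavier machinery exists to serve both lemmas under the single hypothesis $u(0)\in\mathcal{W}^2(\Z)$. One minor stylistic point: your treatment of $0\le t\le 1$ invokes "continuity of $t\mapsto u(t)$ in $\mathcal{W}^2(\Z)$," which is a bit indirect; it is cleaner to quote the intermediate inequality $\langle u(t)\rangle_2\le\langle u(0)\rangle_2+c' t\,\|u(0)\|_{\ell^2}$ from the proof of Theorem \ref{ballistic_upper_general}, which holds for all $t\ge 0$ and gives a uniform bound on $[0,1]$ directly.
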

\proof
By the expression of $F_{T}$, we have, for $j=1,2$,
$$|F^{(j)}_{T}(t,E)| \leq (\|\cal K\|_{\ell^\infty} + \|\cal J\|_{\ell^\infty})\sum_{i=1,2}\sum_{N=\pm\left(\floor*{T^{100}}+i\right)}|u_{N}(t)|.$$
For $N  = \floor*{T^{100}}+1$, $\floor*{T^{100}}+2$, $u_N(t) = \langle e^{-itL}u(0), \delta_N \rangle = \langle u(0), e^{itL} \delta_N \rangle$.
Thus for any $0 \leq t \leq T$,
$$|u_N(t)| \leq \|u(0)\|_{\ell^2(\Z)} \left(\sum_{n<T^{10}} |(e^{itL} \delta_N)_n|^2 \right)^{\frac12}+ \|e^{itL}\delta_N\|_{\ell^2(\Z)} \left( \sum_{n\geq T^{10}}|u_n(0)|^2 \right)^{\frac12}.$$
By Cauchy inequality,
$$ \sum_{n\geq T^{10}}|u_n(0)|^2\leq\left(\sum_{n\geq T^{10}}\frac{1}{n^2}\right)\langle u(0)\rangle^2_{2} \lesssim T^{-10}\langle u(0)\rangle^2_{2}.$$
Let $S:\ell^2(\Z)\to\ell^2(\Z)$ be the unitary operator defined as $(Sq)_n=q_{n-1}$.
Thus
\begin{eqnarray}
\sum_{n<T^{10}} |(e^{itL} \delta_N)_n|^2&=&\sum_{n+N<T^{10}}|(e^{itL} \delta_N)_{n+N}|^2\nonumber\\
&=&\sum_{n<T^{10}-N}|(S^{-N}e^{itL}S^{N} \delta_0)_{n}|^2\nonumber\\
&\leq& \left(\sum_{n<T^{10}-N}\frac{1}{n^2}\right) \left(\sum_{n\in\Z}n^2|(S^{-N}e^{itL}S^{N} \delta_0)_{n}|^2\right).\label{F_T_part1}
\end{eqnarray}
By the fact that $N\gg 2T^{10}$, we see
$$\sum_{n<T^{10}-N}\frac{1}{n^2}< \sum_{n<-T^{10}}\frac{1}{n^2}\lesssim T^{-10}.$$
Note that $S^{-N}e^{itL}S^{N}$ equals to $e^{it(S^{-N}LS^{N})}$ and the operator $S^{-N}LS^{N}:\ell^2(\Z)\to\ell^2(\Z)$ satisfies
$$(S^{-N}LS^{N} q)_n=-(q_{n+1}+q_{n-1})+V(\theta+n\alpha+N\alpha) q_n.$$
Then, applying Theorem 2.22 in \cite{DamanikTch} with $p=2$ and $\eta=\frac32$, we get, for any $0\leq t \leq T$,
$$\sum_{n<T^{10}} |(e^{itL} \delta_N)_n|^2\lesssim T^{-10}\cdot (t^{3}+1) \lesssim T^{-6}. $$
Hence $|u_N(t)| \lesssim \|u(0)\|_{\ell^2(\Z)} T^{-3}+ \lan u(0) \ran_2 T^{-5}$ for $N  = \floor*{T^{100}}+1$, $\floor*{T^{100}}+2$.
In the same way, we can get the same estimate for $N  = -\floor*{T^{100}}-1$, $-\floor*{T^{100}}-2$.
So there exists a constant $c_7 = c_7(\lan u(0)\ran_{2},\|u(0)\|_{\ell^2(\Z)}, \|{\cal K}\|_{\ell^\infty})$ such that
 for $j=1,2$,
$$|F^{(j)}_T(t,E)| \leq  \frac{\sqrt{c_7}}{4} T^{-3}, \quad \forall 0\leq t \leq T.$$
This proves \eqref{F T L2}.\qed

%

\begin{lemma}\label{P_F T small}We have $\lim_{T \to \infty} \sup_{0 \leq t \leq T}  \|\partial_E F_{T}(t,E)\|^2_{{\cal L}^2(d\mu)} = 0$.
\end{lemma}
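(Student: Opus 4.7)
The key observation is that $F_T$ has at most eight summands (the boundary terms from the truncation), each of the form $u_N(t)\,\mathcal K_{N'}(E,\theta)$ or $u_N(t)\,\mathcal J_{N'}(E,\theta)$ with $|N|,|N'|\in\{\lfloor T^{100}\rfloor+1,\lfloor T^{100}\rfloor+2\}$, and only the $\mathcal K,\mathcal J$ factors depend on $E$. Differentiating in $E$ and applying the bound $|\partial_E\mathcal K_n|,|\partial_E\mathcal J_n|\leq c_5|n|$ from \eqref{partial_KJ_4} on $(\cup_kJ_k)\cap\mathrm{supp}(\psi)$, one obtains pointwise
$$|\partial_E F_T(t,E)|^2\lesssim T^{200}\!\!\sum_{|N|\in\{\lfloor T^{100}\rfloor+1,\lfloor T^{100}\rfloor+2\}}\!\!|u_N(t)|^2.$$
Since $\mathrm{supp}(\psi)\subset\cup_kJ_k$ has finite Lebesgue measure, it suffices to show that $\sup_{0\leq t\leq T}|u_N(t)|^2=o(T^{-200})$ for these four indices, and then integrate against $\psi\,dE$.

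To estimate $u_N(t)=\langle u(0),e^{itL}\delta_N\rangle$, I split $u(0)$ at the cutoff $M:=\lfloor T^{100}\rfloor/2$ into near and far parts, mirroring the decomposition used in the proof of Lemma \ref{F T small}. For the far part, Cauchy--Schwarz and unitarity give
$$\bigl|\langle u(0)\mathbf 1_{|m|\geq M},e^{itL}\delta_N\rangle\bigr|\leq\|u(0)\mathbf 1_{|m|\geq M}\|_{\ell^2}\leq M^{-1}\sqrt{r_M},\quad r_M:=\!\!\sum_{|m|\geq M}m^2|u_m(0)|^2.$$
The essential point is that $r_M\to 0$ as $M\to\infty$ because $u(0)\in{\cal W}^2(\Z)$. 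For the near part, I conjugate by the shift $S^N$ exactly as in Lemma \ref{F T small}; the constraint $|m|<M$ together with $|N|\geq T^{100}+1$ forces $|m-N|\geq M$, so a Chebyshev inequality followed by Theorem~2.22 of \cite{DamanikTch}, applied to the conjugated operator $S^{-N}LS^N$ with the well-localized initial datum $\delta_0$ and parameters $p=3,\eta=1$, yields
$$\sum_{|m|<M}|(e^{itL}\delta_N)_m|^2\leq M^{-3}\!\!\sum_{n\in\Z}|n|^3|(e^{itS^{-N}LS^N}\delta_0)_n|^2\lesssim T^{-300}(t^4+1)\lesssim T^{-296}$$
uniformly for $0\leq t\leq T$. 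Cauchy--Schwarz then bounds the near contribution by $\|u(0)\|_{\ell^2}T^{-148}$.

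Combining these yields $|u_N(t)|^2\lesssim T^{-200}\,r_{\lfloor T^{100}\rfloor/2}+T^{-296}\|u(0)\|_{\ell^2}^2$ uniformly in $t\in[0,T]$, hence
$$\|\partial_E F_T(t,\cdot)\|_{{\cal L}^2(d\mu)}^2\lesssim r_{\lfloor T^{100}\rfloor/2}+T^{-96}\|u(0)\|_{\ell^2}^2\xrightarrow[T\to\infty]{}0.$$
The main difficulty is that the naive tail bound $\|u(0)\mathbf 1_{|m|\geq M}\|_{\ell^2}\leq M^{-1}\langle u(0)\rangle_2$ is sharp up to a constant and, used by itself, only produces $|\partial_E F_T|^2=O(1)$; what genuinely drives the quantity to zero is the qualitative (but decisive) fact that the ${\cal W}^2$-tail $r_M$ vanishes as $M\to\infty$, providing the extra $o(1)$ factor that beats the $T^{100}$ loss coming from $\partial_E\mathcal K_n$, $\partial_E\mathcal J_n$ with $|n|\sim T^{100}$.
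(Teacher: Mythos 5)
Your proof is correct and follows essentially the same strategy as the paper: it identifies the boundary terms, observes that $\partial_E\mathcal K_n,\partial_E\mathcal J_n$ cost a factor of $|n|\sim T^{100}$, then splits $u(0)$ at the midpoint of the truncation, controls the far piece by the vanishing $\mathcal W^2$-tail of $u(0)$, and controls the near piece by shift conjugation together with the moment bound of Theorem~2.22 of \cite{DamanikTch}. The only cosmetic differences are bookkeeping: the paper absorbs the factor $N$ directly into the moment weights (replacing $N$ by $|n-N|$ or $|n|$ on each half, and invoking the fourth moment $p=4$, $\eta=5/4$) whereas you carry the $T^{200}$ factor explicitly and use the third moment with $p=3$; also, your stated pair $p=3,\eta=1$ yielding $t^4$ is not quite consistent with the way the paper invokes Theorem~2.22 (where $p=2,\eta=3/2$ gives $t^3$ and $p=4,\eta=5/4$ gives $t^5$, i.e.\ the bound is $t^{p\eta}$ with $\eta>1$), so you should either take $p=3,\eta=4/3$ or just use $p=4$ as the paper does — in any case the power count has overwhelming slack, so the conclusion is unaffected.
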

\proof
In view of the expression of $\partial_E F_{T}$, combining with (\ref{partial_KJ_4}), we have, for $j=1,2$,
$$|\partial_E F^{(j)}_{T}(t,E)| \lesssim \sum_{i=1,2}\sum_{N=\pm\left(\floor*{T^{100}}+i\right)}|N u_{N}(t)|.$$
By proceeding in a similar fashion as in the previous lemma, we obtain, for $N  = \floor*{T^{100}}+1$, $\floor*{T^{100}}+2$ and $0 < t \leq T$,
\begin{eqnarray*}
|Nu_N(t)|   &\leq& N \|u(0)\|_{\ell^2(\Z)} \left(\sum_{n<\frac{N}{2}} |(e^{itL} \delta_N)_n|^2 \right)^{\frac12}+ N \|e^{itL}\delta_N\|_{\ell^2(\Z)} \left( \sum_{n\geq \frac{N}{2}}|u_n(0)|^2 \right)^{\frac12} \\
   &\leq&  2\|u(0)\|_{\ell^2(\Z)} \left(\sum_{n<\frac{N}{2}} (n-N)^2|(e^{itL} \delta_N)_n|^2 \right)^{\frac12}+ 2 \left( \sum_{n\geq \frac{N}{2}}n^2|u_n(0)|^2 \right)^{\frac12}.
\end{eqnarray*}
Since $u(0) \in {\cal W}^{2}(\Z)$,  we have
\begin{equation}\label{partial_F_T_part_2}
\lim_{N\rightarrow\infty} \left( \sum_{n\geq \frac{N}{2}}n^2|u_n(0)|^2 \right)^{\frac12}= 0.
\end{equation}
Similar to (\ref{F_T_part1}), we have
\begin{eqnarray*}
\sum_{n<\frac{N}{2}} (n-N)^2 |(e^{itL} \delta_N)_n|^2&=&\sum_{n+N<\frac{N}{2}}n^2|(e^{itL} \delta_N)_{n+N}|^2\\
&=&\sum_{n<-\frac{N}{2}}n^2|(S^{-N}e^{itL}S^{N} \delta_0)_{n}|^2\\
&\leq& \left(\sum_{n<-\frac{N}{2}}\frac{1}{n^2}\right) \left(\sum_{n\in\Z}n^4|(S^{-N}e^{itL}S^{N} \delta_0)_{n}|^2\right).
\end{eqnarray*}
Applying Theorem 2.22 in \cite{DamanikTch} with $p=4$ and $\eta=\frac54$, we get
$$\sum_{n<\frac{N}{2}} (n-N)^2 |(e^{itL} \delta_N)_n|^2\lesssim N^{-2} (t^5+1)\ll T^{-5}.$$
Combining with (\ref{partial_F_T_part_2}), we finish the proof.\qed

\smallskip

For any $n \in \Z$, let $\lan n \ran:=\sqrt{n^2+1}$ and for $k\in\Z$, we denote
\begin{equation}\label{a k}
a_{k} := \sup_{n \in \Z} \frac{1}{\lan n+k \ran \lan n \ran}\left|\int_\R [(\partial_{E}{\cal K}_{n+k})(\partial_{E} {\cal K}_n) + (\partial_{E}{\cal J}_{n+k}) (\partial_{E}{\cal J}_n)] \psi \, dE \right|.
\end{equation}
By (\ref{partial_KJ_4}), we can easily see that $a_k< 10 c^2_5$ for every $k\in\Z$.
The following lemma relates the lower bound of $\langle u(t) \rangle_{2}$ to $\{a_k\}$.

\begin{lemma} \label{norm growth}
There exists $c_8 > 0$, such that for sufficiently large $T > 0$, we have
\begin{eqnarray*}
 \langle u(T) \rangle_{2} \geq \frac{c_8 T}{(T^{\eta} +  \sum_{T^{\eta} < |k| \leq T^{101}} a_k)^{\frac{1}{2}}} - c_8^{-1}
\end{eqnarray*}
\end{lemma}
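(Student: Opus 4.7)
The plan is to obtain the bound by comparing two expressions for $\|\partial_E G_T(T,\cdot)\|_{\mathcal{L}^2(d\mu)}$: a Duhamel-based lower bound in terms of $T\|G_T(0,\cdot)\|_{\mathcal{L}^2(d\mu)}$, and a bilinear upper bound in the $u_n(T)$'s against the coefficients $a_k$. First, I would take the $\mathcal{L}^2(d\mu)$-norm of \eqref{partial G T} evaluated at $t=T$ and apply the reverse triangle inequality to isolate the dominant term $-iT e^{-iTE}G_T(0,E)$, obtaining
\begin{equation*}
\|\partial_E G_T(T,\cdot)\|_{\mathcal{L}^2(d\mu)} \geq T\|G_T(0,\cdot)\|_{\mathcal{L}^2(d\mu)} - \|\partial_E G_T(0,\cdot)\|_{\mathcal{L}^2(d\mu)} - I_1 - I_2,
\end{equation*}
with $I_1 := \int_0^T(T-s)\|F_T(s,\cdot)\|_{\mathcal{L}^2(d\mu)}\,ds$ and $I_2 := \int_0^T \|\partial_E F_T(s,\cdot)\|_{\mathcal{L}^2(d\mu)}\,ds$. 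Lemma \ref{F T small} immediately yields $I_1 \leq \tfrac{1}{2}\sqrt{c_7}\,T^{1/2}$; Lemma \ref{P_F T small} yields $I_2 = o(T)$; and \eqref{positive area} guarantees $\|G_T(0,\cdot)\|_{\mathcal{L}^2(d\mu)} \geq c_*/2$ for all large $T$, where $c_* := \|\mathcal{S}_\theta u\|_{\mathcal{L}^2(d\mu)} > 0$.

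Next, with $M := \lfloor T^{100}\rfloor + 1$, I would expand
\begin{equation*}
\|\partial_E G_T(t,\cdot)\|^2_{\mathcal{L}^2(d\mu)} = \sum_{|n|,|m|\leq M} u_n(t)\overline{u_m(t)}\, I_{n,m}, \quad I_{n,m} := \int_{\R}[\partial_E \mathcal{K}_n\,\partial_E\mathcal{K}_m + \partial_E\mathcal{J}_n\,\partial_E\mathcal{J}_m]\psi\,dE,
\end{equation*}
and invoke \eqref{a k} to get $|I_{n,m}| \leq a_{n-m}\langle n\rangle\langle m\rangle$. Grouping by $k = n - m$ and applying Cauchy--Schwarz $\sum_n \langle n\rangle |u_n|\cdot\langle n-k\rangle |u_{n-k}| \leq \langle u\rangle_2^2$ slice by slice, I reach
\begin{equation*}
\|\partial_E G_T(t,\cdot)\|^2_{\mathcal{L}^2(d\mu)} \leq \Bigl(\sum_{|k|\leq 2M+2} a_k\Bigr)\langle u(t)\rangle_2^2
\end{equation*}
for both $t = 0$ and $t = T$. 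The $t=T$ case is the upper bound that matches the Duhamel lower bound; the $t=0$ case controls the residual term $\|\partial_E G_T(0,\cdot)\|_{\mathcal{L}^2(d\mu)} \leq \bigl(\sum_{|k|\leq 2M+2} a_k\bigr)^{1/2}\langle u(0)\rangle_2$ carried over from the previous paragraph.

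Finally, splitting the $a_k$-sum at $|k|=T^\eta$, using the uniform bound $a_k \leq 10 c_5^2$ from \eqref{partial_KJ_4}, and noting $2M+2 \leq T^{101}$ for large $T$, I would produce a constant $C > 0$ such that $\sum_{|k|\leq 2M+2} a_k \leq C\,\Sigma_T$ where $\Sigma_T := T^\eta + \sum_{T^\eta < |k|\leq T^{101}} a_k$. Combining the two preceding paragraphs,
\begin{equation*}
\sqrt{C\Sigma_T}\,\langle u(T)\rangle_2 \geq T\|G_T(0,\cdot)\|_{\mathcal{L}^2(d\mu)} - \sqrt{C\Sigma_T}\,\langle u(0)\rangle_2 - \tfrac{1}{2}\sqrt{c_7}\,T^{1/2} - o(T);
\end{equation*}
for $T$ sufficiently large the Duhamel numerator $T\|G_T(0,\cdot)\|-\tfrac12\sqrt{c_7}T^{1/2}-o(T)$ exceeds $c_*T/4$, so dividing by $\sqrt{C\Sigma_T}$ and setting $c_8 := \min\bigl(c_*/(4\sqrt{C}),\,\langle u(0)\rangle_2^{-1}\bigr)$ produces the claimed estimate. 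The main obstacle I anticipate is the control of $\|\partial_E G_T(0,\cdot)\|_{\mathcal{L}^2(d\mu)}$: since $u(0)\in\mathcal{W}^2(\Z)$ does not force $\sum|n|\,|u_n(0)|<\infty$, a pointwise bound via $|\partial_E \mathcal{K}_n|\lesssim|n|$ would fail; the cure is to run the same bilinear $a_k$-argument symmetrically at $t=0$, so the resulting $\sqrt{\Sigma_T}$ cancels after division and the $\langle u(0)\rangle_2$ piece becomes a bounded additive term that gets absorbed into $c_8^{-1}$.
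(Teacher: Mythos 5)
Your proposal is correct and takes essentially the same route as the paper: the paper likewise expands $\|\partial_E G_T(t,\cdot)\|^2_{\mathcal{L}^2(d\mu)}$ as a bilinear form in the $u_n(t)$ with coefficients $a_{m-n}$, splits at $|m-n|=T^\eta$ to produce the bound $(40c_5^2T^\eta+\sum_{T^\eta<|k|\le T^{101}}a_k)\langle u(t)\rangle_2^2 \ge \|\partial_E G_T(t,\cdot)\|^2$ at both $t=0$ and $t=T$, and combines this with the reverse triangle inequality applied to \eqref{partial G T}, \eqref{positive area}, and Lemmas~\ref{F T small}, \ref{P_F T small}. The only cosmetic difference is that you package the $|k|\le T^\eta$ tail as a multiplicative constant $C$ in front of $\Sigma_T$ rather than displaying the additive $40c_5^2T^\eta$ bound explicitly.
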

\proof By direct calculations, we can see
\begin{eqnarray*}
&&\|\partial_{E}G_{T}(T,\cdot)\|^2_{{\cal L}^2(d\mu)}\\
&=& \sum_{|m|,|n| \leq \floor*{ T^{100}}+1} \lan m\ran \lan n \ran u_m(T) {\overline u}_n(T) \int_\R \frac{1}{ \lan m\ran \lan n \ran}[(\partial_{E}{\cal K}_{m}) (\partial_{E} {\cal K}_n) + (\partial_{E}{\cal J}_m) (\partial_{E}{\cal J}_n)]\psi \, dE\\
&\leq&\sum_{|m|,|n| \leq \floor*{ T^{100}}+1}  \lan m\ran \lan n \ran |u_m(T)| |{\overline u}_n(T)| \  a_{m-n}.
\end{eqnarray*}
We can decompose $\|\partial_{E}G_{T}(T,\cdot)\|^2_{{\cal L}^2(d\mu)}$ into $I_1 + I_2$, corresponding to the summations $\sum_{|m|,|n| \leq \floor*{ T^{100}}+1\atop{|m-n| > T^{\eta}}}$ and $\sum_{|m|,|n| \leq \floor*{ T^{100}}+1\atop{|m-n| \leq T^{\eta}}}$ respectively. Since $a_k< 10 c^2_5$ for every $k\in\Z$, we have
\begin{equation}\label{I_2}
|I_2| \leq 10 c^2_5 \sum_{|k|\leq T^{\eta}}\sum_{n \in \Z} \lan n+k\ran \lan n\ran |u_{n+k}(T)| |\bar u_n(T)|
<40 c^2_5 T^{\eta}\langle u(T) \rangle_{2}^{2},
\end{equation}
and in view of Cauchy inequality, we obtain
\begin{equation}\label{I_1}
|I_1| \leq \sum_{T^{\eta} < |k| \leq T^{101}} a_k \langle u(T) \rangle_{2}^2.
\end{equation}
Then, by combining (\ref{I_2}) and (\ref{I_1}),
\begin{equation}\label{last term}
\left(40 c^2_5 T^{\eta} +  \sum_{T^{\eta} < |k| \leq T^{101}} a_k\right) \langle u(T) \rangle_{2}^2 \geq \|\partial_{E}G_{T}(T,E)\|_{{\cal L}^2(d\mu)}^{2}.
\end{equation}
Similarly, we can see that
\begin{equation} \label{last term 2}
\left(40 c^2_5 T^{\eta} +  \sum_{T^{\eta} < |k| \leq T^{101}} a_k\right) \langle u(0) \rangle_{2}^2 \geq \|\partial_{E}G_{T}(0,E)\|_{{\cal L}^2(d\mu)}^{2}.
\end{equation}


In view of (\ref{positive area}), we have that
$\lim_{T \to \infty}\|G_{T}(0,E)\|_{{\cal L}^2(d\mu)} = \|G(0,E)\|_{{\cal L}^2(d\mu)} > 0$.
So, combining (\ref{partial G T}), \eqref{last term 2}, Lemma \ref{F T small} and \ref{P_F T small},
we can find a constant $c'_8> 0$ such that for sufficiently large $T > 0$,
\begin{eqnarray*}
\|\partial_{E}G_{T}(T,E)\|_{{\cal L}^2(d\mu)} \geq c'_8 T  - \left(40 c^2_4 T^{\eta} +  \sum_{T^{\eta} < |k| \leq T^{101}} a_k\right)^{\frac12} \langle u(0) \rangle_{2}.
\end{eqnarray*}
The lemma follows together with (\ref{last term}).\qed

\smallskip

Given $l\in\Z_+$ large enough, let  ${\cal M}_{l}:=\left[Q_{l}^{8\tau_0}, \,  \min\left(Q_{l+1}^{\frac{\tau_1}{16}}, \overline{Q}_{l+1}^{\frac{\nu}{16}}\right)\right]$.  The following lemma shows that $\{a_k\}_{k}$ is summable on the sequence of intervals $\{{\cal M}_{l}\}$.

\begin{lemma}\label{hatKJ}
 For sufficiently large $l$, for any $k \in \Z\setminus\{0\}$ such that $|k| \in{\cal M}_{l}$, we have $a_k \leq |k|^{-\frac32}$.
\end{lemma}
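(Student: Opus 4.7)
The plan is to regard the integral in \eqref{a k} as an oscillatory integral in $E$ with phase $\Phi:=\phi^{[n+k]}-\phi^{[n]}$, to exploit the lower bound $|\partial_E\Phi|\gtrsim|k|$ afforded by Proposition \ref{prop reducibility}(3), and to perform two integrations by parts. Since $\phi$, $\mathcal{B}_n$, $\mathcal{C}_n$ are only $C^{r-2}$ in the Whitney sense on $supp(\psi)$ and the weight $\psi$ is a possibly non-smooth infinite product, all three objects must first be replaced by their level-$l$ smooth approximants $\phi_l$, $\mathcal{B}_n^{(l)}$, $\mathcal{C}_n^{(l)}$, and the measure $\psi\,dE$ by $\psi_l\,dE$.

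Using \eqref{derivative_KnJn} and trigonometric identities, the integrand of $a_k$ equals
$$
(\mathcal{B}_{n+k}\mathcal{B}_n+\mathcal{C}_{n+k}\mathcal{C}_n)\cos\Phi\;-\;(\mathcal{B}_{n+k}\mathcal{C}_n-\mathcal{C}_{n+k}\mathcal{B}_n)\sin\Phi,
$$
where $\Phi(E,\theta)=\pm\phi^{[|k|]}(E,\theta')$ for a suitably shifted $\theta'$, so Proposition \ref{prop reducibility}(3) yields $|\partial_E\Phi_l|>C_1|k|$ for $|k|$ and $l$ large. By Lemma \ref{whitney diff} and \eqref{beta_gamma_2}, replacing the Whitney objects by their approximants produces a pointwise error in the integrand of size $\lesssim |k|^3 e^{-\overline{Q}_l^{D_2}}$ (one factor of $|k|$ arising from the difference $\phi^{[|k|]}-\phi_l^{[|k|]}$ of Birkhoff sums inside the cosine/sine). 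By \eqref{error_psi} and Lemma \ref{cor_measure}, the measure replacement costs $\lesssim(n+k)n\,(Q_{l+1}^{-\tau_1/4}+\overline{Q}_{l+1}^{-\nu/2})$. After dividing by $\langle n+k\rangle\langle n\rangle$, both errors are $\ll|k|^{-3/2}$ on $\mathcal{M}_l$, because $|k|\leq\min(Q_{l+1}^{\tau_1/16},\overline{Q}_{l+1}^{\nu/16})$.

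On the remaining smoothed integral I integrate by parts twice using $\cos\Phi_l=(\partial_E\Phi_l)^{-1}\partial_E\sin\Phi_l$ (and the sine counterpart). The condition $\partial_E^i\chi_k|_{\partial J_k}=0$ for $i=0,1,2,3$ ensures that every boundary term vanishes at both steps. Setting $F:=(\mathcal{B}_{n+k}^{(l)}\mathcal{B}_n^{(l)}+\mathcal{C}_{n+k}^{(l)}\mathcal{C}_n^{(l)})\psi_l$, the twice-IBPed integrand is schematically bounded by $|F|_{C^2}\cdot|\Phi_l|_{C^3}^2/|\partial_E\Phi_l|^2$ plus analogous terms with fewer derivatives on $F$. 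By \eqref{partial_KJ_1} one has $|\mathcal{B}_n^{(l)}|_{C^2},|\mathcal{C}_n^{(l)}|_{C^2}\leq c_5 n$; by Proposition \ref{prop_psi} and the Leibniz rule, $|\psi_l|_{C^2}\lesssim l^2 Q_l^{2\tau_1}$; by \eqref{partial_phi_2}, $|\Phi_l|_{C^3}\lesssim|k|$. Combined with $|\partial_E\Phi_l|\gtrsim|k|$, these estimates give a smoothed integral $\lesssim(n+k)n\cdot l^2 Q_l^{2\tau_1}\,|k|^{-2}$. Dividing by $\langle n+k\rangle\langle n\rangle$ yields a bound $\lesssim l^2 Q_l^{2\tau_1}|k|^{-2}\leq|k|^{-3/2}$, the last inequality holding on $\mathcal{M}_l$ thanks to $|k|\geq Q_l^{8\tau_0}\geq Q_l^{8\tau_1}$ (so that $|k|^{1/2}\geq Q_l^{4\tau_1}\gg l^2 Q_l^{2\tau_1}$ for $l$ large).

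The main obstacle is the trade-off between the $C^2$ smoothing cost $\sim Q_l^{2\tau_1}$ of $\psi_l$ (unavoidable because $\psi$ is too irregular to differentiate directly) and the gain $|k|^{-2}$ from the two integrations by parts, while simultaneously keeping both the approximant error and the $\psi\mapsto\psi_l$ truncation error below $|k|^{-3/2}$. This is exactly what dictates the shape of the window $\mathcal{M}_l=[Q_l^{8\tau_0},\min(Q_{l+1}^{\tau_1/16},\overline{Q}_{l+1}^{\nu/16})]$: the lower endpoint $Q_l^{8\tau_0}$ is chosen so that the IBP gain beats the smoothing cost, while the upper endpoints ensure the measure-truncation error is dominated by $|k|^{-3/2}$.
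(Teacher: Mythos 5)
Your proof follows the same strategy as the paper's: write the integrand as a cosine/sine combination with phase $\phi^{[k]}$, pass to the level-$l$ smooth approximants $\phi_l$, ${\cal B}_n^{(l)}$, ${\cal C}_n^{(l)}$, $\psi_l$ (with errors controlled via \eqref{beta_gamma_2}, \eqref{prop_error_2}, \eqref{error_psi} and the definition of ${\cal M}_l$), and then integrate by parts twice using the lower bound $\partial_E\phi_l^{[k]}\gtrsim|k|$ from Proposition~\ref{prop reducibility}(3), with the boundary terms killed by the vanishing of $\chi$ and its derivatives on $\partial J_k$. A few bookkeeping constants are loose (the paper shows $|\psi_l|_{C^2}\lesssim l^2Q_l^{\tau_1}$ rather than $l^2Q_l^{2\tau_1}$, and the schematic formula ``$|F|_{C^2}\,|\Phi_l|_{C^3}^2/|\partial_E\Phi_l|^2$'' does not literally yield the $|k|^{-2}$ decay you then invoke — the correct count is roughly $|F|_{C^2}/|\partial_E\Phi_l|^2$ plus lower-order terms), but these do not affect the argument's validity.
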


\proof Recall the expressions of $\partial_E {\cal K}_n$ and $\partial_E {\cal J}_n$ in (\ref{derivative_KnJn}). Then for fixed $m,n\in\Z\setminus\{0\}$ with $ |m-n|\in {\cal M}_{l}$, we have $\displaystyle \frac{1}{\lan m\ran \lan n\ran}\int_\R [(\partial_E {\cal K}_m)(\partial_E {\cal K}_n)+(\partial_E {\cal J}_m)(\partial_E {\cal J}_n)] \psi \, dE ={\cal P}+{\cal T}$, with
\begin{eqnarray*}
{\cal P}&:=&\frac{1}{\lan m\ran \lan n\ran}\int_\R ({\cal B}_m {\cal B}_n + {\cal C}_m {\cal C}_n)\cos(\phi^{[m]}-\phi^{[n]})\cdot\psi\, dE, \\
{\cal T}&:=&\frac{1}{\lan m\ran \lan n\ran}\int_\R ({\cal B}_m {\cal C}_n - {\cal C}_m {\cal B}_n)\sin(\phi^{[m]}-\phi^{[n]})\cdot\psi\, dE.
\end{eqnarray*}
We are going to show that $|{\cal P}|$, $|{\cal T}| \leq \frac12 |m-n|^{-\frac32}$. We will detail the estimate of ${\cal P}$, and one can estimate ${\cal T}$ in a similar way.

Noting that $\phi^{[m]}(\theta)-\phi^{[n]}(\theta)=\phi^{[m-n]}(\theta+n\alpha)$,
we have
$${\cal P} = \int_\R h(E,\theta) \psi(E) \cos \phi^{[m-n]}(E,\theta+n\alpha)dE,$$
with $\displaystyle h:= \frac1{\lan m\ran \lan n\ran}({\cal B}_m {\cal B}_n + {\cal C}_m {\cal C}_n)$. We can consider the approximated integral
$${\cal P}_l = \int_\R h_l(E,\theta)\psi_{l}(E) \cos \phi_l^{[m-n]}(E,\theta+n\alpha) dE$$
instead, where
$\displaystyle h_l := \frac1{\lan m\ran \lan n\ran}({\cal B}_m^{(l)} {\cal B}_n^{(l)} + {\cal C}_m^{(l)} {\cal C}_n^{(l)})$. Indeed, in view of $(\ref{partial_KJ_1}),  (\ref{partial_KJ_2})$, we have $|h_l - h|_{J_k\cap  supp(\psi)} \leq 4 c_5^2 e^{-\overline{Q}_l^{D_2}}$ and
\begin{equation}\label{h_l h 1}
|h|_{J_k \cap  supp(\psi)}, \ |h_l|_{C^2(J_k \cap \overline{\Omega}_l)} \leq 2c_5^2.
\end{equation}
By Lemma \ref{whitney diff}, $|\phi_l^{[m-n]} - \phi^{[m-n]}|_{J_k\cap \overline{\Omega}}< c_2 |m-n| e^{-\overline{Q}_l^{D_2}}$.
Since $|m-n|\in{\cal M}_l$ with ${\cal M}_l$ defined by the $({\cA}, {\cA}, {\cA}^{22}, {\cA}^{21})-$admissible subsequence $(Q_l)$, we can see that,
 for large $l$,
$$|m-n|e^{-\overline{Q}_l^{D_2}}\leq Q_{l+1}^{\frac{\tau_1}{16}}e^{-Q_{l+1}^{D_2 {\cal A}^{-21}}} \leq Q_{l+1}^{-\tau_1}\leq |m-n|^{-5}.$$
And then
 $$\left|h_l(E,\theta)\cos\phi_l^{[m-n]}(E,\theta+n\alpha) - h(E,\theta)\cos\phi^{[m-n]}(E,\theta+n\alpha)\right|_{J_k\cap supp(\psi)}<|m-n|^{-4}.$$
Combing with Proposition \ref{prop_psi}, we can see that
$$|{\cal P}-{\cal P}_l| \leq 5 |m-n|^{-4} + 2 c_5^2 |\psi-\psi_l|_{L^1(\R)} \leq |m-n|^{-2}. $$

To compute ${\cal P}_l$, we apply the integration by parts on each $J_k$.
Recalling the construction of $C^\infty$ function $\chi$,
we have $h_l=\partial_E h_l=0$ at both boundary points of $J_k$.
So we have
\begin{eqnarray}
&&\int_{J_k} h_l(E,\theta) \psi_{l}(E) \cos \phi_l^{[m-n]}(E,\theta+n\alpha) dE\nonumber\\
&=& -\int_{J_k} \partial_E\left( \frac{h_l(E,\theta) \psi_{l}(E)}{\partial_E \phi_l^{[m-n]}(E,\theta+n\alpha)}\right ) \sin \phi_l^{[m-n]}(E,\theta+n\alpha) dE\nonumber\\
&=&-\int_{J_k}  \frac{\partial_E (h_l(E,\theta) \psi_{l}(E))}{\partial_E \phi_l^{[m-n]}(E,\theta+n\alpha)}  \sin \phi_l^{[m-n]}(E,\theta+n\alpha) dE\label{int_by_parts_01}\\
&& +\int_{J_k}  \frac{h_l(E,\theta) \psi_{l}(E)\cdot \partial_E^2 \phi_l^{[m-n]}(E,\theta+n\alpha)}{(\partial_E \phi_l^{[m-n]}(E,\theta+n\alpha))^2} \sin \phi_l^{[m-n]}(E,\theta+n\alpha) dE.\label{int_by_parts_02}
\end{eqnarray}
For the integrals in (\ref{int_by_parts_01}) and (\ref{int_by_parts_02}), by applying again the integration by parts, and noting that $\partial_E h_l=0$ at each edge point of $J_k$, we have
\begin{eqnarray*}
&&\int_{J_k}  \frac{\partial_E (h_l(E,\theta) \psi_{l}(E))}{\partial_E \phi_l^{[m-n]}(E,\theta+n\alpha)}  \sin \phi_l^{[m-n]}(E,\theta+n\alpha) dE\\
&=& \int_{J_k} \partial_E\left( \frac{\partial_E (h_l(E,\theta) \psi_{l}(E))}{(\partial_E \phi_l^{[m-n]}(E,\theta+n\alpha))^2}\right ) \cos \phi_l^{[m-n]}(E,\theta+n\alpha) dE,\\
&&\int_{J_k}  \frac{h_l(E,\theta) \psi_{l}(E)\cdot \partial_E^2 \phi_l^{[m-n]}(E,\theta+n\alpha)}{(\partial_E \phi_l^{[m-n]}(E,\theta+n\alpha))^2} \sin \phi_l^{[m-n]}(E,\theta+n\alpha) dE\\
&=&-\int_{J_k} \partial_E\left( \frac{h_l(E,\theta) \psi_{l}(E)\cdot \partial_E^2 \phi_l^{[m-n]}(E,\theta+n\alpha)}{(\partial_E \phi_l^{[m-n]}(E,\theta+n\alpha))^3}\right ) \cos \phi_l^{[m-n]}(E,\theta+n\alpha) dE.
\end{eqnarray*}
By direct calculations using that $|\psi_{l}|_{C^2(\overline{\Omega}_l)}\lesssim l^2 Q_l^{\tau_1}$, and the estimates of $h_l$(see (\ref{h_l h 1})), $\phi_l^{[m-n]}$(see (\ref{partial_phi_1}) and (\ref{partial_phi_2})), we can find a constant $c_9>0$ such that
\begin{eqnarray*}
&& \left|\partial_E\left( \frac{\partial_E (h_l(E,\theta) \psi_{l}(E))}{(\partial_E \phi_l^{[m-n]}(E,\theta+n\alpha))^2}\right )\right| \\
&=&\left|\frac{\partial^2_E (h_l(E,\theta) \psi_{l}(E))}{(\partial_E \phi_l^{[m-n]}(E,\theta+n\alpha))^2}
-\frac{2\partial_E (h_l(E,\theta) \psi_{l}(E))\cdot (\partial^2_E \phi_l^{[m-n]}(E,\theta+n\alpha))}{(\partial_E \phi_l^{[m-n]}(E,\theta+n\alpha))^3}\right|\\
&\leq& c_9 l^2 Q_{l}^{\tau_1}|m-n|^{-2},\\[1mm]
 &&\left|\partial_E\left( \frac{h_l(E,\theta) \psi_{l}(E)\cdot \partial_E^2 \phi_l^{[m-n]}(E,\theta+n\alpha)}{(\partial_E \phi_l^{[m-n]}(E,\theta+n\alpha))^3}\right)\right| \\
 &=&\left|\frac{\partial_E(h_l(E,\theta) \psi_{l}(E))\cdot \partial_E^2 \phi_l^{[m-n]}(E,\theta+n\alpha)}{(\partial_E \phi_l^{[m-n]}(E,\theta+n\alpha))^3}+\frac{h_l(E,\theta) \psi_{l}(E)\cdot \partial_E^3 \phi_l^{[m-n]}(E,\theta+n\alpha)}{(\partial_E \phi_l^{[m-n]}(E,\theta+n\alpha))^3}\right. \\
 & &\left.-\frac{3 h_l(E,\theta) \psi_{l}(E)\cdot (\partial^2_E \phi_l^{[m-n]}(E,\theta+n\alpha))^2}{(\partial_E \phi_l^{[m-n]}(E,\theta+n\alpha))^4}\right|\\
&\leq& c_9 l^2 Q_{l}^{\tau_1}|m-n|^{-2} .
\end{eqnarray*}
By $\tau_0 \geq \tau_1$ and $|m-n| \in \cal{M}_l$, both expressions above can be bounded by $\frac{1}{10}|m-n|^{-\frac32}$ for sufficiently large $l$.
This concludes that $|{\cal P}|\leq \frac12|m-n|^{-\frac32}$.\qed

 As a result of Lemma \ref{hatKJ}, we have
  $\sum_l \sum_{|k| \in {\cal M}_{l}}  a_k <\infty$.
Combined with Lemma \ref{norm growth}, we immediately obtain the following.
\begin{cor} \label{cor 1}
There exists $c_{10} > 0$, such that for any sufficiently large $l \geq 0$, for any $T > 0$ such that $[T^{\eta}, T^{101}] \subset  \cal M_l$, we have
$ \langle u(T)  \rangle_2 \geq c_{10} T^{1-\eta}$.
\end{cor}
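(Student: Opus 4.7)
The plan is a direct combination of Lemma \ref{norm growth} and Lemma \ref{hatKJ}. Under the hypothesis $[T^{\eta}, T^{101}] \subset \mathcal{M}_{l}$, every integer $k$ with $T^{\eta} < |k| \leq T^{101}$ satisfies $|k| \in \mathcal{M}_{l}$, so Lemma \ref{hatKJ} applies uniformly and gives $a_{k} \leq |k|^{-3/2}$ for each such $k$ (assuming $l$ is sufficiently large).

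First, I would estimate the tail sum that appears in Lemma \ref{norm growth}. By the pointwise bound on $a_{k}$ and a direct comparison with $\int_{T^{\eta}}^{\infty} x^{-3/2}\,dx = 2 T^{-\eta/2}$, there exists a numerical constant $c$ with
\[
\sum_{T^{\eta} < |k| \leq T^{101}} a_{k} \;\leq\; 2 \sum_{k > T^{\eta}} k^{-3/2} \;\leq\; c\, T^{-\eta/2}.
\]
Hence $T^{\eta} + \sum_{T^{\eta} < |k| \leq T^{101}} a_{k} \leq 2 T^{\eta}$ once $T$ is large enough that $c T^{-\eta/2} \leq T^{\eta}$.

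Plugging this into the conclusion of Lemma \ref{norm growth} yields
\[
\langle u(T) \rangle_{2} \;\geq\; \frac{c_{8}\, T}{(2 T^{\eta})^{1/2}} - c_{8}^{-1} \;=\; \tfrac{c_{8}}{\sqrt{2}}\, T^{1 - \eta/2} - c_{8}^{-1}.
\]
Since $T^{1-\eta/2}$ dominates $T^{1-\eta}$ as $T \to \infty$, one can absorb the additive constant $c_{8}^{-1}$ and choose a constant $c_{10} > 0$ (independent of $l$ and $T$) so that $\langle u(T) \rangle_{2} \geq c_{10}\, T^{1-\eta}$ for all sufficiently large $l$ and all $T$ with $[T^{\eta}, T^{101}] \subset \mathcal{M}_{l}$.

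There is no real obstacle here; the corollary is a bookkeeping step that packages the two preceding lemmas. All the substantive work (the integration-by-parts argument exploiting the $C^{2}$ smoothness of $\psi_{l}$ and the lower bound $\partial_{E}\phi_{l}^{[m-n]} \gtrsim |m-n|$, together with the KAM approximation controlling $\mathcal{P} - \mathcal{P}_{l}$) has already been carried out in Lemma \ref{hatKJ}; the only mild subtlety to check is that the threshold $T^{\eta}$ in Lemma \ref{norm growth} is consistent with the lower endpoint of the interval $\mathcal{M}_{l}$, which is guaranteed by the hypothesis $[T^{\eta}, T^{101}] \subset \mathcal{M}_{l}$.
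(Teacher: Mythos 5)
Your proposal is correct and follows essentially the same route as the paper: combine Lemma \ref{norm growth} with Lemma \ref{hatKJ}, using the hypothesis $[T^{\eta},T^{101}]\subset\mathcal{M}_l$ to guarantee that Lemma \ref{hatKJ} applies to every $k$ appearing in the tail sum. You are slightly more explicit than the paper (which merely remarks that $\sum_l\sum_{|k|\in\mathcal{M}_l}a_k<\infty$ and invokes Lemma \ref{norm growth}), and in fact your estimate yields the stronger exponent $T^{1-\eta/2}$; the only implicit step you should perhaps flag is that the condition $T^{\eta}\geq Q_l^{8\tau_0}$ forces $T$ large once $l$ is large, which is what justifies absorbing $c_8^{-1}$ and invoking Lemma \ref{norm growth} (which itself requires $T$ large).
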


In a similar way, with $(Q_l)$ replaced by $(R_l)$, we can use $\tilde{Z}$, $\tilde{\phi}$ and $\tilde\psi$(see Remark \ref{rmq_tilde} and \ref{rmq_tilde_psi}) to define truncated spectral transformation for the operator $L$ in the same way.
Similarly, we define $\tilde{\cal M}_l:=\left[R_{l}^{8\tilde{\tau}_0}, \min(R_{l+1}^{\frac{\tau_1}{16}}, \overline{R}_{l+1}^{\frac{\nu}{16}})\right]$.
With the same proof, we have the analogue corollary.

\begin{cor} \label{cor 2}
There exists $c_{11} > 0$, such that for any sufficiently large $l \geq 0$, for any $T > 0$ such that $[T^{\eta}, T^{101}] \subset  \tilde{\cal M}_l$, we have
$ \langle u(T)  \rangle_2 \geq c_{11} T^{1-\eta}$.
\end{cor}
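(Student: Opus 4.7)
The plan is to repeat the entire argument leading to Corollary~\ref{cor 1} verbatim, substituting the $(\cA,\cA,\cA^{22},\cA^{21})$-admissible subsequence $(R_l)$ for $(Q_l)$ and the tilde-reducibility data $\tilde Z$, $\tilde\phi$, $\tilde\psi$ introduced in Remarks~\ref{rmq_tilde} and~\ref{rmq_tilde_psi} for $Z$, $\phi$, $\psi$. First, I would define the modified spectral transformation $\tilde{\cal S}_\theta:{\cal W}^2(\Z)\to{\cal L}^2(\tilde\psi\,dE\otimes I_2)$ via the generalised eigenvectors $\tilde{\cal K}_n$, $\tilde{\cal J}_n$ built from $\tilde Z$ and $\tilde\phi$, form the truncations $\tilde{\cal S}_{\theta,M}$, and run the identity~\eqref{partial G T}. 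The analogues of Lemma~\ref{F T small}, Lemma~\ref{P_F T small} and the positivity statement~\eqref{positive area} go through unchanged, since their proofs use only the uniform-in-$n$ bounds on $\tilde{\cal K}_n$, $\tilde{\cal J}_n$ (which hold by Proposition~\ref{prop reducibility}(1) applied on $(\cup_k J_k)\cap\overline\Gamma$) together with Theorem~2.22 of~\cite{DamanikTch}, none of which depend on the particular admissible subsequence chosen.

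The essential step is the analogue of Lemma~\ref{hatKJ}: for sufficiently large $l$ and every $k\neq 0$ with $|k|\in\tilde{\cal M}_l$, the correlation
$$\tilde a_k := \sup_{n\in\Z}\frac{1}{\lan n+k\ran\lan n\ran}\left|\int_\R\bigl[(\partial_E\tilde{\cal K}_{n+k})(\partial_E\tilde{\cal K}_n) + (\partial_E\tilde{\cal J}_{n+k})(\partial_E\tilde{\cal J}_n)\bigr]\tilde\psi\, dE\right|$$
should satisfy $\tilde a_k\leq |k|^{-3/2}$. I would reproduce the two-fold integration-by-parts of Lemma~\ref{hatKJ} on each $J_k$, using the approximants $\tilde\phi_l^{[k]}$, $\tilde\psi_l$ in place of $\phi_l^{[k]}$, $\psi_l$. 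The needed ingredients are the $C^2$-bound $|\tilde\psi_l|_{C^2}\lesssim l^2 R_l^{\tau_1}$ (tilde-version of Proposition~\ref{prop_psi}), the gradient lower bound $\partial_E\tilde\phi_l^{[k]}\geq C_1|k|$ with uniform $C^r$ upper bound $c_1|k|$ (Proposition~\ref{prop reducibility}(3) applied to $(R_l)$), and the approximation error $|k|e^{-\overline R_l^{D_2}}\leq R_{l+1}^{-\tau_1}$ which follows from $R_{l+1}\leq\overline R_l^{\cA^{21}}$. The resulting boundary term is controlled by $l^2 R_l^{\tau_1}|k|^{-2}$, which is at most $\tfrac{1}{10}|k|^{-3/2}$ because $|k|\geq R_l^{8\tilde\tau_0}\geq R_l^{8\tau_1}$, using $\tilde\tau_0\geq\tau_2>\tau_1$ from~\eqref{tau_1tau_2}.

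The main ``obstacle,'' though it is really just a consistency check, is ensuring that the parameter choices in \eqref{tau_1tau_2}--\eqref{cal_A} are tight enough to make the tilde-scheme work symmetrically. The inequality $\tilde\tau_0\geq\tau_2>10^7\tau_0/\eta$ is precisely what guarantees that all the scale-separation estimates used above with $R$ in place of $Q$ go through; likewise $\cA>\cA_0(\tilde\tau_0,\nu,r)$ ensures Proposition~\ref{prop reducibility} is applicable along $(R_l)$. Given these verifications, the tilde-analogue of Lemma~\ref{hatKJ} yields $\sum_l\sum_{|k|\in\tilde{\cal M}_l}\tilde a_k<\infty$, and combining with the tilde-version of Lemma~\ref{norm growth} (whose proof is a pure Cauchy--Schwarz argument with no dependence on the admissible sequence), any $T$ with $[T^\eta,T^{101}]\subset\tilde{\cal M}_l$ at large $l$ makes $\sum_{T^\eta<|k|\leq T^{101}}\tilde a_k$ uniformly bounded, giving $\langle u(T)\rangle_2\geq c_{11}T^{1-\eta}$ for some $c_{11}>0$.
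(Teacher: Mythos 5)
Your proposal is correct and takes essentially the same approach as the paper, which simply asserts ``With the same proof, we have the analogue corollary'' after defining the tilde objects; your walk-through of the tilde-for-plain substitutions (checking that $\tilde\tau_0\geq\tau_1$, that $\cA>\cA_0(\tilde\tau_0,\nu,r)$, and that the boundary estimate $l^2 R_l^{\tau_1}|k|^{-2}\leq\tfrac{1}{10}|k|^{-3/2}$ follows from $|k|\geq R_l^{8\tilde\tau_0}$) is exactly the intended verification.
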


\begin{proof}[Proof of Theorem \ref{main theorem 2}]
Without loss of generality, we assume that $\eta \in (0,1)$. Recall that we have given $\tau_1$, $\tau_2$ and ${\cal A}$ in (\ref{tau_1tau_2}) and (\ref{cal_A}).
Note that for any $l \geq 1$ we have
$Q_{l+1}^{\tau_0} \geq R_{l}^{\cA\tau_0} \geq R_{l}^{\tilde{\tau}_0}$ since $\cA \geq \frac{\tilde{\tau}_0}{\tau_0}$. It is clear that $Q_{l}^{\tau_0} \leq R_{l}^{\tilde{\tau}_0}$ since $Q_{l} \leq R_{l}$ and $\tau_0 \leq \tilde{\tau}_0$.
Thus for any sufficiently large $T$, we have two (nonexclusive) alternates  :
\begin{enumerate}
\item There exists $l$ such that $R_{l}^{8\tilde{\tau}_0} \leq T^{\eta} < Q_{l+1}^{8\tau_0}$. Then $T^{101} < Q_{l+1}^{808\tau_0/\eta} < \min( R_{l+1}^{\frac{\tau_{2}}{16}}, \overline{R}_{l+1}^{\frac{\nu}{16}})$ since $R_{l+1} \geq Q_{l+1}$, $\tau_2 > \frac{10^7\tau_0}{\eta}$ and $\overline{R}_{l+1}^{\frac{\nu}{16}} \geq Q_{l+1}^{\frac{\nu\cA}{16}} > Q_{l+1}^{\frac{808\tau_0}{\eta}}$ for $\cA > \frac{10^{7}\tau_0}{\eta\nu}$.

\item There exists $l$ such that $Q_{l}^{8\tau_0} \leq T^{\eta} < R_{l}^{8\tilde{\tau}_0}$. Then $T^{101} < R_{l}^{808\tilde{\tau}_0/ \eta} < \min( Q_{l+1}^{\frac{\tau_1}{16}}, \overline{Q}_{l+1}^{\frac{\nu}{16}})$ since $R_l^{\frac{808\tilde{\tau}_0}{\eta}} < Q_{l+1}^{\frac{808\tilde{\tau}_0}{\eta\cA}} < \min( Q_{l+1}^{\frac{\tau_1}{16}}, \overline{Q}_{l+1}^{\frac{\nu}{16}})$. The last inequality follows from $\cA > \frac{10^{7}\tilde{\tau}_0}{\eta} \max\left( \tau_1^{-1}, \nu^{-1} \right)$ and $\overline{Q}_{l+1} > Q_{l+1}$.
\end{enumerate}
Thus for any sufficiently large $T$, there exists $l \geq 0$ such that $[T^{\eta}, T^{101}]$ is contained in either $\cal M_l$ or $\tilde{ \cal M_l}$.
The theorem follows from Corollary \ref{cor 1} and Corollary \ref{cor 2}.
\end{proof}

\subsection{Ballistic lower bound for $p\geq 0$} \label{The general case}

Till now we already have the lower bound for $p=2$ and $p=0$(which is well-known as the $\ell^2-$conservation).
Now we consider the general $p\geq 0$. We only have to show the lower bound
$$
\lim_{t\rightarrow\infty} \frac{\lan u(t)\ran_p^2}{t^{p-\eta}}=0, \quad \forall \eta>0, \;\ \forall u(0)\in {\cal W}^p(\Z)\setminus \{0\}
$$
for any even integer $p$.
Indeed, by Cauchy inequality, we have
\begin{eqnarray*}
\langle u \rangle_{p_2} \lesssim \langle u \rangle_{p_3}^{\frac{p_1-p_2}{p_1-p_3}} \langle u \rangle_{p_1}^{\frac{p_2-p_3}{p_1-p_3}} \;\  {\rm for \ any }  \;\ p_1 \geq p_2 \geq p_3  \geq 0.
\end{eqnarray*}
\begin{itemize}
  \item If $p> 2$, we can apply the above inequality for $p_1 = p$, $p_2 = 2\lfloor p/2 \rfloor$ and $p_3 = 0$, with the lower bound of $\lan u(t) \ran_{2\lfloor p/2 \rfloor}$ and the upper bound of $\lan u(t) \ran_{0}$ known. Then we get the lower bound in Theorem \ref{main theorem 2} for $\lan u(t) \ran_{p}$.
  \item If $0 < p < 2$, we can apply the above inequality for $p_1 = 4$, $p_2 = 2$ and $p_3 = p$, with the lower bound of $\lan u(t) \ran_{2}$ and the upper bound of $\lan u(t) \ran_{4}$ known. Then we get the lower bound in Theorem \ref{main theorem 2} for $u(0)\in{\cal W}^4(\Z)\setminus\{0\}$.
\end{itemize}

For the $p^{\rm th}$ moment bound, with $p$ an even integer, we need to make the following modifications to the proof presented above :
\begin{enumerate}
\item Let $r = p /2  + 2$. We consider $G_{T^{r}}(t,E)$ and $\partial_{E}^{r-2}G_{T^{r}}(t,E)$ instead of $G_{T}(t,E)$ and $\partial_EG_{T}(t,E)$.
\item Instead of Lemma \ref{F T small}, Lemma \ref{P_F T small}, we prove  that  $\lim_{T \to \infty }\sup_{0\leq t \leq T}\norm{\partial_E^{j}F_{T^{r}}(t,E)}_{{\cal L}^2(d\mu)} = 0$ for each $0 \leq j \leq r-2$.
\item We consider $a_{k,r} := \sup_{n \in \Z} \frac{1}{\lan n+k \ran^{r-2} \lan n \ran^{r-2}}\left|\int_\R [(\partial_{E}^{r-2}{\cal K}_{n+k})(\partial_{E}^{r-2} {\cal K}_n) + (\partial_{E}^{r-2}{\cal J}_{n+k}) (\partial_{E}^{r-2}{\cal J}_n)] \psi \, dE \right|$ instead of $a_k$ in \eqref{a k}, with $\partial_{E}^{r-2} {\cal K}_n$, $\partial_{E}^{r-2} {\cal J}_n$ given in (\ref{derivative_KnJn2}).
\item When verifying the summability of $a_{k,r}$ for $|k| \in \cal M_l$, we replace the estimates $\eqref{partial_KJ_1}-\eqref{partial_KJ_4}$ by $\eqref{2partial_KJ_1}-\eqref{2partial_KJ_4}$.
\end{enumerate}

\appendix

\section{Proof of Lemma \ref{lemma conjugation} and \ref{lemma_derivative}}\label{appendix}

In this section, the integer $r$ is fixed.
The implicit constants in the notations ``$\lesssim$" and ``$\gtrsim$" are allowed to depend on $r$.

\noindent{\bf Proof of Lemma \ref{lemma conjugation}}

We follow the proof of Proposition 4.1 (1) in \cite{AFK} and construct the conjugation cocycle $B$ using  iterations of algebraic conjugation, given by the following Lemma \ref{lemma smooth alg conj}.

\begin{lemma} \label{lemma smooth alg conj}
For every $D > 0$, there exist $C_0 = C_0(D)$, $\epsilon_1 = \epsilon_1(D) > 0$ such that the following is true. Given $\epsilon \in (0, \epsilon_1)$, $J_0$ an open interval, let $\cU \subset SL(2,\C) \times \C$ be the set of all $(A, \theta)$ such that  $|A|_{J_0}< D$, $|R_{\theta}^{-1}A - id|_{J_0} < \epsilon$ and $\varrho = \min(\frac{1}{4}, |(R_{2\theta} -id)^{-1}|_{J_0}^{-1} ) > \epsilon^{\frac{1}{r+4}}$, and let $E \mapsto (A(E),\theta(E))$ be a function in $C^{\omega}(J_0, \cU)$.  There exists a real symmetric holomorphic function $F : \cU \to SL(2,\C)$ such that, with
$$H:=1+\max_{j=1,\cdots, r}\left( |\partial_E^j\theta|_{J_0}^{\frac r j}, \, |\partial_E^{j}(R^{-1}_{\theta}A)|_{J_0}^{\frac r j}\epsilon^{-\frac r j} \right),$$
$B(E) = F(A(E), \theta(E))$ satisfies $BAB^{-1} = R_{\theta'}$ for some $\theta^{'} \in C^{\omega}(J_0, \C) $, and
$$|\partial_{E}^{j}(B-id)|_{J_0} < C_0\epsilon \varrho^{-j-1} H^{\frac{j}{r}}, \quad |\partial_{E}^{j}(\theta - \theta')|_{J_0} < C_0 \epsilon H^{\frac{j}{r}}, \quad j=0,\cdots, r.$$
\end{lemma}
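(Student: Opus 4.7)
The plan is to construct $B$ as the limit of an iterative "algebraic conjugation" scheme, in the spirit of Proposition 4.1(1) of \cite{AFK}. At each stage we write the current cocycle as $R_{\vartheta}(\mathrm{id}+\zeta)$ with $|\zeta|<\varepsilon_n$, and decompose $\zeta = \mathcal{Q}(\zeta)+\mathcal{N}(\zeta)$ where $\mathcal{Q}(\zeta)=(\zeta+J\zeta J)/2$ is the part commuting with the rotation action (which only modifies the rotation angle) and $\mathcal{N}(\zeta) = \zeta-\mathcal{Q}(\zeta)$ is the skew part. Seek $\mathrm{id}+b$ with $b$ small such that $(\mathrm{id}+b)^{-1}R_{\vartheta}(\mathrm{id}+\zeta)(\mathrm{id}+b)$ has skew part of size $O(\varepsilon_n^2 \varrho^{-1})$. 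The linearization reduces to the cohomological equation $R_{\vartheta}^{-1}bR_{\vartheta}-b = -\mathcal{N}(\zeta)$, which is solved by inverting the operator $R_{\vartheta}^{-1}(\cdot)R_{\vartheta}-\mathrm{id}$ restricted to skew elements. Passing to the complex basis diagonalising $R_{\vartheta}$, this operator becomes multiplication by $e^{\pm 2\mathrm{i}\vartheta}-1$, so its inverse has norm bounded by $|(R_{2\vartheta}-\mathrm{id})^{-1}|=\varrho^{-1}$.

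Iterating this step yields conjugating matrices $B_n$ depending holomorphically on $(A,\theta)$ with $\varepsilon_{n+1}\lesssim \varepsilon_n^2\varrho^{-1}$, so that $\varepsilon_n\leq(c\varepsilon\varrho^{-1})^{2^n}$. The non-resonance hypothesis $\varrho>\varepsilon^{1/(r+4)}$ gives $\varepsilon\varrho^{-1}<\varrho^{r+3}\ll 1$, which is amply enough for the infinite product $B:=\prod_n(\mathrm{id}+b_n)$ to converge and for the residual rotation angles to converge to some $\theta'$. Because each $b_n$ is built from $A$, $\theta$, and $(R_{2\theta}-\mathrm{id})^{-1}$ by composition and arithmetic operations that preserve reality, the limit $F(A,\theta):=B$ is a real symmetric holomorphic function on $\mathcal{U}$, giving $BAB^{-1}=R_{\theta'}$ with the $C^0$ bounds $|B-\mathrm{id}|_{J_0}\lesssim \varepsilon\varrho^{-1}$ and $|\theta-\theta'|_{J_0}\lesssim \varepsilon$.

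For the $C^r$ parameter estimates, I would differentiate the recursion in $E$ using the Faà di Bruno formula. The key observation is that $j$-th derivatives of $A$ and $\theta$ are controlled by $H^{j/r}\varepsilon$ and $H^{j/r}$ respectively (by definition of $H$), while each differentiation of the resolvent $(R_{2\vartheta_n}-\mathrm{id})^{-1}$ at stage $n$ costs a factor $\varrho_n^{-1}\lesssim\varrho^{-1}$ (since the angle $\vartheta_n$ stays within $O(\varepsilon)$ of $\theta$, so the non-resonance is preserved up to constants). Combining these, the $j$-th derivative of $b_n$ is bounded by $\varepsilon_n \varrho^{-j-1}H^{j/r}$ up to a polynomial in $n$ and $\varrho^{-1}$, and the doubly-exponential decay of $\varepsilon_n$ kills all such polynomial losses. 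Summing over $n$ gives the stated estimates, and the analogous bound for $\theta'-\theta$ comes from tracking the central parts $\mathcal{Q}(\zeta_n)$, whose sum converges with $j$-th derivative bounded by $\varepsilon H^{j/r}$.

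The main obstacle is bookkeeping: showing that the geometric squaring of the error at each step absorbs not only the factor $\varrho^{-1}$ from solving the cohomological equation but also all the derivative losses coming from differentiating inverses of $R_{2\vartheta_n}-\mathrm{id}$ and from Faà di Bruno multi-index combinatorics. The margin $\varrho>\varepsilon^{1/(r+4)}$ is calibrated precisely so that $\varepsilon\varrho^{-r-3}\ll 1$, which is what is needed for the first iteration already to yield a derivative bound better than $\varrho^{-r-1}H$ and for subsequent iterations to contribute only negligible corrections.
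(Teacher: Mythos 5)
Your proposal follows the same iterative algebraic-conjugation scheme as the paper's proof (and as Proposition 4.1(1) of \cite{AFK}): at each step write the matrix as a rotation times a small perturbation, kill the part that can be conjugated away by inverting a resolvent of norm $\varrho^{-1}$, push the remainder into an angle update, and iterate; the non-resonance hypothesis $\varrho>\epsilon^{1/(r+4)}$ is exactly what absorbs the $\varrho^{-r-3}$ losses from $r$ derivatives of that resolvent. In the paper the iterate is parametrised multiplicatively as $A^{(n)}=e^{v^{(n)}}R_{\theta^{(n)}}$, the conjugator is $e^{w^{(n)}}$ with $w^{(n)}$ determined by $(R_{2\theta^{(n)}}-\mathrm{id})^{-1}$, and the inductive bound is the (weaker but sufficient) geometric one $|v^{(n)}|\lesssim \varrho^{n}\epsilon$ rather than the super-exponential bound you invoke; both suffice, and your super-exponential estimate of $\varepsilon_n$ is compatible.

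One concrete error to fix: you have the labels of the two components reversed. With $\mathcal{Q}(\zeta):=(\zeta+J\zeta J)/2$, the image of $\mathcal{Q}$ consists of matrices that \emph{anti-commute} with $J$ (and with every $R_\vartheta$), while $\mathcal{N}(\zeta)=\zeta-\mathcal{Q}(\zeta)$ is the part that \emph{commutes} with $R_\vartheta$; indeed $\mathcal{Q}(R_\vartheta)=0$. So it is $\mathcal{Q}(\zeta)$, not $\mathcal{N}(\zeta)$, that you must kill via the conjugation equation, and $\mathcal{N}(\zeta)$ is what feeds into the angle update $\theta\mapsto\theta'$. As written, your cohomological equation $R_{\vartheta}^{-1}bR_{\vartheta}-b=-\mathcal{N}(\zeta)$ asks you to invert $\mathrm{Ad}_{R_\vartheta}-\mathrm{id}$ on the subspace where it vanishes. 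Swap $\mathcal{Q}$ and $\mathcal{N}$ throughout and the argument is correct; the eigenvalue computation $e^{\pm2\mathrm{i}\vartheta}-1$ you give applies precisely on the anti-commuting (i.e.\ $\mathcal{Q}$) part, in agreement with the bound $\varrho^{-1}$. (Also, the quadratic recursion should carry $\varrho^{-2}$ rather than $\varrho^{-1}$, since the conjugator itself has size $\varrho^{-1}\varepsilon_n$; this costs nothing given the stated margin.)
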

\proof In this proof, for each function depending on $E\in J_0$, the norm we are considering is always $|\cdot|_{J_0}$. For convenience, we will not present the subscript ``$J_0$".

Let $A^{(0)} := A$ and $\theta^{(0)} := \theta$.
Suppose that $A^{(n)}$ and $\theta^{(n)}$ are already defined. Let $v^{(n)}\in sl(2,\C)$ be small (thus unique) and satisfy that $A^{(n)} = e^{v^{(n)}}R_{\theta^{(n)}}$, with $v^{(n)}=\left( \begin{array}{cc}
x^{(n)} & y^{(n)} - 2\pi z^{(n)} \\
y^{(n)} + 2\pi z^{(n)} & -x^{(n)}
\end{array}\right)$.
We define $\left(\begin{array}{c}
       \tilde{x}^{(n)} \\
       \tilde{y}^{(n)}
     \end{array}\right):=(R_{2\theta^{(n)}}-id)^{-1}\left(\begin{array}{c}
       x^{(n)} \\
       y^{(n)}
     \end{array}\right)$ and $w^{(n)} := \left(\begin{array}{cc}
                                               - \tilde{x}^{(n)} &  -  \tilde{y}^{(n)}\\
                                                - \tilde{y}^{(n)} &  \tilde{x}^{(n)}
                                             \end{array}\right)$.
Let $A^{(n+1)} =  (e^{w^{(n)}} )^{-1}A^{(n)}e^{w^{(n)}}$ and $\theta^{(n+1)} = \theta^{(n)} + z^{(n)}$.
Then we have $$A^{(n+1)} = (e^{w^{(n)}})^{-1} e^{v^{(n)}} R_{\theta^{(n)}} e^{w^{(n)}} =( e^{w^{(n)}} )^{-1}e^{v^{(n)}}R_{\theta^{(n)}}e^{w^{(n)}}R_{-\theta^{(n+1)}}R_{\theta^{(n+1)}}.$$
Note that by our choice $R_{\theta^{(n)}}w^{(n)}R_{-\theta^{(n)}} = R_{2\theta^{(n)}}w^{(n)}$.
So $v^{(n+1)}$ is defined by $$e^{v^{(n+1)}} = (e^{w^{(n)}})^{-1} e^{v^{(n)}}R_{\theta^{(n)}}e^{w^{(n)}}R_{-\theta^{(n+1)}} = (e^{w^{(n)}})^{-1} e^{v^{(n)}} e^{R_{2\theta^{(n)}}w^{(n)}} R_{-z^{(n)}}.$$
Notice that we have
\begin{equation}\label{lem 5 term A2}
|\partial_{E}^{j}z^{(n)} |\lesssim |\partial_{E}^{j}v^{(n)}|,  \quad  \forall 0 \leq j \leq r.
\end{equation}

We consider the $k^{\rm th}$-order differential of $e^{v^{(n+1)}}$. It can be decomposed into $I_1+I_2$, where
\begin{eqnarray*}
I_1 &=&  \partial_E^{k}((e^{w^{(n)}})^{-1}) e^{v^{(n)}} e^{R_{2\theta^{(n)}}w^{(n)}} R_{-z^{(n)}} +  (e^{w^{(n)}})^{-1}\partial_E^{k} e^{v^{(n)}} e^{R_{2\theta^{(n)}}w^{(n)}} R_{-z^{(n)}} \\
& & + \, (e^{w^{(n)}})^{-1} e^{v^{(n)}} \partial_E^{k}e^{R_{2\theta^{(n)}}w^{(n)}} R_{-z^{(n)}} + (e^{w^{(n)}})^{-1} e^{v^{(n)}} e^{R_{2\theta^{(n)}}w^{(n)}} \partial_E^{k} R_{-z^{(n)}}
\end{eqnarray*}
and $I_2$ is the sum of terms involving at least two differentiated components.
By the definition of $w^{(n)}$, we have the identity
\begin{equation}\label{lem 5 term A1}
v^{(n)} + (R_{2\theta^{(n)}}-id) w^{(n)} - z^{(n)}\left(\begin{array}{cc}
                                               0 &  -  2\pi \\
                                                2\pi &  0
                                             \end{array}\right) = 0.
\end{equation}
By applying \eqref{lem 5 term A1}, we obtain
$|I_1| \lesssim (|\partial_E^{k}v^{(n)}| + |\partial_E^{k}w^{(n)}|)(|v^{(n)}| + |w^{(n)}|)$.

By the definition of $I_2$, we also have
$|I_2| \lesssim (|\partial_E^{k}v^{(n)}| + |\partial_E^{k}w^{(n)}|)(|v^{(n)}| + |w^{(n)}|)$.

We will prove inductively that for all $k \geq 0$,
\begin{eqnarray}
&&|(R_{2\theta^{(k)}}-id)^{-1}| \leq (2 - 2^{-k}) \varrho^{-1}, \;\  |\partial_{E}^{j}\theta^{(k)}| \leq (2 - 2^{-k})H^{\frac{j}{r}}, \quad j=1,\cdots,r \label{lem 5 term 2}\\
&& |\partial_E^{j}v^{(k)}| \lesssim \varrho^{k} \epsilon H^{\frac{j}{r}}, \quad  j=0,\cdots,r. \label{lem 5 term 3}\\
&& |\partial_E^{j}w^{(k)}| \lesssim \varrho^{k-j-1} \epsilon H^{\frac{j}{r}}, \quad  j=0,\cdots,r. \label{lem 5 term 32}
\end{eqnarray}
It is direct to verify \eqref{lem 5 term 2} and \eqref{lem 5 term 3} for $k = 0$ by the conditions in the lemma.
Given \eqref{lem 5 term 2}, \eqref{lem 5 term 3} for $0 \leq k \leq n$,
it is easy to show by induction that for all $0 \leq k \leq n$, $0 \leq j \leq r$
\begin{eqnarray*}
|\partial_E^{j}((R_{2\theta^{(k)}} - id)^{-1})| \lesssim \varrho^{-j-1}H^{\frac{j}{r}}
\end{eqnarray*}
Combining the above estimate with  \eqref{lem 5 term A2} and \eqref{lem 5 term A1}, we can deduce \eqref{lem 5 term 32}. In particular, we have verified \eqref{lem 5 term 32} for $k=0$.

Now assume that \eqref{lem 5 term 2}, \eqref{lem 5 term 3}, \eqref{lem 5 term 32} are true for $0 \leq k \leq n$. For each $0 \leq j \leq r$, we decompose $\partial_E^{j}(e^{v^{(n+1)}})$ into $I_1$, $I_2$ as described above. By \eqref{lem 5 term 3}, \eqref{lem 5 term 32} for $k=n$, we obtain
$$
|I_1|, \, |I_2| \lesssim \varrho^{2n-r-2} \epsilon^{2} H^{\frac{j}{r}}.
$$
By  $\epsilon \ll \varrho^{r+3}$, we have
$
|\partial_E^{j}v^{(n+1)}| < \varrho^{n+1} \epsilon H^{\frac{j}{r}}.
$
This gives us \eqref{lem 5 term 3} for $k=n+1$.
By \eqref{lem 5 term A2}, \eqref{lem 5 term 3} for $n$, and the fact that $\theta^{(n+1)} - \theta^{(n)} = z^{(n)}$, we see that for sufficiently small $\epsilon$, we have \eqref{lem 5 term 2} for $k=n+1$.
We deduce \eqref{lem 5 term 32} from \eqref{lem 5 term 2}, \eqref{lem 5 term 3}.
This completes the induction.

We define $B = \lim_{n \to \infty} (e^{w^{(n)}})^{-1} \cdots (e^{w^{(1)}})^{-1}(e^{w^{(0)}})^{-1}$.
It is clear that $BAB^{-1} = R_{\theta'}$ where $\theta' = \lim_{n \to \infty} \theta^{(n)}$.
Recall that $|\partial_E^{j}(\theta^{(n)} - \theta^{(n+1)})| \lesssim |\partial_E^{j}v^{(n+1)}|$, then there exists $C_0 = C_0(D) > 0$ such that
$$|B- id| < C_0 \varrho^{-1} \epsilon,\quad |\theta - \theta'| < C_0 \epsilon,$$
and for $j=1$, $\cdots$, $r$, we have
$$|\partial_{E}^{j}B| < C_0 \varrho^{-j-1} \epsilon H^{\frac{j}{r}}, \quad |\partial_{E}^{j}(\theta - \theta')| < C_0 \epsilon H^{\frac{j}{r}}.$$
\qed

We will construct $B$ in Lemma \ref{lemma conjugation} inductively, by following the proof of Proposition 4.1 in \cite{AFK}.
Let $A_{(0)} = \bar{A}$, $\varphi_{(0)} = \bar{\varphi}$, $\xi_{(0)} = \bar{\xi} = R_{\bar{\varphi}}^{-1}\bar{A} -id$.  Let $C_2 > 10, C_3 = C_0(10D)$ as in Lemma \ref{lemma smooth alg conj}, $C_1 \gg  r C_2C_3^{10r}$, $N = \frac{\delta h \varrho^{2r+1}}{C_1 |\bar{\alpha}|}$ and  $h_{j} = e^{-\delta \frac{j}{5N}} h$, $j \geq 0$.
For $1 \leq i \leq N$ we define
\begin{enumerate}
\item $R_{\varphi_{(i)}} = B_{(i-1)}A_{(i-1)}B_{(i-1)}^{-1}$ where $B_{(i-1)}$ is obtained by applying Lemma \ref{lemma smooth alg conj} to $(A_{(i-1)}, \varphi_{(i-1)})$;
\item $A_{(i)}(E,x) = B_{(i-1)}(E,x + \bar{\alpha})B_{(i-1)}(E,x)^{-1}R_{\varphi_{(i)}(E,x)}$;
\item $\xi_{(i)} = R_{\varphi_{(i)}}^{-1}A_{(i)} - id$.
\end{enumerate}
The following estimates are known (Claim 4.3 in \cite{AFK}), thus we omit the proof.
\begin{lemma}\label{lemma known}
If $\epsilon_0$ is sufficiently small, then
\begin{enumerate}
\item $|B_{(i)} - id|_{h_i} < C_3 \varrho^{-1} |\xi_{(i)}|_{h_i}$  for all $0 \leq i \leq N-1$  ;
\item $|R_{\varphi_{(i)}}|_{h_i} < 2D$   for all $0 \leq i \leq N$ ;
\item $|(R_{2\varphi_{(i)}} -id )^{-1}|_{h_i} < 2 \varrho^{-1}$   for all $0 \leq i \leq N$ ;
\item $|\xi_{(i)}|_{h_i} \leq  C_2^{-1}|\xi_{(i-1)}|_{h_{i-1}} \leq C_2^{-i} \epsilon_0 $   for all $1 \leq i \leq N$ .
\end{enumerate}
\end{lemma}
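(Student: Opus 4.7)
The plan is to establish items (1)--(4) simultaneously by induction on $i$, with the arithmetic arranged so that (1) at each level follows from Lemma \ref{lemma smooth alg conj}, (4) follows from a Cauchy-based shrinkage estimate, and (2)--(3) propagate by telescoping. The base case $i=0$ is immediate: (2) and (3) are the hypotheses of Lemma \ref{lemma conjugation} on $\bar\varphi$ at $h_0=h$, and (1) follows by applying Lemma \ref{lemma smooth alg conj} to $(\bar{A},\bar\varphi)$, which is permissible once $\epsilon_0$ is small enough that $\epsilon_0^{1/(r+4)}<\varrho$.

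The engine of the scheme is the estimate in (4). Expanding the definitions,
\begin{equation*}
\xi_{(i)}(x) \;=\; R_{\varphi_{(i)}(x)}^{-1}\bigl(B_{(i-1)}(x+\bar\alpha)B_{(i-1)}(x)^{-1} - id\bigr)R_{\varphi_{(i)}(x)},
\end{equation*}
so by (2) at level $i$ (see below) and the boundedness of $B_{(i-1)}^{-1}$, one gets $|\xi_{(i)}|_{h_i} \lesssim |B_{(i-1)}(x+\bar\alpha) - B_{(i-1)}(x)|_{h_i}$. Since $\bar\alpha$ is real, the mean value theorem combined with a Cauchy estimate on the strip of width $h_{i-1}-h_i$ gives
\begin{equation*}
|B_{(i-1)}(x+\bar\alpha) - B_{(i-1)}(x)|_{h_i} \;\leq\; \frac{|\bar\alpha|}{h_{i-1}-h_i}\,|B_{(i-1)} - id|_{h_{i-1}}.
\end{equation*}
Inserting (1) at level $i-1$, i.e.\ $|B_{(i-1)} - id|_{h_{i-1}} \leq C_3 \varrho^{-1}|\xi_{(i-1)}|_{h_{i-1}}$, and using $h_{i-1}-h_i \gtrsim h\delta/N$ together with $N = \delta h\varrho^{2r+1}/(C_1 |\bar\alpha|)$, the chain simplifies to
\begin{equation*}
|\xi_{(i)}|_{h_i} \;\lesssim\; \frac{C_3 \varrho^{2r}}{C_1}\,|\xi_{(i-1)}|_{h_{i-1}}.
\end{equation*}
The constraint $C_1 \gg r C_2 C_3^{10r}$ forces the prefactor to be $<C_2^{-1}$, yielding (4), and iterating from $i=0$ gives the claimed bound $C_2^{-i}\epsilon_0$.

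To close the induction, (1) at level $i$ is produced by applying Lemma \ref{lemma smooth alg conj} to $(A_{(i)},\varphi_{(i)})$: the required upper bound on $|R_{\varphi_{(i)}}|_{h_i}$ and the non-degeneracy of $R_{2\varphi_{(i)}} - id$ come from (2), (3) at level $i$, while the smallness $|\xi_{(i)}|_{h_i} < \epsilon_1$ comes from (4). Items (2) and (3) then follow from a telescoping argument: the same application of Lemma \ref{lemma smooth alg conj} (with $j=0$) yields $|\varphi_{(j+1)} - \varphi_{(j)}|_{h_j} \leq C_3 |\xi_{(j)}|_{h_j}$, so summing the geometric series gives $|\varphi_{(i)} - \bar\varphi|_{h_i} \lesssim C_3 \epsilon_0$, and for $\epsilon_0$ sufficiently small $R_{\varphi_{(i)}}$ is a small perturbation of $R_{\bar\varphi}$ on the (shrinking but uniformly bounded below) strip $\T_{h_i}$, preserving (2) and (3) with the factor of two of slack. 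The main obstacle is calibrating the scales so that the Cauchy loss $|\bar\alpha|/(h_{i-1}-h_i)$ is dominated by the non-resonance factor $\varrho^{2r+1}$; this balance is exactly what dictates the choice of $N$, and it is also why $C_1$ must be chosen large relative to $C_2 C_3^{10r}$, since higher powers of $C_3$ appear implicitly when one tracks $|B_{(i-1)}^{-1}|$ and absorbs the perturbation of $R_{\varphi_{(i)}}$ against $R_{\bar\varphi}$.
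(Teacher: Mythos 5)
Your proposal correctly reconstructs the argument that the paper itself omits (the paper defers to Claim 4.3 of \cite{AFK}). The key moves are all there and correctly calibrated: the conjugation identity $\xi_{(i)} = R_{\varphi_{(i)}}^{-1}\bigl(B_{(i-1)}(\cdot+\bar\alpha)B_{(i-1)}^{-1} - id\bigr)R_{\varphi_{(i)}}$, the Cauchy estimate on a strip of width $h_{i-1}-h_i \gtrsim h\delta/N$ that trades the analyticity loss against the smallness of $|\bar\alpha|$, and the insertion of (1) at the previous level, all combine with the definition $N = \delta h \varrho^{2r+1}/(C_1|\bar\alpha|)$ to give the contraction factor $C_3\varrho^{2r}/C_1 < C_2^{-1}$, which is exactly what the hierarchy $C_1 \gg rC_2C_3^{10r}$ is built to guarantee. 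The telescoping control of $\varphi_{(i)} - \bar\varphi$ by $\sum_{j<i} C_3|\xi_{(j)}| \lesssim C_3\epsilon_0$, together with $\epsilon_0 \ll \varrho^{20r}$ (from the hypothesis $\varrho^{-1} < \epsilon_0^{-1/(20r)}$), correctly preserves (2) and (3) with the factor-of-two slack.

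One small presentational point: as written, (4) at level $i$ invokes (2) at level $i$ ``see below,'' (1) at level $i$ invokes (2)--(4) at level $i$, and (2), (3) at level $i$ are established by telescoping from (4) at levels $< i$. This is not circular, but a clean write-up should fix the order of implications within a single induction step, e.g.\ (4) at levels $\leq i-1$ $\Rightarrow$ (2), (3) at level $i$; then together with (1) at level $i-1$ and the Cauchy estimate $\Rightarrow$ (4) at level $i$; then Lemma \ref{lemma smooth alg conj} applied to $(A_{(i)}, \varphi_{(i)})$ $\Rightarrow$ (1) at level $i$. One should also note that applying Lemma \ref{lemma smooth alg conj} at level $i$ uses the \emph{local} non-resonance $\min(1/4, |(R_{2\varphi_{(i)}}-id)^{-1}|^{-1}) \geq \varrho/2$, which introduces a harmless factor of $2$ into (1) that is absorbed in the definition of $C_3$; and that the condition $\varrho/2 > |\xi_{(i)}|_{h_i}^{1/(r+4)}$ needed to invoke Lemma \ref{lemma smooth alg conj} holds precisely because $\varrho > \epsilon_0^{1/(20r)}$ and $20r > r+4$.
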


Denote $\epsilon_i = C_2^{-i} \epsilon_0$ for $1 \leq i \leq N$.
We will prove inductively that for $0 \leq i \leq N$,
\begin{equation}\label{lem xx term 1}
1+\max_{1 \leq j \leq r}\left(|\partial_{E}^{j}\varphi_{(i)}|^{\frac r j}_{h_i}, \, |\partial_{E}^{j}\xi_{(i)}|^{\frac r j}_{h_i}\epsilon_i^{-\frac r j}\right) \leq (2 - 2^{-i})H,
\end{equation}
\begin{equation}\label{lem xx term 2}
|\partial^j_{E}(B_{(i)} - id)|_{h_i} \leq C_3\epsilon_i \varrho^{-j-1} H^{\frac{j}{r}}, \quad j=1,\cdots, r.
\end{equation}

For $i=0$, \eqref{lem xx term 1} is given by the definition of $H$ in Lemma \ref{lemma conjugation}.  Apply Lemma \ref{lemma smooth alg conj} to $A_{(0)}$ and $\varphi_{(0)}$, we obtain \eqref{lem xx term 2}. This verifies the induction hypothesis for $i=0$.

Assume that we have \eqref{lem xx term 1}, \eqref{lem xx term 2} for $0 \leq i \leq n-1$.
For each $1\leq k \leq r$, we have
\begin{eqnarray*}
\partial_E^{k}\xi_{(n)} &=& \sum_{1 \leq i_1+i_2 \leq k} \partial_{E}^{i_1}(R_{-\varphi_{(n)}}) \partial_{E}^{k-i_1-i_2}(B_{(n-1)}(\cdot + \bar{\alpha}) B_{(n-1)}^{-1}-id)\partial_{E}^{i_2}(R_{\varphi_{(n)}})\\
 & & + \,  R_{-\varphi_{(n)}} \partial_{E}^{k}(B_{(n-1)}(\cdot + \bar{\alpha}) B_{(n-1)}^{-1}-id)R_{\varphi_{(n)}}.
\end{eqnarray*}
Differentiate both sides of the identity $B_{(n-1)}(\cdot + \bar{\alpha}) B_{(n-1)}^{-1}-id = R_{\varphi_{(n)}} \xi_{(n)}R_{-\varphi_{(n)}}$, we can see
\begin{eqnarray}
|\partial_E^{k}\xi_{(n)}|_{h_{n}} &\leq& \sum_{1 \leq i_1+i_2+j_1+j_2 \leq k} |\partial_{E}^{i_1}(R_{-\varphi_{(n)}})|_{h_n} |\partial_{E}^{j_1}(R_{\varphi_{(n)}})|_{h_n} |\partial_{E}^{k-i_1-i_2-j_1-j_2}\xi_{(n)}|_{h_n} |\partial_{E}^{j_2}(R_{-\varphi_{(n)}})|_{h_n}|\partial_{E}^{i_2}(R_{\varphi_{(n)}})|_{h_n} \nonumber \\
&& + \, |\partial_{E}^{k}(B_{(n-1)}(\cdot + \bar{\alpha}) B_{(n-1)}^{-1}-id)|_{h_n}.  \label{lem xx term 3}
\end{eqnarray}
By the analyticity, we have
\begin{eqnarray*}
|\partial_{E}^{k}(B_{(n-1)}(\cdot + \bar{\alpha}) B_{(n-1)}^{-1}-id)|_{h_n} &\leq& \sum_{j=0}^{k}|\partial_{E}^{j}(B_{(n-1)}^{-1})|_{h_{n}} |\partial_E^{k-j}(B_{(n-1)}(\cdot + \bar{\alpha}) - B_{(n-1)})|_{h_n} \\
&\lesssim& (h_{n-1} - h_{n})^{-1} \bar{\alpha} \sum_{j=0}^{k}|\partial_{E}^{j}(B_{(n-1)}^{-1})|_{h_{n-1}}|\partial_{E}^{k-j}B_{(n-1)}|_{h_{n-1}} \\
&\lesssim& \frac{N\bar{\alpha}}{\delta h} C_3^{k+1} \varrho^{-2k-1} \epsilon_{n-1} H^{\frac{k}{r}}.
\end{eqnarray*}
The last step follows from \eqref{lem xx term 2} and the inductive estimate
\begin{eqnarray*}
|\partial_E^{j}(B^{-1}_{n-1})|_{h_{n-1}} \lesssim C_3^{j}\epsilon_{n-1} \varrho^{-2j} H^{\frac{j}{r}},
\end{eqnarray*}
which we obtain from \eqref{lem xx term 2} by direct computation.
Using \eqref{lem xx term 1} and \eqref{lem xx term 3}, we can see that
\begin{eqnarray*}
|\partial_E^{k}\xi_{(n)}|_{h_n} \lesssim \sum_{j=0}^{k-1} H^{\frac{k-j}{r}} |\partial_{E}^{j}\xi_{(n)}|_{h_n} +  C_3^{k+1}\frac{N\bar{\alpha}}{\delta h} \varrho^{-2k-1} \epsilon_{n-1} H^{\frac{k}{r}}.
\end{eqnarray*}
Recall that $C_1 \gg rC_2C_3^{10r}$ and $N = \frac{\delta h \varrho^{2r+1}}{C_1 |\bar{\alpha}|}$, by simple induction, we can see that
$$
|\partial_E^{k}\xi_{(n)}|_{h_n} \leq \epsilon_{n} H^{\frac{k}{r}}, \quad 0 \leq k \leq r.
$$

By induction hypothesis \eqref{lem xx term 1} for $i=n-1$ and Lemma \ref{lemma known}(1) and (3), we can apply Lemma \ref{lemma smooth alg conj} to $A_{(n-1)}$ and $\varphi_{(n-1)}$, and show that for some $C_4 = C_4(D)$,
\begin{eqnarray*}
|\partial_{E}^{j}(\varphi_{(n)} - \varphi_{n-1})|_{h_{n}} < C_4 \epsilon_{n-1}H^{\frac{j}{r}}.
\end{eqnarray*}
Thus $|\partial_{E}^{j}\varphi_{(n)}|_{h_{n}} \leq [(2-2^{-n})H]^{\frac{j}{r}} $  for all $0 \leq j \leq r$ when $\epsilon_0$ is sufficiently small. This gives us \eqref{lem xx term 1} for $i = n$.
Combining with another application of Lemma \ref{lemma smooth alg conj} to $A_{(n)}$ and $\varphi_{(n)}$, we obtain \eqref{lem xx term 2} for $i=n$.
This completes the induction.

We define $B = B_{(N-1)} \cdots B_{(0)}$, $\tilde{\bar{A}} = A_{(N)}$, $\varphi = \varphi_{(N)}$. Then $R_{\varphi} \tilde{\bar{A}}- id = \xi_{(N)}$.
It is direct to check \eqref{lemma 6 term 101} and \eqref{lemma 6 term 102} using \eqref{lem xx term 2} and Lemma \ref{lemma known}.
Since $\tilde{\bar{A}} = R_{\varphi} + R_{\varphi} \xi_{(N)}$, we have $\mathcal{Q}(\tilde{\bar{A}}) = \mathcal{Q}(R_{\varphi} \xi_{(N)})$.
Then \eqref{lemma 6 term 103} follows from \eqref{lem xx term 1}.\qed

\

From now on, $C_0$ is a generic constant depending only on $D,r$ that varies from line to line.

\noindent
{\bf Proof of Lemma \ref{lemma_derivative} }

The following lemma is Claim 4.4 of \cite{AFK}. We present it here without proof.
\begin{lemma}\label{lemma known 2}
If $C_{5}(D) > 0$ is sufficiently large, and if $\epsilon_0$ is sufficiently small and  $|\bar{\alpha}| < C_5^{-1}\delta h\varrho^2$, then
$$
|(R_{\varphi(x +\alpha) + \varphi(x)}-id)^{-1}|_{e^{-\delta/3}h} \lesssim \varrho^{-1}.
$$
\end{lemma}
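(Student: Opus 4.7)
My plan is a direct Neumann perturbation argument treating $R_{\varphi(x+\bar\alpha)+\varphi(x)} - \id$ as a small perturbation of $R_{2\varphi(x)} - \id$, whose inverse is already controlled by Lemma \ref{lemma known}(3). (I read the $\alpha$ appearing in the statement as $\bar\alpha$, since the smallness hypothesis is stated on $\bar\alpha$ and an arbitrary irrational shift in $\varphi$ could not be made small by any assumption.) The three ingredients are: the bound $|(R_{2\varphi}-\id)^{-1}|_{h_N} < 2\varrho^{-1}$ at the final step $i=N$, which extends to $\T_{e^{-\delta/3}h}$ since $e^{-\delta/3}h < h_N = e^{-\delta/5}h$; a Cauchy estimate for $\partial_x\varphi$ on a slightly shrunken strip; and the hypothesis $|\bar\alpha| < C_5^{-1}\delta h\varrho^{2}$.

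For the Cauchy step, the induction \eqref{lem xx term 1} at $j=0$ combined with $|\bar\varphi - \widehat{\bar\varphi}(0)|_{J_0,h} \leq D$ yields a uniform-in-$N$ bound $|\varphi - \widehat\varphi(0)|_{h_N} \leq 2D$. Since $h_N - e^{-\delta/3}h = h(e^{-\delta/5}-e^{-\delta/3}) \gtrsim \delta h$ for small $\delta$, standard Cauchy estimates give $|\partial_x\varphi|_{e^{-\delta/3}h} \lesssim D/(\delta h)$, and therefore
$$
|\varphi(\cdot+\bar\alpha) - \varphi|_{e^{-\delta/3}h} \leq |\bar\alpha|\,|\partial_x\varphi|_{e^{-\delta/3}h} \lesssim \frac{D|\bar\alpha|}{\delta h} \lesssim \frac{D\varrho^{2}}{C_5}.
$$

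To close, write $R_{\varphi(x+\bar\alpha)+\varphi(x)} - \id = (R_{2\varphi} - \id) + \Delta$ with $\Delta := R_{\varphi(x+\bar\alpha)+\varphi(x)} - R_{2\varphi}$; the elementary identity $R_a - R_b = R_b(R_{a-b}-\id)$ combined with $|R_\tau - \id| \lesssim |\tau|$ on the strip gives $|\Delta|_{e^{-\delta/3}h} \lesssim D\varrho^2/C_5$. Hence $|(R_{2\varphi}-\id)^{-1}\Delta|_{e^{-\delta/3}h} \lesssim D\varrho/C_5 < \tfrac12$ once $C_5 = C_5(D)$ is chosen large, so the Neumann series for $(\id + (R_{2\varphi}-\id)^{-1}\Delta)^{-1}$ converges with norm $\leq 2$, producing
$$
|(R_{\varphi(x+\bar\alpha)+\varphi(x)}-\id)^{-1}|_{e^{-\delta/3}h} \lesssim \varrho^{-1},
$$
as claimed. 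The one subtle point I expect to be the main thing to verify carefully is the uniformity (in $N$) of the sup-norm bound on $\varphi$ feeding Cauchy's estimate: it must not deteriorate like a power of $\varrho^{-1}$, as this would spoil the smallness needed for Neumann convergence. This is exactly what the $j = 0$ case of \eqref{lem xx term 1} is designed to provide, and is the only place where the inductive structure of Lemma \ref{lemma conjugation} is really used; the rest is routine perturbation theory in an annular shrinkage of the strip.
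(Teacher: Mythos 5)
The paper does not actually give a proof of this lemma; it is cited as Claim~4.4 of \cite{AFK} and stated ``without proof.'' So what can be checked is whether your argument is consistent with what the lemma, as it is actually used in the proof of Lemma~\ref{lemma_derivative}, must say.

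The central problem is your reading of $\alpha$. You reinterpret it as $\bar\alpha$, but the usage forces the genuine $\alpha$ from Lemma~\ref{lemma_derivative}: right after the lemma, the bound $|P^{(j)}|_{n+1} < C_0\varrho^{-1}|P^{(j)}_2|_{n+1}$ in \eqref{lem 7 term 104} is obtained by inverting $R_{-\varphi(x)} - R_{\varphi(x+\alpha)}$ in the identity $(R_{-\varphi(x)} - R_{\varphi(x + \alpha)})\mathcal{Q}(\partial_{E}^k\tilde{G}(x)) = P^{(k)}_2(x)$ from \eqref{lem 6 term 1}, and $P^{(j)}_2$ is defined with $R_{\varphi(x+\alpha)}$, with $\alpha$ the free irrational from Lemma~\ref{lemma_derivative} (the base frequency in application), not $\bar\alpha$. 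Since $R_{-\varphi(x)} - R_{\varphi(x+\alpha)} = R_{\varphi(x+\alpha)}\bigl(R_{-\varphi(x)-\varphi(x+\alpha)} - id\bigr)$, the needed inverse estimate is exactly the one in the lemma with the actual $\alpha$-shift. With $\alpha$ of order one, your Cauchy step collapses: $|\varphi(\cdot+\alpha)-\varphi|$ is only $O(D)$, not $O(\varrho^2/C_5)$, and the Neumann series does not close. Your own observation --- that ``an arbitrary irrational shift in $\varphi$ could not be made small by any assumption'' --- correctly identifies the tension, but the resolution is not to reread the variable.

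The missing idea is the commutation relation $\tilde{G}(x+\bar\alpha)\tilde{\bar{A}}(x) = \tilde{\bar{A}}(x+\alpha)\tilde{G}(x)$, inherited from $G(\theta+\bar\alpha)\bar{A}(\theta) = \bar{A}(\theta+\alpha)G(\theta)$. Writing $\tilde{\bar{A}} = R_{\varphi}(id+\xi_{(N)})$ with $\xi_{(N)}$ exponentially small and $\tilde{G} = R_{\tilde\zeta}(id+\tilde\xi)$ with $|\tilde\xi|$ controlled by the hypothesis on $\xi$, one gets $R_{\varphi(x+\alpha)} \approx R_{\tilde\zeta(x+\bar\alpha)}R_{\varphi(x)}R_{-\tilde\zeta(x)} = R_{\varphi(x)+\tilde\zeta(x+\bar\alpha)-\tilde\zeta(x)}$, i.e.\ the $\alpha$-increment of $\varphi$ is governed, up to small errors, by the $\bar\alpha$-increment of $\tilde\zeta$. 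It is \emph{that} increment to which the Cauchy estimate on a shrunken strip and the hypothesis $|\bar\alpha| < C_5^{-1}\delta h\varrho^2$ are applied. Your use of Lemma~\ref{lemma known}(3) for $|(R_{2\varphi}-id)^{-1}|\lesssim\varrho^{-1}$ and your final Neumann argument are both fine; they simply need to be fed this smallness input, which your proof does not supply.
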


Following \cite{AFK}, we define $\tilde{\zeta}$, $\tilde{\xi}$ by letting $R_{\tilde{\zeta}} = \frac{\tilde{G} - \mathcal{Q}(\tilde{G})}{det(\tilde{G} - \mathcal{Q}(\tilde{G}))^{\frac{1}{2}}}$ and $\tilde{\xi} =R_{-\tilde{\zeta}}\tilde{G} -id$. Here $\cal{Q}$ is defined in Lemma \ref{lemma conjugation}.
Then
\begin{equation}\label{lem 6 term 2}
(\tilde{G} - \mathcal{Q}(\tilde{G}))\tilde{\xi} = (\det(\tilde{G} - \mathcal{Q}(\tilde{G}))^{\frac{1}{2}} - 1)\tilde{G} + \mathcal{Q}(\tilde{G}).
\end{equation}
After differentiating \eqref{lem 6 term 2}, we obtain that, for each $k \geq 1$,
\begin{equation}
\sum_{j=0}^{k} \partial_{E}^{k-j}(\tilde{G} - \mathcal{Q}(\tilde{G}))\, \partial_E^{j}\tilde{\xi} = \sum_{j=0}^{k}\partial_E^{j}(\det(\tilde{G} - \mathcal{Q}(\tilde{G}))^{\frac12}-1) \, \partial_E^{k-j} \tilde{G} + \partial_E^{k}\mathcal{Q}(\tilde{G}). \label{lem 6 term 3}
\end{equation}
Then, by a direct computation, we have, for any $k \geq 0$,
\begin{eqnarray}
|\partial_{E}^k \mathcal{Q}(\tilde{G})|, \; |\partial_{E}^k(\tilde{G} - \mathcal{Q}(\tilde{G}))|&\lesssim& |\partial_E^{k}\tilde{G}|,  \label{lem 6 term 4} \\
|\partial_{E}^k(\det(\tilde{G} - \mathcal{Q}(\tilde{G}))-1)|
&\lesssim& \sum_{j=0}^{k}  | \partial_{E}^{k- j} \tilde{G}| |\partial_E^{j} \mathcal{Q}(\tilde{G})|,  \label{lem 6 term 5}
\end{eqnarray}
where $|\cdot| = |\cdot|_{e^{-\frac{\delta}{2}}h}$.
By choosing $\epsilon_0$ small, we can assume that $\tilde{G} - \cal{Q}(\tilde{G})$ is invertible and $\frac{1}{2} <|\det(\tilde{G} - \cal{Q}(\tilde{G}))| < 2$. Thus
\begin{eqnarray}
|\partial_{E}^k(\det(\tilde{G} - \mathcal{Q}(\tilde{G}))^{\frac{1}{2}}-1)|
&\lesssim& \sum_l \sum_{i_j \geq 1\atop{i_1 + \cdots + i_l = k}} \prod_{j=1}^{l} |\partial_E^{i_j}\det(\tilde{G} - \mathcal{Q}(\tilde{G}))|  \nonumber \\
&\lesssim& \sum_l \sum_{i_j \geq 1, 0 \leq p_j \leq i_j \atop{i_1 + \cdots + i_l = k} }\prod_{j=1}^{l} |\partial_E^{i_j - p_j} \mathcal{Q}(\tilde{G})| | \partial_E^{p_j} \tilde{G}|.  \label{lem 6 term 6}
\end{eqnarray}
Combining with (\ref{lem 6 term 3})--(\ref{lem 6 term 6}), we obtain
\begin{eqnarray}
|\partial_{E}^{k}\tilde{\xi}| &\lesssim& \sum_{i=0}^{k-1} |\partial_E^{i}\tilde{\xi}||\partial_E^{k-i}\tilde{G}|+ |\partial_E^{k}\tilde{G}||\tilde{G}||\mathcal{Q}(\tilde{G})| + |\partial_E^{k}\cal{Q}(\tilde{G})|\nonumber \\
&& + \sum_{i=1}^{k} |\partial_{E}^{k-i}\tilde{G}| \sum_{j=1}^{i} \sum_{ 0 \leq p_m \leq j_m  \atop{\sum_{m=1}^{l} j_m = j\atop{j_m \geq 1}}} \prod_{m=1}^{l}|\partial_E^{j_m - p_m}\mathcal{Q}(\tilde{G})||\partial_E^{p_m}\tilde{G}|.\label{lem 6 term 8}
\end{eqnarray}

We have the equality $R_{-\zeta}B(\cdot + \alpha)R_{\zeta}(id + \xi) B^{-1} = R_{\tilde{\zeta}-\zeta}(id + \tilde{\xi})$.
Thus
\begin{eqnarray*}
(R_{\tilde{\zeta} - \zeta} - id)(id + \tilde{\xi}) = R_{-\zeta}(B(\cdot + \alpha) - id) R_{\zeta}(id + \xi)(id - B) B^{-1}\\  + (id + \xi)(id - B) B^{-1} + R_{-\zeta}(B(\cdot + \alpha)-id)R_{\zeta}(id + \xi).
\end{eqnarray*}
After differentiating both side, we obtain for each  $k \geq 0$   that
\begin{eqnarray}
|\partial_E^k(\tilde{\zeta} - \zeta)|
 &\lesssim& \sum_{i=1}^{k} |\partial_E^{k-i}(\tilde{\zeta} - \zeta)||\partial_E^{i}\tilde{\xi}| \nonumber \\
&&+ \, \sum_{i_1 + \cdots + i_6 = k\atop{i_j \geq 0}} |\partial_E^{i_1}R_{-\zeta}| |\partial_E^{i_3}R_{\zeta}| |\partial_E^{i_4}(id + \xi)| |\partial_E^{i_2}(B-id)| |\partial_E^{i_5}(B-id)| |\partial_E^{i_6}(B^{-1})|.  \nonumber \\
&& + \, \sum_{i_1 + i_2 + i_3 = k\atop{i_j \geq 0}} |\partial_E^{i_1}(id + \xi)| |\partial_E^{i_2}(B-id)|  |\partial_E^{i_3}(B^{-1})|  \nonumber \\
&& +  \, \sum_{i_1 + i_2+i_3 + i_4 = k\atop{i_j \geq 0}} |\partial_E^{i_1}R_{-\zeta}| |\partial_E^{i_2}(B-id)| |\partial_E^{i_3}R_{\zeta}| |\partial_E^{i_4}(id + \xi)| .  \label{lem 6 term 10}
\end{eqnarray}

We need a good upper bound for $|\partial_{E}^k \mathcal{Q}(\tilde{G})|$.
Note that for each $\theta \in \C$, any matrix $M$, we have
\begin{eqnarray*}
R_{-\theta}M = \mathcal{Q}(M R_{\theta}), \quad R_{\theta}\mathcal{Q}(M) = \mathcal{Q}(R_{\theta}M).
\end{eqnarray*}
By the commutation relation between $G$ and $\bar{A}$, we obtain $\tilde{G}(x + \bar{\alpha}) \tilde{\bar{A}}(x) = \tilde{\bar{A}}(x + \alpha) \tilde{G}(x)$. After taking the derivatives, we obtain
by the commutation relation, that for each $k \geq 1$
\begin{eqnarray*}
\sum_{i=0}^{k} \partial_{E}^{k-i}\tilde{G}(x + \bar{\alpha}) \partial_{E}^{i} \tilde{\bar{A}}(x)  = \sum_{i=0}^{k} \partial_{E}^{k-i}\tilde{\bar{A}}(x + \alpha) \partial_{E}^{i} \tilde{G}(x),
\end{eqnarray*}
and
\begin{eqnarray}
&& (R_{-\varphi(x)} - R_{\varphi(x + \alpha)})  \mathcal{Q}\left(\partial_{E}^k \tilde{G}(x)\right)  \label{lem 6 term 1} \\
&=&\mathcal{Q}\left(\partial_{E}^k \tilde{G}(x) R_{\varphi(x)} - R_{\varphi(x+\alpha)} \partial_{E}^k \tilde{G}(x)\right) \nonumber  \\
&=& \mathcal{Q}\left((\partial_{E}^k \tilde{G}(x) - \partial_E^k \tilde{G}(x + \bar{\alpha}))R_{\varphi(x)}\right)
+ \mathcal{Q}\left((\tilde{\bar{A}}(x + \alpha) - R_{\varphi(x + \alpha)}) \partial_E^k \tilde{G}(x)\right) \nonumber  \\
&& + \mathcal{Q}\left( \partial_E^k \tilde{G}(x +\bar{\alpha})(R_{\varphi(x)} - \tilde{\bar{A}}(x))\right) - \sum_{i=1}^{k} \mathcal{Q}\left((\partial_{E}^{k-i}\tilde{G}(x + \bar{\alpha}) - \partial_E^{k-i}R_{\tilde{\zeta}(x + \bar{\alpha})}) \partial_E^{i}\tilde{\bar{A}}(x)\right) \nonumber \\
&& - \sum_{i=1}^{k} \mathcal{Q}\left( \partial_E^{k-i}R_{\tilde{\zeta}(x + \bar{\alpha})} \partial_E^{i}\tilde{\bar{A}}(x)\right)
+ \sum_{i=1}^{k} \mathcal{Q}\left(\partial_E^{i}\tilde{\bar{A}}(x + \alpha) (\partial_{E}^{k-i}\tilde{G}(x ) - \partial_E^{k-i}R_{\tilde{\zeta}(x)}) \right) \nonumber  \\
&& + \sum_{i=1}^{k} \mathcal{Q}\left( \partial_E^{i}\tilde{\bar{A}}(x + \alpha) \partial_E^{k-i}R_{\tilde{\zeta}(x)} \right). \nonumber
\end{eqnarray}

 To simplify notations, let $C_1 \gg r$, $N = \frac{\delta h \varrho^{2r+1}}{C_1 |\bar{\alpha}|}$, $g_n =  e^{-\frac{\delta}{2} - \frac{n\delta}{2(r+1)N}}h$, and
  let $|\cdot|_{n} := |\cdot|_{g_n}$ in the following computations.
 For $j=0,\cdots,r$, we denote $P^{(j)}(x):= \mathcal{Q}\left(\partial^{j}_{E}\tilde{G}(x)\right)$ and
 $$P^{(j)}_1(x) := P^{(j)}(x + \bar{\alpha}) - P^{(j)}(x),\quad P^{(j)}_2(x) := \mathcal{Q}\left(\partial^{j}_{E}\tilde{G}(x)R_{\varphi(x)} - R_{\varphi(x+\alpha)} \partial^{j}_{E}\tilde{G}(x)\right).$$
 By hypothesis in Lemma \ref{lemma conjugation}, Lemma \ref{lemma_derivative} and \eqref{lemma 6 term 101}, \eqref{lemma 6 term 103} (Note that the conditions and conclusions of Lemma \ref{lemma conjugation} holds for Lemma \ref{lemma_derivative} ),  we see that
\begin{eqnarray}
|\partial_E^{j}\tilde{G}|,\,  |\partial_E^j \tilde{\bar{A}}| &\lesssim& H^{\frac{j}{r}}, \quad \forall  j=0,\cdots, r,  \label{lem 7 term 100} \\
|\partial_E^{j}\mathcal{Q}(\tilde{\bar{A}})| &<& C_0 \epsilon_0 e^{-\frac{\delta h \varrho^{2r+1}}{C_0 \|\bar{\alpha}\|}} H^{\frac{j}{r}}, \quad \forall j=1,\cdots, r. \label{lem 7 term 101}
\end{eqnarray}

Assume that for $0 \leq k \leq r$, we have
\begin{equation}\label{lem 7 term 103}
|\partial_{E}^{j}\tilde{\xi}|_{jN}<  C_0 e^{-\frac{\delta h \varrho^{2r+1}}{C_0|\bar{\alpha}|}}H^{\frac{j}{r}},
\quad |\partial_{E}^{j}(\tilde{\zeta} - \zeta)|_{jN}
<C_0H^{\frac{j}{r}} \epsilon_0, \quad 0 \leq j \leq k-1.
\end{equation}
Notice that this hypothesis is empty for $k=0$.
For $kN \leq n \leq (k+1)N-1$,
by Lemma \ref{lemma known 2} and the analyticity, we obtain
\begin{eqnarray} \label{lem 7 term 104}
|P^{(j)}|_{n+1}< C_0 \varrho^{-1} |P^{(j)}_2|_{n+1},\quad |P^{(j)}_1|_{n+1} < \frac{C_0|\bar{\alpha}|}{g_{n} - g_{n+1}}|P^{(j)}|_{n}.
\end{eqnarray}
By \eqref{lem 6 term 1}--\eqref{lem 7 term 103}, we have
\begin{eqnarray*}
|P^{(k)}_2|_{n+1} &\lesssim& |P^{(k)}_1|_{n+1} + |\partial_{E}^k\tilde{G}|_{n+1} |R_{\varphi} - \tilde{\bar{A}}|_{n+1} \\
&& + \sum_{i=1}^{k}\left( |\partial_E^{k-i}(\tilde{G} - R_{\tilde{\zeta}})|_{n+1} |\partial_E^{i}\tilde{\bar{A}}|_{n+1} +  |\partial_E^{k-i}\tilde{\zeta}|_{n+1} |\partial_E^{i}\mathcal{Q}(\tilde{\bar{A}})|_{n+1} \right)  \\
&\lesssim& |P^{(k)}_1|_{n+1} + C_0 e^{-\frac{\delta h \varrho^{2r+1}}{C_0|\bar{\alpha}|}}H^{\frac{k}{r}}.
\end{eqnarray*}
Then by \eqref{lem 7 term 104}, we obtain $|P^{(k)}|_{n+1} \lesssim \frac{C_0 (r+1)N|\bar{\alpha}|}{\delta h}\varrho^{-1}|P^{(k)}|_n + C_0 e^{-\frac{\delta h \varrho^{2r+1}}{C_0|\bar{\alpha}|}}H^{\frac{k}{r}}$.
Using \eqref{lem 7 term 100}, \eqref{lem 6 term 4} and the facts that $C_1 \gg r$,  $N = \frac{\delta h \varrho^{2r+1}}{C_1 |\bar{\alpha}|}$, we get
\begin{equation}\label{partial_3_Q}
|\mathcal{Q}(\partial_{E}^k\tilde{G})|_{(k+1)N} = |P^{(k)}|_{(k+1)N}  < C_0 e^{-\frac{\delta h \varrho^{2r+1}}{C_0|\bar{\alpha}|}}H^{\frac{k}{r}}.
\end{equation}
By \eqref{lem 6 term 8} and \eqref{lem 6 term 10}, combining with \eqref{lemma 6 term 101}, \eqref{lem 7 term 100}, \eqref{lem 6 term 4}, \eqref{partial_3_Q}, we have
$$|\partial_{E}^{k}\tilde{\xi}|_{(k+1)N}<  C_0 e^{-\frac{\delta h \varrho^{2r+1}}{C_0|\bar{\alpha}|}}H^{\frac{k}{r}},
\quad |\partial_{E}^{k}(\tilde{\zeta} - \zeta)|_{(k+1)N}
<C_0H^{\frac{k}{r}} \epsilon_0.$$
This completes the induction and thus completes the proof.
\qed

\

\

{\footnotesize \noindent Zhiyuan Zhang\\
Institut de Math\'{e}matique de Jussieu---Paris Rive Gauche, B\^{a}timent Sophie Germain, Bureau 652\\
75205 PARIS CEDEX 13, FRANCE\\
Email address: zzzhangzhiyuan@gmail.com

\

\noindent Zhiyan Zhao\\
Universit\'{e} de Nice-Sophia Antipolis(ANR contracted), Laboratoire J.A. Dieudonn\'{e}, Bureau 4.819\\
06108 NICE CEDEX 02, FRANCE\\
Email address: zyqiao1985@gmail.com\\
\thanks{Research of Z.Zhao was supported by ANR grant
``ANR-14-CE34-0002-01'' for the project "Dynamics and CR geometry".}

}

\end{document}